\newcommand{\Balpha}{\mbox{$\hspace{0.1em}\rule[0.01em]{0.05em}{0.39em}\hspace{-0.21em}\alpha$}}
\numberwithin{equation}{section}
\theoremstyle{plain}
\newtheorem*{rep@theorem}{\rep@title}
\newcommand{\newreptheorem}[2]{%
\newenvironment{rep#1}[1]{%
 \def\rep@title{#2 \ref{##1}}%
 \begin{rep@theorem}}%
 {\end{rep@theorem}}}
\newtheorem{theorem}[equation]{Theorem}
\newtheorem{proposition}[equation]{Proposition}
\newtheorem{lemma}[equation]{Lemma}
\newtheorem{corollary}[equation]{Corollary}
\newtheorem{claim}[equation]{Claim}
\theoremstyle{remark}
\newtheorem{remark}[equation]{Remark}
\theoremstyle{definition}
\newtheorem{definition}[equation]{Definition}
\newtheorem{convention}[equation]{Convention}
\newtheorem*{question*}{Question}
\newcommand{\K}{{\mathcal K}}
\newcommand{\R}{\mathbb R}
\newcommand{\diam}{\operatorname{diam}}
\newcommand{\al}{\alpha}
\newcommand{\be}{\beta}
\newcommand{\D}{\partial}
\newcommand{\de}{\delta}
\newcommand{\Lap}{\Delta}
\newcommand{\eps}{\varepsilon}
\newcommand{\la}{\lambda}
\newcommand{\ol}{\overline}
\newcommand{\ra}{\rightarrow}
\newcommand{\spt}{\operatorname{spt}}
\providecommand{\abs}[1]{\lvert #1\rvert}
\def\XXint#1#2#3{{\setbox0=\hbox{$#1{#2#3}{\int}$}
     \vcenter{\hbox{$#2#3$}}\kern-.5\wd0}}
\begin{document}

\title[Mean curvature flow with surgery]{Mean curvature flow with surgery}
\author{Robert Haslhofer \and Bruce Kleiner}

\date{\today \thanks{B.K. was supported by NSF grant DMS-1105656.}}
\maketitle

\begin{abstract}
We give a new proof for the existence of mean curvature flow with surgery of $2$-convex hypersurfaces in $\R^N$, as announced in \cite{HK}.
Our proof works for all $N\geq 3$, including mean convex surfaces in $\mathbb{R}^3$.
We also derive a priori estimates for a more general class of flows in a local and flexible setting.
\end{abstract}

\tableofcontents

\section{Introduction}

\subsection{Background}
Mean curvature flow, like Ricci flow and harmonic map heat flow, develops singularities, and understanding them is a central problem in geometric analysis.
In the last decades several methods have been developed to contiunue the flow beyond the first singular time:
Brakke flow \cite{Brakke}, level set flow \cite{evans-spruck,CGG}, and mean curvature flow with surgery \cite{huisken-sinestrari3}; these have different advantages and are to some extent complementary.
Mean curvature flow with surgery gives more refined control on the topology of the evolving manifold, and can also be used to give smooth approximations to weak solutions \cite{Head,Lauer};
moreover, it has the nice feature that the entire discussion takes place in the framework of smooth differential geometry.

Roughly speaking, the idea of surgery is to continue the flow through singularities
by cutting along necks, gluing in caps, and continuing the flow of the pieces; components of known geometry and topology are discarded.
Huisken and Sinestrari successfully implemented this idea for the mean curvature flow of 2-convex hypersurfaces in $\R^N$ ($N\geq 4$),
i.e. for hypersurfaces where the sum of the smallest two principal curvatures is positive.
As in the construction of 3d Ricci flow with surgery by Perelman \cite{perelman_entropy,perelman_surgery} (see also \cite{Hamilton_pic,KL,MT,CaoZhu,BBBMP}),
this required considerable technical virtuosity, and was the culmination of a series of long papers \cite{huisken-sinestrari1,huisken-sinestrari2,huisken-sinestrari3}.

\subsection{Overview}
The purpose of the present paper is to give a new proof for the existence of mean curvature flow with surgery for $2$-convex hypersurfaces in $\R^N$ ($N\geq 3$), as announced in \cite{HK}.
The main focus is on shortness, brevity and simplicity.
Moreover, our proof works for mean convex surfaces in $\R^3$, which was left as an open problem after \cite{huisken-sinestrari3}, and recently solved also by Brendle-Huisken \cite{BH}, see Remark \ref{remark_BH}.
Furthermore, anticipating future applications, we derive our a priori estimates in a completely local and very flexible setting.

Our new approach to surgery is motivated by our paper \cite{HK}, where we gave a streamlined treatment of White's theory of mean convex level set (Brakke) flow \cite{white_size,white_nature,white_subsequent}, 
based on the beautiful noncollapsing result of Andrews \cite{andrews1}:
Given any $\alpha>0$, the condition that each boundary point of a mean convex domain admits interior and exterior balls of radius $\alpha/H(p)$ is preserved under the flow.

We formulate the existence theory for a class of flows that we call $(\Balpha,\delta,\mathbb{H})$-flows.
The parameters $\Balpha=(\alpha,\beta,\gamma)$ measure certain geometric quantities of the initial domain ($\alpha$-Andrews, $\lambda_1+\lambda_2\geq \beta H$, $H\leq\gamma$),
the parameter $\delta$ measures the quality of the surgery-necks, and the three curvature-scales $\mathbb{H}=(H_{\textrm{trig}},H_{\textrm{neck}},H_{\textrm{th}})$ are used to specify
more precisely when and how surgeries are performed, see Definition \ref{def_MCF_surgery}. 

Our main existence result, Theorem \ref{thm_main_existence}, proves the existence of an $(\Balpha,\delta,\mathbb{H})$-flow for any $2$-convex initial domain with parameters $\Balpha$.
It is complemented by a canonical neighborhood theorem, Theorem \ref{thm_can_nbd}, which gives a precise geometric description of the regions of high curvature.
Other geometric, topological and analytic conclusions follow immediately, see Corollary \ref{cor_discarded}, Corollary \ref{cor_topo}, and  Proposition \ref{cor_levelset}. 

The key for our existence proof are new a priori estimates for mean curvature flow with surgery. We formulate our a priori estimates for a more general class of flows that we call $(\alpha,\delta)$-flows.
They are concatenations of smooth $\alpha$-Andrews flows, where at finitely many times: (1) some surgeries are performed on $\delta$-necks of comparable scales; and/or (2) some connected components are discarded, see Definiton \ref{def_alphadelta}.

Our three main a priori estimates for $(\alpha,\delta)$-flows are a local curvature estimate (Theorem \ref{loccurvest}), a convexity estimate (Theorem \ref{thm-intro_convexity_estimate}),
and a global curvature estimate (Theorem \ref{thm_glob_curv_est}).
These estimates generalize to $(\alpha,\delta)$-flows our main estimates for $\alpha$-Andrews flows from \cite{HK}.
The presence of surgeries makes the proof of the local curvature estimate significantly more delicate and requires many new ideas, see Section \ref{sec_apriori}.
In the proof of the convexity estimate and the global curvature estimate, we then follow closely the scheme from our previous paper \cite{HK},
highlighting the new steps necessitated by surgeries.

Broadly speaking, once the a-priori estimates are established, the canonical neighborhood theorem follows from a short argument by contradiction, see Section \ref{subsec_cannbd}, and the main existence result follows from a relatively short continuity argument, see Section \ref{subsec_existence}
(strictly speaking, we also need certain structural results for ancient solutions and the evolution of standard caps, but their proofs are easy; see Section \ref{sec_ancientstandard}).

Finally, we point out that the notion of $(\al,\de)$-flows is considerably more flexible than the one of $(\Balpha,\delta,\mathbb{H})$-flows.
To a certain extent this additional flexibility is needed for locality, see Remark \ref{remark_whim}.
It also makes the class of flows larger, and thus the a priori estimates stronger.

\subsection{A priori estimates} We will now discuss our a priori estimates for $(\al,\de)$-flows. We start by recalling the simpler notion of $\al$-Andrews flows, which was the framework for our estimates in \cite{HK}.

\begin{definition}[{c.f. \cite{andrews1} and \cite[Def. 1.1]{HK}}]\label{def_alpha_andrews}
Let $\alpha>0$. A \emph{smooth $\alpha$-Andrews flow} $\{K_t\subseteq U\}_{t\in I}$ in an open set $U\subseteq \R^N$ over a time interval $I\subseteq\R$ is a smooth family of mean convex domains moving by mean curvature flow,
such that for every $p\in\D K_t$ the two closed balls $\bar{B}_{\textrm{int}}$ and $\bar{B}_{\textrm{ext}}$ that are tangent at $p$ and have radius $\frac{\alpha}{H(p)}$
satisfy $\bar{B}_{\textrm{int}}\cap U\subseteq K_t$ and $\bar{B}_{\textrm{ext}}\cap U\subseteq U\setminus\textrm{Int}(K_t)$, respectively.
\end{definition}

\begin{convention}
We now fix a factor $\mu\in [1,\infty)$, quantifying the notion of surgeries at comparable scales, for the class of all flows under consideration. Note however, that  we do \emph{not} fix any absolute scale.
\end{convention}

\begin{definition}\label{def_alphadelta}
An \emph{$(\alpha,\delta)$-flow} $\K$ is a collection of finitely many smooth $\al$-Andrews flows $\{K_t^i\subseteq U\}_{t\in[t_{i-1},t_{i}]}$ ($i=1,\ldots,k$; $t_0<\ldots< t_k$) in an open set $U\subseteq \R^N$ (see Definition \ref{def_alpha_andrews}),
such that:
\begin{enumerate}
\item for each $i=1,\ldots,k-1$, the final time slices of some collection of disjoint strong $\delta$-necks\footnote{More precisely, we only require that their final time-slices are disjoint in $U$.}
 are replaced by pairs of standard caps as described in Definition \ref{def_surgery},
 giving a domain $K^\sharp_{t_{i}}\subseteq K^{i}_{t_{i}}=:K^-_{t_{i}}$.
\item the initial time slice of the next flow, $K^{i+1}_{t_{i}}=:K^+_{t_{i}}$, is obtained from $K^\sharp_{t_{i}}$ by discarding some connected components.
\item there exists $s_\sharp=s_\sharp(\K)>0$, which depends on $\K$, such that all necks in item (1) have radius $s\in[\mu^{-1/2}s_\sharp,\mu^{1/2} s_\sharp]$.
\end{enumerate}
\end{definition}

\begin{remark} To avoid confusion, we emphasize that the word `some' allows for the empty set,
i.e. some of the inclusions $K_{t_i}^+\subseteq K_{t_i}^\sharp\subseteq K_{t_i}^-$ could actually be equalities. In other words, there can be some times $t_i$ where effectively only one of the steps (1) or (2) is carried out.
Also, the flow can become extinct, i.e. we allow the possibility that $K^{i+1}_{t_{i}}=\emptyset$.
\end{remark}

\begin{remark}\label{remark_whim}
At first sight, the possibility that some connected components can be discarded whenever someone wants might seem arbitrary, but in fact this flexibility is crucial for obtaining local estimates.
For example, imagine the situation that $\{L_t\subset \R^N\}_{t\in I}$ is a mean curvature flow with surgery in $\R^N$ (technically an $(\Balpha,\de,\mathbb{H})$-flow), and that we want to prove local estimates in an open set $U\subset \R^N$, i.e. estimates for the $(\al,\de)$-flow $\{K_t=L_t\cap U\subseteq U\}_{t\in I'}$, on some interval $I'\subseteq I$. It can of course happen that there is a surgery in $L_t$ on a neck outside $U$ that disconnects $L_t$ into two connected components. According to the standard procedure, the component of high curvature, which has known geometry and topology, is then discarded.
Viewed from $U$ one cannot see the neck and thus one observes a component being discarded seemingly on a whim.
\end{remark}

We will now state our three main a priori estimates for $(\al,\de)$-flows. We say that an $(\alpha,\de)$-flow is defined in a parabolic ball $P(p,t,r)=B(p,r)\times (t-r^2,t]$
 if it is defined in an open set $U\supseteq B(p,r)$ and for all $t'\in(t-r^2,t]$. Our first main estimate gives curvature control on a whole parabolic
ball, from a mean curvature bound at a single point.

\begin{theorem}[Local curvature estimate]\label{loccurvest}
There exist $\bar{\delta}=\bar{\delta}(\al)>0$, $\rho=\rho(\al)>0$ and $C_\ell=C_\ell(\alpha)<\infty$ ($\ell=0,1,2,\ldots$) with the following property.
If $\K$ is an $(\al,\delta)$-flow ($\delta\leq\bar{\de}$) in a parabolic ball $P(p,t,r)$ centered at a boundary point 
$p\in \D K_t$ with $H(p,t)\leq r^{-1}$, then
\begin{equation}\label{eqn_loccurv}
 \sup_{P(p,t,\rho r)\cap \partial \K}\abs{\nabla^\ell A}\leq C_\ell r^{-1-\ell}.
\end{equation}
\end{theorem}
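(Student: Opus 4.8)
The plan is to run the contradiction-and-compactness argument of \cite{HK} for smooth $\al$-Andrews flows, while dealing with the surgeries that now appear in the blow-up --- this last point being the crux. By parabolic rescaling I may assume $r=1$ and $(p,t)=(0,0)$, so $H(0,0)\le 1$. Since the interior and exterior ball conditions of Definition \ref{def_alpha_andrews} force $|\lambda_i|\le H/\al$ for every principal curvature, any $(\al,\de)$-flow satisfies $|A|\le C(\al)H$ pointwise; in particular $|A|(0,0)\le C(\al)$, so what must be shown is that curvature control at the single point $(0,0)$ propagates to all of $P(0,0,\rho)$. I would prove the case $\ell=0$ by contradiction and deduce the bounds for $\ell\ge 1$, as in \cite{HK}, from the interior estimates for smooth mean curvature flow on a slightly smaller parabolic ball --- using the $\ell=0$ bound together with Definition \ref{def_alphadelta} to see that any surgery meeting this region takes place on a strong $\de$-neck (and the standard caps replacing it) of scale comparable to or larger than $1$, near which the flow is smooth with all derivatives of $A$ controlled at the corresponding scale.

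\emph{Point selection and rescaling.} Assume the $\ell=0$ estimate fails for every constant. Instantiating the failure with $\bar{\de}=1/j$, I obtain $(\al,\de_j)$-flows $\K_j$ in $P(0,0,1)$ with $\de_j\to 0$, $0\in\D K^j_0$, $H(0,0)\le 1$, but $\sup_{P(0,0,\rho)\cap\D\K_j}|A|\to\infty$. A parabolic point-selection argument --- picking a point $q_j$ and a scale $r_j$ at which the natural curvature bound first fails and dilating parabolically by the reciprocal factor $Q_j\to\infty$ --- produces flows $\tilde\K_j$ that are again $(\al,\de_j)$-flows (the $\al$-Andrews condition and the quality of a strong $\de$-neck are scale invariant), with $|A|=1$ at a spacetime origin, $|A|$ bounded on a parabolic ball of radius $R_j\to\infty$, and, by the choice of $q_j$ and $r_j$, retaining the hypothesis $H(p,t)\le r^{-1}$ in the form of a point where $H\to 0$ after rescaling. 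In $\tilde\K_j$ the surgery necks have radius in $[\mu^{-1/2}\tilde s_j,\mu^{1/2}\tilde s_j]$ with $\tilde s_j=Q_j\,s_\sharp(\K_j)$.

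\emph{Controlling surgeries --- the main obstacle.} This is the step that makes the argument genuinely harder than in \cite{HK}. The key observation is that a strong $\de$-neck of radius $s$, and the standard caps replacing it, carry curvature of order $s^{-1}$; hence a surgery meeting the region $\{|A|\lesssim 1\}$ of $\tilde\K_j$ forces $s\gtrsim 1$, and by the scale comparability in Definition \ref{def_alphadelta}(3) this gives $\tilde s_j\gtrsim 1$. Consequently, on any fixed parabolic ball, either $\tilde s_j\to\infty$ along a subsequence --- in which case that ball is eventually surgery-free, and by tracing the relevant connected component backward through the surgery times one checks it is never among the discarded ones --- or $\tilde s_j$ stays bounded, the surgeries persist at a definite scale, and one obtains in the limit an $(\al,0)$-flow, i.e. a smooth mean curvature flow surgered along \emph{exact} round necks of a fixed radius; the latter is incompatible with the curvature bound and with ancientness, since just before such a surgery the relevant region is an arbitrarily long round shrinking cylinder. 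So the first alternative holds on every fixed parabolic ball, and by the compactness theorem for smooth $\al$-Andrews flows from \cite{HK} a subsequence of the $\tilde\K_j$ converges in $C^\infty_{\loc}$ to a smooth \emph{ancient} $\al$-Andrews flow $\K_\infty$, of bounded curvature, with $|A|(0,0)=1$.

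\emph{Conclusion.} Finally I would contradict $|A|(0,0)=1$ exactly as for $\al$-Andrews flows in \cite{HK}: $\K_\infty$ inherits --- from the retained point where $H\to 0$, or equivalently from the backward behavior of the extremal curvature together with $|A|\le C(\al)H$ --- a spacetime point at which $H$ vanishes; since $H\ge 0$ satisfies a heat-type equation, the strong maximum principle forces $H\equiv 0$, so $\K_\infty$ is static, and a static $\al$-Andrews domain is a halfspace, whence $|A|\equiv 0$. This contradiction establishes the $\ell=0$ bound, and with it, as noted above, the theorem for all $\ell$.
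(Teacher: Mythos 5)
Your proposal takes a genuinely different route from the paper's proof, but it has two serious gaps, both concentrated exactly at the point that makes this theorem harder than the corresponding result for smooth $\al$-Andrews flows.

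The paper does \emph{not} run a point-selection/compactness argument. It works at the original scale with $H(0,0)\le j^{-1}$ and splits into three cases according to the scale of nearby surgeries. Cases 1 and 2 (no nearby surgeries; surgeries at macroscopic scale) are dispatched by citing the estimate for smooth flows from \cite{HK} and by pseudolocality (Proposition~\ref{lemma_pseudo}) respectively. The crux is Case 3 -- surgeries at \emph{microscopic} scale near the basepoint -- and it is handled by a Huisken-density argument: the density based at a surgery point is $\ge\Theta_{N-1}>1$ at small backward time; at backward time $1$ it is close to $1$ by the halfspace convergence of Claim~\ref{claim_halfspace} together with one-sided minimization (Proposition~\ref{lemma_onesidedmin}); and the delicate Claim~\ref{claim_monotonicity} shows the cumulative error in Huisken's monotonicity due to the surgeries tends to zero. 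Nothing resembling this appears in your argument.

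The first gap is in your "main obstacle" dichotomy. You assert that if the rescaled surgery scales $\tilde s_j$ stay bounded, the limit is a flow "surgered along exact round necks of a fixed radius," and that this is "incompatible with the curvature bound and with ancientness." It is not. As $\de_j\to 0$ the pre-surgery slice limits to an exact infinite shrinking cylinder, and after the surgery one has standard caps evolving as standard solutions; a shrinking cylinder is ancient and has curvature bounded going backward in time, so the whole configuration has $|A|$ bounded on every backward parabolic ball. Indeed, this is precisely alternative (b) of the canonical neighborhood theorem (Theorem~\ref{thm_can_nbd}), so it cannot be ruled out by such soft considerations; ruling it out is exactly what requires the quantitative density/monotonicity machinery.

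The second gap is in the conclusion. You claim the rescaled limit $\K_\infty$ "inherits" a spacetime point where $H$ vanishes from the original hypothesis $H(0,0)\le r^{-1}$. But after dilating by $Q_j\to\infty$ around the selected point $q_j$, the original basepoint lies at distance of order $Q_j\to\infty$ and does not survive to the limit. Without it, the constraints $|A|(0,0)=1$ and $|A|\le 2$ do not yield any contradiction: a static or shrinking cylinder of the appropriate radius satisfies both. The alternative you offer -- "backward behavior of the extremal curvature together with $|A|\le C(\al)H$" -- is not an argument. The paper's Remark~\ref{alternative_proof} shows how to recover the nonsurgical estimate after a point selection: one uses Huisken density based at the selected point and argues that at backward time $1$ (the original scale) the density is close to $1$ via halfspace convergence and one-sided minimization. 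This is the information that must replace your vanished low-$H$ point, and in the surgery setting one then again needs Claim~\ref{claim_monotonicity} to control the error terms. These pieces are not optional shortcuts around the paper's approach -- they appear to be forced.
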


Second, we have the convexity estimate for $(\al,\de)$-flows.

\begin{theorem}[Convexity estimate]\label{thm-intro_convexity_estimate}
For all $\eps>0$, there exist $\bar{\delta}=\bar{\delta}(\alpha)>0$ and $\eta=\eta(\eps,\al)<\infty$ with the following property.
If $\K$ is an $(\al,\delta)$-flow ($\delta\leq\bar{\delta}$) defined in a parabolic ball  $P(p,t,\eta\, r)$ centered at a boundary point 
$p\in \D K_t$ with $H(p,t)\leq r^{-1}$, then
\begin{equation}
\la_1(p,t)\geq -\eps r^{-1}.
\end{equation}  
\end{theorem}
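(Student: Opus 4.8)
The plan is to argue by contradiction via the blow-up/compactness scheme used for $\al$-Andrews flows in \cite{HK}: the local curvature estimate (Theorem \ref{loccurvest}) supplies compactness, the structure of surgeries guarantees that no surgery region obstructs the blow-up, and a strong-maximum-principle rigidity argument on the limit produces the contradiction.

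\emph{Setup and point selection.} Suppose the claim fails for some fixed $\eps>0$. Then there are $\de_j\downarrow 0$, $\eta_j\uparrow\infty$ and $(\al,\de_j)$-flows $\K_j$ defined in $P(p_j,t_j,\eta_j r_j)$ with $p_j\in\D K_{t_j}$, $H(p_j,t_j)\le r_j^{-1}$, but $\la_1(p_j,t_j)<-\eps r_j^{-1}$; after parabolic rescaling I take $r_j=1$. Since $|A|(p_j,t_j)$ is a priori uncontrolled, I would first run a Hamilton-type point-selection argument (as in \cite{HK}) for the scale-invariant quantity $\la_1/H$ over $P(p_j,t_j,\eta_j)$, producing points $(\bar p_j,\bar t_j)$ and scales $Q_j:=H(\bar p_j,\bar t_j)$ with $\la_1(\bar p_j,\bar t_j)\le -\eps Q_j$, and such that on a backward parabolic neighborhood $P(\bar p_j,\bar t_j,A_jQ_j^{-1})$ with $A_j\uparrow\infty$ one has $H\le 2Q_j$ and $\la_1\ge -2\eps H$.

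\emph{Removing the surgeries and taking the limit.} This is where the structure of $(\al,\de)$-flows is used. Strong $\de$-necks are $C(N)\de$-close to round cylinders and the standard caps glued in are convex, so once $\bar\de=\bar\de(\eps)$ is small, every point of an $(\al,\de_j)$-flow that lies in a surgically modified region satisfies $\la_1\ge -\eps H$; hence $(\bar p_j,\bar t_j)$, and likewise the near-extremal points of $\la_1/H$, must lie in smoothly evolved regions. Combining $H\le 2Q_j$ on $P(\bar p_j,\bar t_j,A_jQ_j^{-1})$ with the neck-scale normalization (item (3) of Definition \ref{def_alphadelta}), any surgery neck meeting that neighborhood has radius $\gtrsim Q_j^{-1}$, so after rescaling by $Q_j$ one can either arrange, along a subsequence, that the rescaled surgery scales leave every compact set, or simply localize the maximum-principle argument below to the smooth time-slab ending at the origin. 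Applying Theorem \ref{loccurvest} at unit scale around each point of the rescaled neighborhoods, and bootstrapping to all derivatives, the rescaled flows subconverge smoothly to an ancient $\al$-Andrews flow $\K_\infty$ on $\R^N\times(-\infty,0]$ with $H>0$, $H(0,0)=1$, $\la_1(0,0)\le -\eps H(0,0)$, and $\la_1\ge -2\eps H$ everywhere; thus $c_0:=\inf_{\K_\infty}\la_1/H\in[-2\eps,-\eps]$, attained at an interior spacetime point (after a further blow-up if the infimum is only approached at spatial infinity).

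\emph{Rigidity and conclusion.} At the extremal point $\la_1/H$ attains an interior minimum $c_0<0$. By Huisken's evolution equations $\D_t h^i_j=\Delta h^i_j+|A|^2 h^i_j$ and $\D_t H=\Delta H+|A|^2 H$, the function $w=\la_1-c_0 H\ge 0$ satisfies $\D_t w\ge\Delta w+|A|^2 w$ in the viscosity sense, so the strong maximum principle gives $w\equiv 0$, i.e.\ $\la_1\equiv c_0 H$, throughout the past; equivalently the positive semidefinite tensor $S=A-c_0 Hg$, which solves the linear equation $\D_t S=\Delta S+|A|^2 S$, has nontrivial kernel everywhere. Hamilton's strong maximum principle for tensors then forces the kernel to be parallel and invariant, so $\K_\infty$ splits off the $\la_1$-eigendirection, and tracing this splitting through --- exactly as in the surgery-free case of \cite{HK} --- contradicts $c_0<0$ (using $H>0$). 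The one genuinely new point compared with \cite{HK} is the middle step: verifying that the near-extremal points of $\la_1/H$ never lie in a surgery region and that, after rescaling, the surgery scales can be pushed out of every compact set; this is the main obstacle, while the compactness (Theorem \ref{loccurvest}) and the rigidity argument are as before.
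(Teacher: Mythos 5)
Your high-level scheme (blow-up, compactness from Theorem~\ref{loccurvest}, strong maximum principle on the limit) matches the paper's, and you correctly flag the handling of surgeries as ``the main obstacle.'' But the way you resolve that obstacle has a genuine gap, and in fact misses the paper's one essential new ingredient.

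You propose to handle nearby surgeries by observing that surgery necks/caps are nearly convex (so near-extremal points of $\lambda_1/H$ lie in smoothly evolved regions) and then claiming that ``after rescaling by $Q_j$ one can either arrange, along a subsequence, that the rescaled surgery scales leave every compact set, or simply localize the maximum-principle argument to the smooth time-slab ending at the origin.'' Neither alternative is available in general. The surgery scales are only bounded \emph{below} by a constant times $Q_j^{-1}$; nothing forces them to go to infinity after rescaling, so they may stay at size $O(1)$. When that happens, surgeries can accumulate (in rescaled spacetime) at the selected point $(0,0)$, and then there is \emph{no} smooth backward time-slab of positive duration ending at the origin on which to run the strong maximum principle. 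Knowing that $(0,0)$ itself is not a surgically modified point does not give backward smoothness; you need a smooth parabolic neighborhood, and that is exactly what is at stake.

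The paper's resolution is item~(3) of Definition~\ref{def_surgery}: the surgery is constructed (via Hamilton's trick of slightly bending the cylinder inward, see Proposition~\ref{lemma_glue_caps}) so that for any post-surgery boundary point $p_\sharp$ with $\lambda_1(p_\sharp)<0$ there is a pre-surgery point $p_-$ at controlled distance with $\frac{\lambda_1}{H}(p_-)\leq\frac{\lambda_1}{H}(p_\sharp)$. This lets one relocate the near-extremal point from $\partial K_0^\sharp$ to the pre-surgery boundary $\partial K_0^-$, which \emph{does} have a smooth backward parabolic neighborhood of definite (rescaled) size by the strong-$\delta$-neck condition and Proposition~\ref{lemma_sepsurg}, so the maximum-principle/splitting argument of \cite{HK} applies verbatim. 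Your proposal never invokes item~(3) and has no substitute for it. Relatedly, the paper's normalization is via the clean ``let $\eps_0$ be the infimum'' device, which forces $H(0,0)\to 1$ for free and makes the limit attain the minimum of $\lambda_1/H$ at the origin; your Hamilton-type point selection is not spelled out and, as written ($\lambda_1\geq -2\eps H$ on the backward neighborhood), does not by itself produce an interior spacetime minimum. Finally, note your $\bar\delta=\bar\delta(\eps)$ is weaker than the stated $\bar\delta=\bar\delta(\alpha)$; the paper's use of item~(3) rather than near-convexity of caps is precisely what avoids tying $\bar\delta$ to $\eps$.
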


Theorem \ref{thm-intro_convexity_estimate} says that a boundary point $(p,t)$ in an $(\al,\de)$-flow
has almost positive definite second fundamental form as long as the flow has had a chance to evolve over a portion of space-time which is large
compared with the scale given by $H(p,t)$. In particular, it implies that ancient $(\al,\de)$-flows in $\R^N$ have nonnegative second fundamental form.

Third, we have the global curvature estimate for $(\al,\de)$-flows.

\begin{theorem}[Global curvature estimate]\label{thm_glob_curv_est}
For all $\Lambda<\infty$,
there exist $\bar{\delta}=\bar{\delta}(\alpha)>0$, $\eta=\eta(\alpha,\Lambda)<\infty$ and $C_\ell=C_\ell(\alpha,\Lambda)<\infty$ ($\ell=0,1,2,\ldots$) with the following property.
If $\K$ is an $(\al,\delta)$-flow ($\delta\leq\bar{\delta}$) in a parabolic ball  $P(p,t,\eta\, r)$ centered at a boundary point 
$p\in \D K_t$ with $H(p,t)\leq r^{-1}$, then
\begin{equation}\label{eqn_globcurv}
 \sup_{P(p,t,\Lambda r)\cap\partial \K'}\abs{\nabla^\ell A}\leq C_\ell r^{-1-\ell},
\end{equation}
where $\K'$ denotes the $(\al,\de)$-flow in $P(p,t,\Lambda r)$ that is obtained from $\K$ via restricting to $B(p,\Lambda r)$ and discarding the connected components that do not contain $p$.
\end{theorem}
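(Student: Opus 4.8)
The plan is to argue by contradiction after parabolic rescaling, reducing first to the $\ell=0$ case. For the reduction, note it suffices to produce $C_0=C_0(\al,\Lambda)<\infty$ (and suitable $\bar\de,\eta$) so that $H\le C_0r^{-1}$ on $P(p,t,(\Lambda+1)r)\cap\partial\K''$, where $\K''$ is the analogue of $\K'$ with $B(p,\Lambda r)$ replaced by $B(p,(\Lambda+1)r)$: for then at each boundary point $(q,s)\in P(p,t,\Lambda r)\cap\partial\K'$ one has $P(q,s,C_0^{-1}r)\subseteq P(p,t,(\Lambda+1)r)$, and near $(q,s)$ at scale $C_0^{-1}r$ the configuration coincides with a genuine $(\al,\de)$-flow with curvature $\le C_0r^{-1}$, so Theorem~\ref{loccurvest} applied at $(q,s)$ gives $\abs{\nabla^\ell A}\le C_\ell(\al)(C_0r^{-1})^{1+\ell}$ near $(q,s)$, and covering yields \eqref{eqn_globcurv}. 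So assume $r=1$ and that the $\ell=0$ claim fails: there are $\de_j\to0$, $(\al,\de_j)$-flows $\K_j$ in $P(p_j,t_j,\eta)$ with $H(p_j,t_j)\le1$, and $(q_j,s_j)\in P(p_j,t_j,\Lambda+1)\cap\partial\K_j'$ with $H(q_j,s_j)\to\infty$. Applying Theorem~\ref{loccurvest} at $(p_j,t_j)$ first gives $H\le C_0(\al)$ on $P(p_j,t_j,\rho)\cap\partial\K_j'$, so the high-curvature points stay a definite parabolic distance $\ge\rho$ from $(p_j,t_j)$.

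Next I would run the standard curvature-doubling point-selection to obtain $(\bar q_j,\bar s_j)\in P(p_j,t_j,2(\Lambda+1))\cap\partial\K_j'$ with $Q_j:=H(\bar q_j,\bar s_j)\to\infty$ and $H\le 2Q_j$ on $P(\bar q_j,\bar s_j,A_jQ_j^{-1})\cap\partial\K_j'$ for some $A_j\to\infty$; choosing $\eta=\eta(\al,\Lambda)$ large enough keeps these parabolic balls inside the domain. The iteration terminates because each individual $\K_j$ is smooth away from finitely many surgery times and has locally bounded curvature. The surgeries require care: the surgery necks of $\K_j$ all have a single radius $s_\sharp(\K_j)$ up to the factor $\mu$, so after rescaling by $Q_j$ they sit at scale comparable to $s_\sharp Q_j$; passing to a subsequence $s_\sharp Q_j\to\sigma\in[0,\infty]$, and one treats $\sigma=0$ and $\sigma=\infty$ (surgeries escape to scale $0$ or $\infty$ and are invisible in the limit) and $\sigma\in(0,\infty)$ (a standard-cap region may persist in the limit) separately. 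In all cases Theorem~\ref{loccurvest}, being stated for $(\al,\de)$-flows, absorbs the surgeries and gives uniform bounds on $\abs{\nabla^\ell A}$ at scale $Q_j^{-1}$ near $(\bar q_j,\bar s_j)$.

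I would then parabolically rescale $\K_j$ about $(\bar q_j,\bar s_j)$ by $Q_j$ and pass to a smooth limit $\K_\infty$ on $\R^N\times(-\infty,\tau_\infty]$ with $\tau_\infty\ge0$, using $\eta\gg\Lambda$ so the rescaled time-intervals exhaust the past, the derivative bounds above with Arzel\`a--Ascoli, and the scale-invariance and closedness under limits of Andrews' noncollapsing. By Theorem~\ref{thm-intro_convexity_estimate} the limit is weakly convex, $A\ge0$, and it is non-flat since $H(0,0)=1$; so $\K_\infty$ is a non-flat, weakly convex, $\al$-noncollapsed ancient flow, possibly containing a standard-cap region. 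Now invoke the structural results for ancient solutions and for the evolution of standard caps (Section~\ref{sec_ancientstandard}). If $\K_\infty$ is compact, then for large $j$ the connected component of $\bar q_j$ in $\partial\K_j'$ — which by the discarding in the definition of $\K_j'$ is an ancestor of $p_j$ — is $\eps$-close at scale $Q_j^{-1}$ to the compact $\partial K_{\infty,0}$, hence a convex blob of diameter $O(Q_j^{-1})$ lying $O(Q_j^{-1})$-close to $\bar q_j$ and becoming extinct in time $O(Q_j^{-2})$; since $p_j$ is at distance $\ge\rho/2\gg Q_j^{-1}$ from $\bar q_j$ and the blob is separated from $(p_j,t_j)$ by parabolic distance $\ge\rho$, it cannot actually be an ancestor of $p_j$ — contradiction. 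If $\K_\infty$ is noncompact, the classification forces a cylindrical or bowl-type end (or a neck joined to a standard cap); so near $\bar q_j$ the ancestor component contains an arbitrarily long, thin neck of radius $\sim Q_j^{-1}$ which must open up into the region near $p_j$, where $H\le C_0(\al)$. Thus the neck has unrescaled length $\lesssim\Lambda$ but rescaled length $\to\infty$ in every unbounded direction, which cannot be reconciled with the completeness of $\K_\infty$ and the containment $\partial\K_j'\subseteq B(p_j,\Lambda+1)$ — again a contradiction. This proves the $\ell=0$ estimate, and hence the theorem.

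I expect the main obstacle to be the point-selection in the presence of surgeries — in particular ruling out that the rescaled limit is polluted by a surgery at a pathological scale, which is precisely where one leans on the fact that Theorem~\ref{loccurvest} already holds for $(\al,\de)$-flows — together with the noncompact case of the final step, where a purely soft argument does not suffice and one genuinely needs the structural description of ancient $\al$-noncollapsed flows and of standard caps.
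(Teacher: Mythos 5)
Your proposal follows a genuinely different route from the paper. The paper's proof is a transplant of the argument from \cite[Thm.~1.12]{HK} (Hausdorff convergence to convex time slices, then regularity of the limiting boundary, culminating in a sphere-comparison bound for the mean curvature of the truncated domains $X^j_{R,t}$ that directly contradicts \eqref{eqn_contr_ass}), with three modifications to accommodate surgeries. You instead run a Perelman-style point selection and blow up at the point of maximal (locally doubled) curvature, aiming to classify the blowup limit. This is a legitimate alternative strategy in principle, and your reduction to $\ell=0$, your handling of the surgery scale in the point selection, and your compact-limit case (a $Q_j^{-1}$-size blob at distance $\geq\rho/2$ from $p_j$ cannot be the ancestor of the $\rho$-size region around $p_j$, by nesting) are sound.

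However, the noncompact case contains a genuine gap, for two independent reasons. First, you invoke ``the classification forces a cylindrical or bowl-type end,'' but the structural results of Section~\ref{sec_ancient} (Propositions~\ref{structure_compact}, \ref{lem_quantitative_one_ended}) are proved only for $\beta$-uniformly $2$-convex ancient $\al$-Andrews flows, whereas Theorem~\ref{thm_glob_curv_est} is stated for general $(\al,\de)$-flows with no $2$-convexity hypothesis. Without $2$-convexity there is no such classification available, and the blowup limit need only be a convex ancient $\al$-Andrews flow. Second, and more fundamentally, even granting the cylindrical-end structure, your claimed contradiction does not close: you assert that the neck having unrescaled length $\lesssim\Lambda$ while rescaled length tends to $\infty$ is ``irreconcilable with completeness of $\K_\infty$ and the containment in $B(p_j,\Lambda+1)$,'' but this is precisely what happens generically in a blowup — the rescaled flows live in $B(0, C\Lambda Q_j)$ which exhausts $\R^N$, so a noncompact, complete blowup limit is entirely consistent. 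A thin neck of radius $\sim Q_j^{-1}$ and actual length $\lesssim\Lambda$ inside $B(p_j,\Lambda+1)$, possibly opening up towards $p_j$, violates neither the Andrews condition nor connectivity, and therefore yields no contradiction. This is exactly the scenario the sphere-comparison step in \cite[Proof of Thm.~1.12, Step~7]{HK} (Modification~3 in the paper's proof) is designed to rule out, by exploiting that the flow has existed for a very long prior time interval and contains the controlled-curvature region around $p_j$ for all of that time; your argument discards this information once the point selection moves the basepoint to $(\bar q_j,\bar s_j)$.
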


\begin{remark} Theorem \ref{thm_glob_curv_est} quickly implies (and immediately follows from) a global convergence result, Corollary \ref{thm_global_convergence}.
\end{remark}

\begin{remark}
Only normalizing the curvature at the basepoint one cannot get curvature control for other connected components.
An interesting feature, that shows up later in the proof of the canonical neighborhood theorem for $(\Balpha,\de,\mathbb{H})$-flows (Section \ref{subsec_cannbd}), is that the Andrews condition in combination with the degeneration of the surgery parameters provides a mechanism to clear out other components.
\end{remark}

\begin{remark}\label{remark_generic}
Our a priori estimates are local and thus also apply if mean convexity and the Andrews condition only hold locally around a singularity.
This seems to be very useful if one wants to implement surgery for generic mean curvature flow, c.f. \cite{CM_generic,CIM,CM_uniqueness}.
\end{remark}

\subsection{Existence theory}

We now turn to the discussion of the existence theory.
Our construction only depends on a few parameters of the initial domain, which is always assumed to be $2$-convex.

\begin{definition}[Controlled initial condition]\label{def_initialdata}
Let $\Balpha=(\al,\beta,\gamma)\in(0,N-2)\times (0,\tfrac{1}{N-2})\times (0,\infty)$.
A smooth compact domain $K_0\subset \R^N$ is called an \emph{$\Balpha$-controlled initial condition},
if it satisfies the $\alpha$-Andrews condition and the inequalities $\lambda_1+\lambda_2\geq \beta H$ and $H\leq \gamma$.
\end{definition}

\begin{remark}\label{remark_controlled}
Note that every smooth compact $2$-convex domain is a controlled initial condition for some parameters $\alpha,\beta,\gamma>0$.
\end{remark}

For us, a mean curvature flow with surgery is a $(\al,\de)$-flow with $(\al,\beta,\gamma)$-controlled initial data, subject to the following additional conditions.
First, the flow is \emph{$\beta$-uniformly $2$-convex}, i.e. $\lambda_1+\lambda_2\geq \beta H$.
Besides the neck-quality $\delta>0$, we have three curvature-scales $H_{\textrm{trig}}> H_{\textrm{neck}} > H_{\textrm{th}} > 1$, to which we refer as the trigger-, neck- and thick-curvature. 
The surgeries are done at times $t$ when the maximum of the mean curvature hits $H_{\textrm{trig}}$.
They are performed on a minimal disjoint collection of solid $\de$-necks of curvature $H_{\textrm{neck}}$ that separate the trigger part $\{H=H_{\textrm{trig}}\}$ from the thick part $\{H\leq H_{\textrm{th}}\}$ in $K_t^-$, and the high curvature components are discarded.
Finally, we impose the condition that surgeries are done more and more precisely, if the surgery-necks happen to be rounder and rounder.
We call our flows with surgery $(\Balpha,\de,\mathbb{H})$-flows, and the precise definition is as follows.

\begin{definition}\label{def_MCF_surgery}
An \emph{$(\Balpha,\de,\mathbb{H})$-flow}, $\mathbb{H}=(H_{\textrm{th}},H_{\textrm{neck}},H_{\textrm{trig}})$, is an $(\al,\de)$-flow $\{K_t\subset \R^N\}_{t\geq 0}$ (see Definition \ref{def_alphadelta}) with $\lambda_1+\lambda_2\geq \beta H$, and
with $\Balpha=(\al,\beta,\gamma)$-controlled initial condition $K_0\subset \R^N$ (see Definition \ref{def_initialdata}) such that
\begin{enumerate}
\item $H\leq H_{\textrm{trig}}$ everywhere, and
surgery and/or discarding occurs precisely at times $t$ when $H=H_{\textrm{trig}}$ somewhere.
\item The collection of necks in item (1) of Definition \ref{def_alphadelta} is a minimal collection of solid $\de$-necks of curvature $H_{\textrm{neck}}$ which
separate the set $\{H=H_{\textrm{trig}}\}$ from $\{H\leq H_{\textrm{th}}\}$ in the domain
$K_t^-$.
\item $K_t^+$ is obtained from $K_t^\sharp$ by discarding precisely those connected components with $H>H_{\textrm{th}}$ everywhere.
In particular, of each pair of facing surgery caps precisely one is discarded. 
\item If a strong $\delta$-neck from item (2) also is a strong $\hat{\delta}$-neck for some $\hat{\delta}<\delta$, then property (4) of Definition \ref{def_surgery} also holds with $\hat{\delta}$ instead of $\delta$.
\end{enumerate}
\end{definition}

\begin{remark}
To avoid confusion, we emphasize again that the collection of necks in item (2) can be empty. Indeed, the simplest example of an \emph{$(\Balpha,\de,\mathbb{H})$-flow} is the evolution of a round ball $K_0=\bar{B}^N$, which just shrinks self-similarly until $H$ reaches $H_{\textrm{trig}}$, and then is discarded.
\end{remark}

\begin{remark}\label{remark_extinct}
By comparison every $(\Balpha,\de,\mathbb{H})$-flow becomes extinct after some finite time $T$, i.e. satisfies $K_t=\emptyset$ for all $t>T$.
\end{remark}

\begin{remark}
We point out that our definition of  $(\Balpha,\de,\mathbb{H})$-flows does not include a canonical neighborhood property. Instead, we will prove that the canonical neighborhood property is consequence of the global convergence result (Corollary \ref{thm_global_convergence}), see Theorem \ref{thm_can_nbd}.
\end{remark}

Our main existence theorem is the following.

\begin{theorem}[Existence of mean curvature flow with surgery]\label{thm_main_existence}
There are constants $\ol{\de}=\ol{\de}(\Balpha)>0$ and $\Theta(\de)=\Theta(\Balpha,\de)<\infty$ ($\delta\leq\bar{\de}$) with the following significance.
If $\de\leq\bar{\de}$ and $\mathbb{H}=(H_{\textrm{trig}},H_{\textrm{neck}},H_{\textrm{th}})$ are positive numbers with
${H_{\textrm{trig}}}/{H_{\textrm{neck}}},{H_{\textrm{neck}}}/{H_{\textrm{th}}},H_{\textrm{neck}}\geq \Theta(\de)$,
then there exists an $(\Balpha,\de,\mathbb{H})$-flow $\{K_t\}_{t\in[0,\infty)}$ for every $\Balpha$-controlled initial condition $K_0$.  
\end{theorem}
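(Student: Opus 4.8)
The strategy is to build the flow one step at a time — flowing smoothly by mean curvature and performing a surgery each time $\max H$ reaches $H_{\textrm{trig}}$ — and to show that only finitely many surgeries ever occur, so that the procedure continues indefinitely (the flow being empty from some finite time on, by Remark \ref{remark_extinct}). I would phrase this as an open--closed argument: let $I\subseteq[0,\infty)$ be the set of $T$ for which an $(\Balpha,\de,\mathbb{H})$-flow on $[0,T]$ exists with initial condition $K_0$; then $0\in I$, $I$ is an interval, and the point is to prove $I=[0,\infty)$.

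\emph{Choice of parameters.} Take $\bar\de=\bar\de(\Balpha)$ smaller than the thresholds in the a priori estimates (Theorems \ref{loccurvest}, \ref{thm-intro_convexity_estimate}, \ref{thm_glob_curv_est}) and in the global convergence and canonical neighborhood results (Corollary \ref{thm_global_convergence}, Theorem \ref{thm_can_nbd}), and small enough that replacing a strong $\de$-neck by standard caps as in Definition \ref{def_surgery} preserves the $\al$-Andrews condition and the inequality $\la_1+\la_2\ge\be H$ — here one uses the structural properties of the standard cap from Section \ref{sec_ancientstandard} (it is convex, hence has exterior balls of every radius, and is $2$-convex with a definite margin) together with a gluing estimate. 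Let $H_{\mathrm{can}}=H_{\mathrm{can}}(\al)$ denote the curvature scale above which Theorem \ref{thm_can_nbd} provides canonical neighborhoods. Then choose $\Theta(\de)=\Theta(\Balpha,\de)$ large enough that: (i) $H_{\textrm{neck}}$ is far above $H_{\mathrm{can}}$ and large enough that canonical neighborhoods at scale $H_{\textrm{neck}}^{-1}$ are $\hat\de$-close to the round cylinder for every $\hat\de\le\de$ (this handles item (4) of Definition \ref{def_MCF_surgery}); (ii) $H_{\textrm{trig}}/H_{\textrm{neck}}\ge C_1(\al)$, where $C_1(\al)$ bounds $H$ on a freshly glued standard cap of scale $H_{\textrm{neck}}^{-1}$, so that $H<H_{\textrm{trig}}$ on the new caps; (iii) $H_{\textrm{neck}}/H_{\textrm{th}}$ is large enough for the neck detection below; and (iv) $H_{\textrm{trig}}\ge\ga$, so that $K_0$ already satisfies $H<H_{\textrm{trig}}$.

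\emph{Extension step.} Suppose the flow has been built on $[0,t_{i-1}]$. Starting from $K^+_{t_{i-1}}$, run smooth mean curvature flow: the $\al$-Andrews condition is preserved by Andrews' theorem \cite{andrews1}, $\la_1+\la_2\ge\be H$ is preserved by the maximum principle, whence $\abs{A}\le C(N)H$, so the smooth flow persists as long as $H<H_{\textrm{trig}}$ and $\max H$ reaches $H_{\textrm{trig}}$ at some finite time $t_i$ (or the flow becomes extinct smoothly, in which case we are done). The flow on $[0,t_i]$ is an $(\Balpha,\de,\mathbb{H})$-flow, so Theorem \ref{thm_can_nbd} applies and every boundary point with $H\ge H_{\mathrm{can}}$ has a canonical neighborhood — a neck region, a cap region, or a convex component of controlled geometry. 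Since $H_{\textrm{neck}}$ is far above both $H_{\mathrm{can}}$ and $H_{\textrm{th}}$ and far below $H_{\textrm{trig}}$, the canonical neighborhood structure forces the region where $H_{\textrm{th}}<H<H_{\textrm{trig}}$ to be tube-like (necks and caps), and from this one extracts a disjoint family of strong $\de$-necks of curvature $H_{\textrm{neck}}$ (thicken suitable cross-sectional $(N-2)$-spheres of $\{H=H_{\textrm{neck}}\}$ lying in neck regions, pass to a maximal disjoint subfamily, then to a minimal one) separating $\{H=H_{\textrm{trig}}\}$ from $\{H\le H_{\textrm{th}}\}$ in $K^-_{t_i}:=K_{t_i}$, as required in Definition \ref{def_MCF_surgery}(2). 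Performing the surgery on it and discarding the components with $H>H_{\textrm{th}}$ everywhere gives $K^+_{t_i}$, which by the parameter choices is again $\al$-Andrews, satisfies $\la_1+\la_2\ge\be H$ and $H\le H_{\textrm{trig}}$, so $t_i\in I$.

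\emph{Non-accumulation, and the main difficulty.} The crux is a bound on the total number of surgeries that is independent of how far the construction has progressed. First, replacing a strong $\de$-neck of curvature $H_{\textrm{neck}}$ by standard caps removes volume $\ge c(N)H_{\textrm{neck}}^{-N}$; since $\vol(K_t)$ is nonincreasing under smooth flow, surgery and discarding, and bounded by $\vol(K_0)$, the total number $\mathcal N$ of necks ever surgered is at most $c(N)^{-1}\vol(K_0)H_{\textrm{neck}}^{N}$. Second, the number of connected components of $K_t$ is nonincreasing away from surgery times (no pinching occurs while $H<H_{\textrm{trig}}$, and the exterior balls of the Andrews condition prevent merging) and, at a surgery time, increases by at most the number of necks cut there, whereas a surgery time at which no neck is cut must discard at least one component and so strictly decreases the count; since the total increase of the component count is $\le\mathcal N$, the number of such discard-only surgery times is at most $\#\mathrm{components}(K_0)+\mathcal N$. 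Hence at most $M=M(\Balpha,\mathbb{H},K_0)$ surgeries occur on $[0,T)$, uniformly in $T$. This gives closedness of $I$ (along $T_j\uparrow T$ there are at most $M$ surgeries altogether, and past the last one the smooth flow extends to the closed interval since $H<H_{\textrm{trig}}$ and $\abs{A}\le C(N)H$ there), and together with openness we conclude $I=[0,\infty)$. I expect the genuinely delicate point to be the neck detection step — extracting from Theorem \ref{thm_can_nbd} an honest disjoint family of \emph{strong} $\de$-necks at the prescribed scale $H_{\textrm{neck}}^{-1}$ that separates the trigger set from the thick set, compatibly with items (2)--(4) of Definition \ref{def_MCF_surgery} — together with verifying that surgery preserves the $\al$-Andrews condition, which, being nonlocal, forces the standard cap to satisfy the Andrews inequality with a definite margin.
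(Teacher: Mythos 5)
Your proposal takes a genuinely different route from the paper. The paper argues by contradiction: assuming $\Theta(\delta)$ could not be chosen, one gets a sequence $\K^j$ of $(\Balpha,\delta,\mathbb{H}_j)$-flows on maximal finite intervals $[0,T_j]$ with the $\mathbb{H}_j$-ratios tending to infinity, and then the self-improvement theorem (Theorem \ref{prop_selfimprove}) together with Claims \ref{claim_sepclaim} and \ref{claim_sepnecks} produces a valid separating collection of strong $\delta$-necks in $K^j_{T_j}$ for $j$ large, contradicting maximality. You propose instead a direct open--closed construction at fixed parameters, and close it by counting: the volume drop per surgery bounds the number $\mathcal N$ of cut necks by a multiple of $\vol(K_0)H_{\textrm{neck}}^N$, and a component count bounds the discard-only surgery times by $\#\mathrm{comp}(K_0)+\mathcal N$. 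That counting argument is correct and supplies an explicit non-accumulation bound which the paper's contradiction argument sidesteps; it is a legitimate alternative ingredient, and the high-level ideas (use the a priori estimates and the canonical neighborhood theorem at each trigger time) are aligned with the paper. Still, two points are genuine gaps, not just delicate.

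First, your parameter choice (i) misreads item (4) of Definition \ref{def_MCF_surgery}. That item constrains the \emph{surgery procedure}: whenever a surgery neck happens also to be a strong $\hat\delta$-neck for some $\hat\delta<\delta$, the gluing must achieve the $\delta'(\hat\delta)$-precision of part (4) of Definition \ref{def_surgery}. It does not demand that the canonical-neighborhood models at scale $H_{\textrm{neck}}^{-1}$ be $\hat\delta$-close to a cylinder for all $\hat\delta\le\delta$ — which in any case cannot follow from a single choice of $\Theta(\delta)$, since in Theorem \ref{thm_can_nbd} the threshold $\Theta_\eps(\delta)$ must grow as the desired closeness $\eps$ shrinks. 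Item (4) is met by constructing an adaptive gluing, which is the content of Proposition \ref{lemma_glue_caps}.

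Second, the neck-detection step in your extension argument is a real gap. The canonical neighborhood theorem tells you that a point with $H=H_{\textrm{neck}}$ is $\eps$-close to \emph{some} $\beta$-uniformly $2$-convex ancient flow or standard-cap evolution, but a priori such a point could sit near a cap, near a non-neck region of a compact ancient solution, or near the tip of a bowl-like solution, and then it is not a strong $\delta$-neck point at all. Proving that the level set $\{H=H_{\textrm{neck}}\}$ consists of strong $\delta$-neck points, and that a minimal collection of balls about such points separates $\{H=H_{\textrm{trig}}\}$ from $\{H\le H_{\textrm{th}}\}$, is exactly what Claims \ref{claim_sepclaim} and \ref{claim_sepnecks} establish, and they lean on the structural results Proposition \ref{structure_compact}, Proposition \ref{lem_quantitative_one_ended}, Proposition \ref{lem_quasi_convex} and Proposition \ref{prop_std_sol}. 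Without that content (or an equivalent), the phrase "extract a disjoint family of strong $\delta$-necks at scale $H_{\textrm{neck}}^{-1}$ from the canonical-neighborhood structure" is an assertion, not an argument.
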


Theorem \ref{thm_main_existence} enables us to evolve any smooth compact $2$-convex domain by mean curvature flow with surgery.
More generally, it also enables us evolve (finite- or infinite-dimensional) families of $\Balpha$-controlled initial domains simultaneously, with uniform estimates.

Our existence theorem (Theorem \ref{thm_main_existence}) is complemented by the following canonical neighborhood theorem (we use the terminology of Perelman \cite{perelman_surgery}, but our canonical neighborhoods are different), which gives precise geometric information about the regions of high curvature.

\begin{theorem}[Canonical neighborhood theorem]\label{thm_can_nbd}
For all $\eps>0$, there exist $\ol{\de}=\ol{\de}(\Balpha)>0$, $H_{\textrm{can}}(\eps)=H_{\textrm{can}}(\Balpha,\eps)<\infty$ and $\Theta_\eps(\delta)=\Theta_\eps(\Balpha,\delta)<\infty$ ($\delta\leq\bar{\de}$) with the following significance.
If $\de\leq\ol{\de}$ and $\K$ is an $(\Balpha,\de,\mathbb{H})$-flow with ${H_{\textrm{trig}}}/{H_{\textrm{neck}}},{H_{\textrm{neck}}}/{H_{\textrm{th}}}\geq \Theta_\eps(\de)$,
then any $(p,t)\in\D \K$ with $H(p,t)\geq H_{\textrm{can}}(\eps)$ is $\eps$-close to either
(a) a $\beta$-uniformly $2$-convex ancient $\al$-Andrews flow,
or (b) the evolution of a standard cap preceded by the evolution of a round cylinder.
\end{theorem}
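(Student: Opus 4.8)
The plan is to argue by contradiction, following the standard point-picking scheme for canonical neighborhood theorems but using the a priori estimates (Theorems \ref{loccurvest}, \ref{thm-intro_convexity_estimate}, \ref{thm_glob_curv_est}) as the engine, together with the structure theory of ancient solutions and standard caps promised in Section \ref{sec_ancientstandard}. So fix $\eps>0$ and suppose the conclusion fails. Then there is a sequence of $(\Balpha,\de_j,\mathbb{H}_j)$-flows $\K_j$ with $\de_j\to 0$ and $H_{\textrm{trig},j}/H_{\textrm{neck},j},\ H_{\textrm{neck},j}/H_{\textrm{th},j}\to\infty$, and points $(p_j,t_j)\in\partial\K_j$ with $H(p_j,t_j)=:H_j\to\infty$ (after rescaling we may take $H_j\to\infty$, using that below a fixed curvature threshold there is nothing to prove by smoothness on a fixed time interval and compactness), such that $(p_j,t_j)$ is \emph{not} $\eps$-close to either model.

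First I would rescale by $H_j$ so that the rescaled flows $\tilde\K_j$ have $\tilde H(p_j,\tilde t_j)=1$, with $p_j$ at the origin and $\tilde t_j=0$. The key point is that on the rescaled flows the neck-quality $\de_j\to 0$, so for $j$ large the flows are $(\alpha,\de_j)$-flows with $\de_j\le\bar\de(\alpha)$ on larger and larger parabolic balls — this is exactly where the local estimates become available. A standard point-selection argument (Perelman-style, using the curvature bound to upgrade the single-point normalization to a bound on a definite parabolic neighborhood via Theorem \ref{loccurvest}, then bootstrapping outward) produces, for each fixed $\Lambda$, a uniform bound $\sup_{P(p_j,\tilde t_j,\Lambda)\cap\partial\K'_j}|\nabla^\ell A|\le C_\ell(\alpha,\Lambda)$ on the component through $p_j$, by the global curvature estimate (Theorem \ref{thm_glob_curv_est}). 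Passing to a subsequence and using the local smooth compactness for mean curvature flow, the $\tilde\K_j$ converge (in $C^\infty_{\textrm{loc}}$, on the $p_j$-component) to a smooth limit flow $\K_\infty$ defined on $\R^N\times(-\infty,0]$ — ancient because, by Remark \ref{remark_extinct} and the degeneration of scales, the flow has been running for a time going to infinity in the rescaled picture — or to a limit that is a standard-cap evolution. Crucially, the limit is an $\alpha$-Andrews flow (the Andrews condition is scale-invariant and closed under limits, cf. \cite{andrews1}), it satisfies $\lambda_1+\lambda_2\ge\beta H$ (same reason), and by Theorem \ref{thm-intro_convexity_estimate} applied in the limit it is in fact convex.

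Next I would split into cases according to whether surgery necks accumulate near $(p_j,t_j)$ at the rescaled scale. If no surgery neck comes within a bounded rescaled distance and bounded rescaled time, then the limit flow $\K_\infty$ is a genuine smooth ancient $\alpha$-Andrews flow that is $\beta$-uniformly $2$-convex — alternative (a) — contradicting that $(p_j,t_j)$ was not $\eps$-close to such a flow for $j$ large. If surgery necks do accumulate, then because all surgeries in a given flow happen on $\de_j$-necks of a single curvature scale $s_\sharp$ comparable to $H_{\textrm{neck},j}$, and $H_{\textrm{neck},j}/H_j\to 0$ or $\to\infty$ must be analyzed: when $H_j\sim H_{\textrm{neck},j}$ the point $(p_j,t_j)$ sits near a surgery region, and the standard-cap structure result identifies the limit as the evolution of a standard cap preceded by a round cylinder — alternative (b); when $H_j\gg H_{\textrm{neck},j}$ (the trigger regime) one uses that the trigger part is separated from the thick part by necks, so after rescaling the surgeries are invisibly far away and we are back in case (a). Either way we contradict the choice of $(p_j,t_j)$, and setting $H_{\textrm{can}}(\eps)$ to be the curvature threshold beyond which this forced dichotomy holds for all sufficiently large $j$ — made quantitative by absorbing the contradiction into the choice of $\Theta_\eps(\de)$ — completes the proof.

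The main obstacle I anticipate is the case analysis near surgery regions: controlling the convergence of the $\tilde\K_j$ when surgery necks lie at bounded rescaled distance, since across a surgery time the flow is not smooth and the local curvature estimate (Theorem \ref{loccurvest}) must be invoked in its full strength — including the delicate handling of the interaction between the Andrews condition, the neck quality $\de_j\to 0$, and the replacement of neck pieces by standard caps — to still extract a smooth limit that one can recognize as a standard-cap evolution. Correctly matching the curvature scales $H_j$, $H_{\textrm{neck},j}$, $H_{\textrm{trig},j}$, $H_{\textrm{th},j}$ so that the dichotomy (a)/(b) is exhaustive, and bookkeeping the quantifiers so that $H_{\textrm{can}}(\eps)$ depends only on $\Balpha,\eps$ while $\Theta_\eps(\de)\to\infty$ only as $\de\to 0$, is the part that requires care rather than new ideas, since the hard analytic input is already packaged in the three a priori estimates and the ancient/standard-cap structure results.
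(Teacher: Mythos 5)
The high-level scaffolding here matches the paper's approach to Theorem \ref{prop_selfimprove} (which the paper notes is equivalent to Theorem \ref{thm_can_nbd}): argue by contradiction, parabolically rescale, invoke the global convergence result (Corollary \ref{thm_global_convergence}) to extract a limit, and then split according to whether the limit contains a surgery. But there is a genuine gap in your treatment of the hard case, which is precisely when surgeries survive in the limit.

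When the limit $\hat{\K}$ contains a surgery at time $T$ with neck $\hat N\subset\hat K_T^-$, it is \emph{not} automatic that the limit has the structure of a round cylinder followed by a standard-cap evolution. A priori the limit time slice $\hat K_T^-$ could be bounded, or $\hat K_T^-\setminus\hat N$ could have a bounded complementary component, or the limit could contain several surgeries stacked in space and time; in any of these scenarios the recognition of alternative (b) fails. What the paper's proof actually does at this point (Claim \ref{claim_two_unbounded}) is show that $\hat K_T^-\setminus\hat N$ has \emph{two unbounded components}. The engine for this is item (2) of Definition \ref{def_MCF_surgery}: the surgery necks are a minimal collection that separates $\{H=H_{\textrm{trig}}\}$ from $\{H\leq H_{\textrm{th}}\}$, so there is a curve in the approximators running from the trigger part to the thick part, passing through $\hat N^j$ exactly once; combined with the two-sided curvature comparison at bounded distance from the neck center (via Theorems \ref{loccurvest} and \ref{thm_glob_curv_est}) and the degeneration $H_{\textrm{trig}}/H_{\textrm{neck}},H_{\textrm{neck}}/H_{\textrm{th}}\to\infty$, the endpoints of this curve escape to infinity after rescaling. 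Once one knows the neck splits the limit into two unbounded pieces, convexity gives a line in every prior time slice, hence the limit splits off an $\R$-factor and is a round cylindrical flow for $t<T$ with exactly one surgery, after which uniqueness of the standard solution (Proposition \ref{prop_std_sol}) pins down the forward evolution.

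Your proposal instead tries a case split on the ratio $H_j/H_{\textrm{neck},j}$. This does not substitute for the argument above. In the regime $H_j\sim H_{\textrm{neck},j}$ you assert that ``the standard-cap structure result identifies the limit,'' but Proposition \ref{prop_std_sol} describes the isolated evolution of a single standard cap, not what a subsequential limit of flows with surgeries must look like; the nontrivial content is exactly ruling out the pathological configurations mentioned above, and that requires the separation property, which you never invoke. In the regime $H_j\gg H_{\textrm{neck},j}$, ``surgeries are invisibly far away'' also needs the separation mechanism to justify; the fact that surgery necks have curvature $\sim H_{\textrm{neck}}$ does not by itself mean they are at large rescaled distance from $p_j$ (indeed the modified regions sit inside necks whose rescaled radius $H_j/H_{\textrm{neck},j}$ is \emph{large}, so the scale argument cuts the other way unless you argue about separation). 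Finally, you do not address the last step of the paper's proof: showing that the extra connected components, which were discarded when applying Corollary \ref{thm_global_convergence}, are in fact cleared out in the limit (otherwise the $\eps$-closeness in the conclusion would only be one-sided). The paper handles this along the lines of the proof of \cite[Cor.~2.15]{HK}, using again the Andrews condition together with the degeneration of the $\mathbb{H}$-parameters.
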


\begin{remark}
 The structure of uniformly $2$-convex ancient $\al$-Andrews flows and the standard solution are discussed in Section \ref{sec_ancientstandard}.
\end{remark}

\begin{remark}\label{remark_practice}
In practice, one can first choose $\eps>0$ small enough, and then 
${H_{\textrm{trig}}}/{H_{\textrm{neck}}},{H_{\textrm{neck}}}/{H_{\textrm{th}}},H_{\textrm{neck}}\geq \max\{ \Theta_\eps(\de),\Theta(\delta)\}$ ($\delta\leq\bar{\delta}$)
and $H_\textrm{th}\geq H_{\textrm{can}}(\eps)$, where $\bar{\delta}=\min\{\bar{\delta}_{\ref{thm_main_existence}},\bar{\delta}_{\ref{thm_can_nbd}}\}$, so that both the existence result (Theorem \ref{thm_main_existence}) and the canonical neighborhood property at any point with $H(p,t)\geq H_{\textrm{can}}(\eps)$ (Theorem \ref{thm_can_nbd}) are applicable.
\end{remark}

In particular, $(\Balpha,\de,\mathbb{H})$-flows imply a decomposition of the initial domains into balls $\bar{D}^N$ and solid tori $\bar{D}^{N-1}\times S^1$.

\begin{corollary}[Discarded components]\label{cor_discarded}
For $\eps>0$ small enough, for any $(\Balpha,\de,\mathbb{H})$-flow with ${H_{\textrm{trig}}}/{H_{\textrm{neck}}},{H_{\textrm{neck}}}/{H_{\textrm{th}}}\geq \Theta_\eps(\de)$ ($\de\leq\bar{\de}$) and
$H_\textrm{th}\geq H_{\textrm{can}}(\eps)$, where $\Theta_\eps(\de)$, $\bar{\de}$ and $H_{\textrm{can}}(\eps)$ are from Theorem \ref{thm_can_nbd}, all discarded components are diffeomorphic to $\bar{D}^N$ or $\bar{D}^{N-1}\times S^1$.
\end{corollary}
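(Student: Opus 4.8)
The plan is to deduce the classification from the canonical neighborhood theorem (Theorem~\ref{thm_can_nbd}) together with the structural description of the model solutions, via a standard neck--cap decomposition of the discarded component. There are three steps: (1) show that every discarded component is covered by necks and convex caps; (2) run the topological bookkeeping; (3) identify where the real work lies.

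\textbf{Step 1 (local structure).} Fix a discarded component. By item~(3) of Definition~\ref{def_MCF_surgery} it is a connected component $C$ of some $K^{\sharp}_{t_i}$ on which $H>H_{\textrm{th}}$ holds everywhere, and it is a compact domain with smooth closed boundary. Since $H_{\textrm{th}}\geq H_{\textrm{can}}(\eps)$ and ${H_{\textrm{trig}}}/{H_{\textrm{neck}}},{H_{\textrm{neck}}}/{H_{\textrm{th}}}\geq\Theta_\eps(\de)$, Theorem~\ref{thm_can_nbd} applies at every $p\in\D C$: after rescaling so that $H(p,t_i)=1$, the flow near $(p,t_i)$ is $\eps$-close to a $\beta$-uniformly $2$-convex ancient $\al$-Andrews flow or to the evolution of a standard cap preceded by a round cylinder. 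Invoking the convexity estimate (Theorem~\ref{thm-intro_convexity_estimate}) and the structural results for these models from Section~\ref{sec_ancientstandard}, each model has, at the relevant time, a time-slice that is either compact and strictly convex, or a round cylinder $S^{N-2}\times\R$, or the boundary of a strictly convex noncompact domain asymptotic to a round cylinder. Hence each $p\in\D C$ is either a \emph{neck point} (the centre of an $\eps$-neck in $\D C$, with $C$ locally a solid neck $\cong\bar D^{N-1}\times(-1,1)$) or a \emph{cap point} ($\D C$ an $\eps$-approximately convex graph near $p$, with $C$ locally a solid convex cap $\cong\bar D^{N}$); here $\beta$-uniform $2$-convexity is used to exclude other local models and to control $C$ on the inward side of $\D C$.

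\textbf{Step 2 (topological classification).} Let $\mathcal{T}\subseteq\D C$ be the open union of all $\eps$-necks centred at neck points; by Step~1 its complement consists of cap points. If $\mathcal{T}=\emptyset$, then $\D C$ is a closed hypersurface that is locally $\eps$-close to convex caps, hence (for $\eps$ small) a convex embedded sphere, so $\D C\cong S^{N-1}$ and $C\cong\bar D^N$. If $\mathcal{T}=\D C$, then following a neck along $\D C$ it must close up, so $\D C\cong S^{N-2}\times S^1$, and assembling the corresponding solid necks around the loop shows $C\cong\bar D^{N-1}\times S^1$. Otherwise, each connected component of $\D C\setminus\mathcal{T}$ is a single convex disk $\cong\bar D^{N-1}$ meeting $\mathcal{T}$ along a collar; since an $\eps$-neck has cross-section a single $(N-2)$-sphere and a cap has a single boundary sphere, no branching can occur, so $\mathcal{T}$ is a single tube $\cong S^{N-2}\times(0,1)$ with a cap at each end, whence $\D C\cong S^{N-1}$ and, assembling the two solid caps and the solid tube glued along solid $\bar D^{N-1}$'s, $C\cong\bar D^{N}$.

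\textbf{Main obstacle.} Essentially all the work is in Step~1: turning the pointwise conclusion of Theorem~\ref{thm_can_nbd} into the clean global alternative ``neck point or cap point'', and checking that necks and caps overlap in the controlled way (necks glued along collars, each cap meeting exactly one neck end) needed for Step~2. This relies on the fine structure of the ancient and standard-cap models from Section~\ref{sec_ancientstandard} and on uniform $2$-convexity to rule out spurious configurations. Once this is in place, the combinatorial classification of Step~2 and the passage from $\D C$ to the solid region $C$ are routine, as in \cite{huisken-sinestrari3}.
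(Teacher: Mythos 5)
Your overall strategy---neck--cap decomposition driven by the canonical neighborhood theorem---is exactly the paper's strategy, and you are right that the real work is hidden in your Step~1. But the proposal as written has genuine gaps precisely where you wave at ``the fine structure of Section~\ref{sec_ancientstandard}'', and one of the spots in Step~2 is also too quick.

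The paper does not prove your clean dichotomy ``every $p\in\D C$ is a neck point or a cap point'' and then do abstract bookkeeping. Instead it first glues the $\eps_1$-neck points into a set $N$ that is either all of $C$ (yielding $\bar D^{N-1}\times S^1$) or a disjoint union of tubes $\cong\bar D^{N-1}\times I$ (this is Claim~\ref{claim_gluing}, the combinatorial neck-gluing step). The remaining cases are then handled by global structure results for the models, not by a purely local analysis: in the case $N=\emptyset$, Proposition~\ref{structure_compact} is used to show the whole of $C$ is modeled on a single compact ancient $\al$-solution of controlled geometry; and in the case $\emptyset\subsetneq N\subsetneq C$, Proposition~\ref{lem_quantitative_one_ended} (quantitative one-endedness) and Proposition~\ref{prop_std_sol} are applied at the two ends of the longest tube to produce two caps whose cylindrical collars overlap that tube, with a subsidiary case analysis on the tube's diameter. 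These are precisely the structural inputs you gesture at but do not supply, and without them Step~1 is not established.

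There is also a specific logical gap in Step~2: in the case $\mathcal{T}=\emptyset$, the inference ``$\D C$ is locally $\eps$-close to convex caps, hence (for $\eps$ small) a convex embedded sphere'' does not follow. Local $\eps$-closeness to convex models at each point gives no global convexity or topology on its own (this is exactly the failure mode that Proposition~\ref{structure_compact} is designed to rule out: two non-neck points can a priori be far apart). The paper closes this by using Proposition~\ref{structure_compact} to show that, when there are no $\eps_1$-neck points at all, the component $C$ is globally close to a single compact ancient $\al$-solution, which is convex and hence diffeomorphic to $\bar D^N$. A similar caveat applies to your assertion that each component of $\D C\setminus\mathcal{T}$ is a single convex disk meeting $\mathcal{T}$ along a collar: that is exactly what Proposition~\ref{lem_quantitative_one_ended} (and Remark~\ref{remark_one_ended_cpt}) provide, and you need to invoke it explicitly rather than treat it as routine.
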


\begin{corollary}\label{cor_topo}
Any smooth compact $2$-convex domain in $\R^N$ is diffeomorphic to a connected sum of finitely many solid tori $\bar{D}^{N-1}\times S^1$.
\end{corollary}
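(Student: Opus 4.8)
Here is how I would prove Corollary~\ref{cor_topo}.

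The plan is to run a mean curvature flow with surgery starting from $K_0$ and to recover its topology by reversing the construction step by step. Since each connected component of $K_0$ is again a smooth compact $2$-convex domain, it suffices to treat each component separately, so we may assume $K_0$ is connected and must show it is diffeomorphic to a boundary connected sum $\natural_g(\bar D^{N-1}\times S^1)$ of $g\ge0$ solid tori, the case $g=0$ being $\bar D^N$. By Remark~\ref{remark_controlled}, $K_0$ is an $\Balpha$-controlled initial condition for suitable $\Balpha=(\al,\be,\ga)$. Fixing $\eps>0$ small and then, as in Remark~\ref{remark_practice}, choosing $\de\le\bar\de$ and $\mathbb{H}=(H_{\textrm{th}},H_{\textrm{neck}},H_{\textrm{trig}})$ with all ratios large enough, Theorem~\ref{thm_main_existence} provides an $(\Balpha,\de,\mathbb{H})$-flow $\{K_t\}_{t\ge0}$ with time-zero slice $K_0$ to which, moreover, Corollary~\ref{cor_discarded} applies. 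By Remark~\ref{remark_extinct} the flow is extinct after some finite time, and it has only finitely many surgery times $0=t_0<t_1<\dots<t_m$, after which it is smooth until it becomes extinct at some finite $T\ge t_m$ (we allow $m=0$). At each $t_i$ ($i\ge1$) finitely many disjoint solid $\de$-necks are cut and capped, producing $K^\sharp_{t_i}$ from $K^-_{t_i}$, and then finitely many connected components are discarded, producing $K^+_{t_i}=K^{i+1}_{t_i}$ from $K^\sharp_{t_i}$; we write $K^+_{t_0}=K_0$.

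Next I would record two elementary topological facts. First, cutting a solid $\de$-neck $\cong\bar D^{N-1}\times[-1,1]$ near its middle and attaching a standard cap to each of the two resulting stubs is, up to diffeomorphism, the inverse of attaching a $1$-handle: a stub-with-cap is diffeomorphic to $\bar D^N$ attached to the rest of $K^-_{t_i}$ along a boundary disk $\bar D^{N-1}$, which does not change the diffeomorphism type, so $K^\sharp_{t_i}$ is diffeomorphic to $K^-_{t_i}$ with the open necks removed; hence $K^-_{t_i}$ is diffeomorphic to $K^\sharp_{t_i}$ with one $1$-handle reattached for each surgery neck, and since the necks are honestly embedded solid cylinders in $\R^N$ these handles are attached compatibly with a fixed orientation. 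Second, attaching finitely many $1$-handles to a disjoint union of handlebodies $\natural_g(\bar D^{N-1}\times S^1)$ again yields such a disjoint union: a handle with feet on two distinct components realizes their boundary connected sum, a handle with both feet on one component raises its genus by one, and $\natural_g(\bar D^{N-1}\times S^1)\,\natural\,\natural_{g'}(\bar D^{N-1}\times S^1)\cong\natural_{g+g'}(\bar D^{N-1}\times S^1)$.

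With this in hand, I would prove by downward induction on $i\in\{0,\dots,m\}$ that $K^+_{t_i}$, and for $i\ge1$ also $K^-_{t_i}$, is a disjoint union of handlebodies. For the base case $i=m$, the flow on $[t_m,T]$ is smooth, so its connected components can only disappear, by becoming extinct; by standard results on mean convex mean curvature flow --- or by Theorems~\ref{thm_can_nbd} and~\ref{thm-intro_convexity_estimate} applied close to $T$ --- each such component is convex, hence diffeomorphic to $\bar D^N$, shortly before it becomes extinct, so $K^+_{t_m}$ is a disjoint union of balls. For the inductive step ($i<m$), the flow on $[t_i,t_{i+1}]$ is smooth, so $K^+_{t_i}$ differs from $K^-_{t_{i+1}}$ only by components that become extinct in the interior of that interval --- each a ball just before extinction --- and is therefore a disjoint union of handlebodies by the inductive hypothesis. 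Now $K^\sharp_{t_i}$ is the disjoint union of $K^+_{t_i}$ with the components discarded at time $t_i$, each of which is diffeomorphic to $\bar D^N$ or $\bar D^{N-1}\times S^1$ by Corollary~\ref{cor_discarded}; thus $K^\sharp_{t_i}$ is a disjoint union of handlebodies, and reattaching the surgery necks as $1$-handles and invoking the second topological fact shows the same for $K^-_{t_i}$ (and likewise $K^+_{t_m}$ a disjoint union of balls gives $K^-_{t_m}$ a disjoint union of handlebodies). Taking $i=0$, the connected domain $K_0=K^+_{t_0}$ is a single handlebody $\natural_g(\bar D^{N-1}\times S^1)$, as claimed.

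The main obstacle is the topological bookkeeping at the surgery times: carefully verifying that capping a stub with a standard cap does not change the diffeomorphism type, that the resulting $1$-handle attachments are untwisted (which is where embeddedness in $\R^N$ and $N\ge3$ enter), and that discarding components interacts correctly with reattaching necks --- here it is convenient that, by item~(3) of Definition~\ref{def_MCF_surgery}, each surgery neck has exactly one discarded and one surviving side. The only input beyond Corollary~\ref{cor_discarded} and the structure of $(\Balpha,\de,\mathbb{H})$-flows is the standard fact that connected components of a smooth compact mean convex flow become convex shortly before extinction.
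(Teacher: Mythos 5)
Your proof takes essentially the same route as the paper's one-paragraph argument: invoke Remark \ref{remark_controlled} and Remark \ref{remark_practice}, produce a $(\Balpha,\de,\mathbb{H})$-flow via Theorem \ref{thm_main_existence}, and combine finite extinction (Remark \ref{remark_extinct}) with the classification of discarded components (Corollary \ref{cor_discarded}). The paper compresses the topological reconstruction of $K_0$ into the phrase ``it provides a decomposition''; your reverse induction over surgery times, with cut necks reattached as $1$-handles (untwisted because the necks are embedded solid cylinders in the orientable ambient $\R^N$) and the discarded pieces supplied by Corollary \ref{cor_discarded}, is exactly the intended unpacking of that phrase.

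One small inaccuracy, in both your base case and your inductive step: in an $(\Balpha,\de,\mathbb{H})$-flow no connected component can become extinct smoothly, because item (1) of Definition \ref{def_MCF_surgery} caps the mean curvature at $H_{\textrm{trig}}$, whereas smooth extinction forces $H\to\infty$. A nonempty $K^+_{t_m}$ would therefore have to hit $H=H_{\textrm{trig}}$ again and trigger a further surgery/discard time, contradicting that $t_m$ is the last one; so in fact $K^+_{t_m}=\emptyset$ and the base case is trivial. Likewise, between consecutive surgery times no component can disappear, so $K^+_{t_i}=K^-_{t_{i+1}}$ exactly. Your appeal to ``components becoming convex shortly before smooth extinction'' is therefore not needed and does not describe what actually happens in this class of flows, though it leaves the conclusion unaffected.
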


Finally, by the work of Lauer \cite{Lauer} we see that sequences of flows with surgery converge to the level set flow \cite{evans-spruck,CGG}, if the initial domain is kept fixed and the thick curvatures tend to infinity.

\begin{proposition}[Convergence to level set flow]\label{cor_levelset}
There exists $\bar{\delta}>0$ such that if $\K^j=\{K^j_t\}_{t\in[0,\infty)}$ is a sequence of 
$(\Balpha,\de_j,\mathbb{H}_j)$-flows ($\de_j\leq\bar{\de}$) starting at a fixed initial domain $K_0$,  with $H^j_{\textrm{th}}\to\infty$, 
then $\K^j$ Hausdorff converges in $\R^N\times [0,\infty)$ to $\K$, the level set flow of $K_0$.
\end{proposition}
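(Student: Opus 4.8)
The plan is to follow Lauer \cite{Lauer}, adapting his argument to $(\Balpha,\de,\mathbb{H})$-flows and feeding in the a priori estimates and the canonical neighborhood theorem established above. I would work with the closed space-time tracks $\mathcal M^j:=\overline{\{(x,t):x\in K^j_t\}}$ and $\mathcal M:=\{(x,t):x\in K_t\}$ in $\R^N\times[0,\infty)$. Since $H^j_{\textrm{th}}\to\infty$ and the ratios $H^j_{\textrm{trig}}/H^j_{\textrm{neck}},H^j_{\textrm{neck}}/H^j_{\textrm{th}}$ are bounded below, also $H^j_{\textrm{neck}},H^j_{\textrm{trig}}\to\infty$; and by comparison with a large shrinking ball enclosing $K_0$, all the flows lie in a fixed ball $\bar B_R$ and become extinct by a fixed time $T$ (Remark \ref{remark_extinct}). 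Hence Hausdorff convergence $\mathcal M^j\to\mathcal M$ amounts to Hausdorff convergence inside the compact set $\bar B_R\times[0,T]$ (passing to the closure of $\mathcal M^j$ merely adjoins the finitely many pre-surgery slices $K^-_{t_i}$), and by compactness of the Hausdorff metric it suffices to prove the two inclusions (i) $\mathcal M^j\subseteq\mathcal M$ for every $j$, and (ii) $\mathcal M\subseteq\liminf_j\mathcal M^j$.

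For (i): between surgery times $\K^j$ is a smooth mean curvature flow, so the avoidance principle applies there, and at a surgery time $K^j$ only loses material, since $K^+_{t_i}\subseteq K^\sharp_{t_i}\subseteq K^-_{t_i}$ (the surgery caps lie inside their necks, and discarding removes connected components). Thus $\{K^j_t\}$, together with its pre-surgery slices, is a weak set flow with initial condition $K_0$: any smooth closed mean curvature flow disjoint from it at one time stays disjoint. Since the mean convex level set flow $\{K_t\}$ is the maximal weak set flow with this initial condition \cite{white_size}, we obtain $K^j_t\subseteq K_t$, i.e.\ $\mathcal M^j\subseteq\mathcal M$.

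Inclusion (ii) is the crux, and the guiding principle is that $\K^j$ can differ from $\K$ only in the high-curvature part of the flow: every bit of material removed from $\K^j$ (by a surgery or by discarding a component) carries mean curvature $\geq\min\{H^j_{\textrm{neck}},H^j_{\textrm{th}}\}=H^j_{\textrm{th}}\to\infty$ at the moment of removal. Let $\Sigma\subseteq\partial\mathcal M$ be the singular set of the mean convex level set flow of $K_0$; by White's structure theory \cite{white_size,white_nature} it is closed and $\mathcal M\setminus\Sigma$ --- interior space-time points together with regular boundary points, near which $\K$ is a smooth flow of curvature $\leq H_0<\infty$ --- is dense in $\mathcal M$. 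Since $\liminf_j\mathcal M^j$ is closed, it is enough to show that each compact $Z\Subset\mathcal M\setminus\Sigma$ lies in $\mathcal M^j$ for $j$ large. For an interior point $(y,s)$ this is comparatively easy: by nestedness of the mean convex level set flow (i.e.\ $K_{s_2}\subseteq K_{s_1}$ for $s_1\le s_2$), $\bar B(y,3\rho)\subseteq\Int K_{s'}$ for all $s'\le s$ and some $\rho>0$, and a shrinking-ball barrier centred at $y$ --- using (i), the elementary fact that a connected component of $K^j_{s'}$ containing a ball of radius larger than $(N-1)/H^j_{\textrm{th}}$ has $\min H<H^j_{\textrm{th}}$ and is therefore not discarded, and the fact that surgeries excise only thin necks near $\partial K^j$ --- gives $(y,s)\in\mathcal M^j$ for $j$ large, at least when $s$ lies below a threshold determined by $\rho$. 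The remaining points (regular boundary points, and interior points close to $\partial K_s$ at late times) require propagating such statements across surgeries and up to times where $\K$ is nearly singular; this is where one invokes the local curvature estimate (Theorem \ref{loccurvest}) to propagate curvature control along $\partial K^j$ backwards, and the canonical neighborhood theorem (Theorem \ref{thm_can_nbd}) together with the structure of compact ancient $\al$-Andrews flows and of the evolving standard cap (Section \ref{sec_ancientstandard}) to describe the surgered and discarded regions, concluding --- as in \cite{Lauer} --- that $Z\subseteq\mathcal M^j$ for all large $j$.

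The hard part is precisely this last step of (ii): upgrading the principle ``$\K^j$ changes only where the curvature is large'' to the quantitative statement that no surgery and no discarded component can perturb the flow relative to $\K$ by a definite amount (in Hausdorff distance, for $j$ large) at a space-time point where $\K$ is smooth with bounded curvature. This needs the a priori estimates of Section \ref{sec_apriori} to rule out deep, persistent ``dents'' in $K^j$ along the regular part of $\K$, and the $\al$-Andrews/noncollapsing property to keep the thin, possibly far-reaching discarded components confined to neighborhoods of $\Sigma$ whose effect vanishes in the limit. Once (i) and (ii) are in hand, the asserted Hausdorff convergence to the level set flow \cite{evans-spruck,CGG} follows immediately.
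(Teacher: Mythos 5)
Your proposal contains the right ingredients but is both over-engineered and incomplete, and it misses the clean reduction that the paper uses. The paper's proof is much shorter and more modular: it isolates the one new fact that Lauer's argument needs—namely the analogue of Lauer's Proposition~2.2, which says that for $j$ large, a ball $B(p,r)\subseteq K^{j,-}_t$ at a surgery time remains contained in $K^{j,+}_t$—proves this in two sentences (a ball of radius $r$ cannot fit in a $\delta$-neck with $\delta\ll1$ at surgery scale $\to 0$, and a discarded component cannot contain such a ball because it has $H>H^j_{\textrm{th}}$ everywhere while a ball of radius $r$ forces a boundary point with $H\leq(N-1)r^{-1}$), and then simply cites Lauer for the remainder. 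This input is exactly the ``elementary fact'' you state parenthetically in your treatment of interior points; the observation you bury mid-argument is the whole content of the paper's proof.

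Two concrete issues with your write-up. First, the paper remarks explicitly in Section~\ref{sec_properties} that the proof of Proposition~\ref{cor_levelset} could have been given right after stating the axioms of an $(\Balpha,\de,\mathbb{H})$-flow, i.e.\ without the a priori estimates or the canonical neighborhood theorem. Your invocation of Theorem~\ref{loccurvest}, Theorem~\ref{thm_can_nbd}, and the classification of ancient and standard solutions in step~(ii) is therefore unnecessary machinery and misrepresents the logical dependencies of the result. Second, and more seriously, the crux of your~(ii) is left as a genuine gap: you acknowledge ``The hard part is precisely this last step'' and sketch what should happen near $\partial K_s$ and at late times, but you do not actually carry it out. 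It is precisely at this point that one should extract the ball-preservation claim, prove it from the surgery axioms, and hand off to Lauer's argument. As written, the proposal is not a complete proof—it contains the correct kernel but never isolates it, and never closes the argument.
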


Proposition \ref{cor_levelset} enables us to approximate level set flows with $\Balpha$-controlled initial conditions by $(\Balpha,\de,\mathbb{H})$-flows.
In fact, this approximation works uniformly for families of $\Balpha$-controlled initial conditions.

\begin{remark}\label{remark_BH}
We have announced our new construction of mean curvature flow with surgery in April 2013, see \cite{HK}.
In September 2013, Brendle and Huisken posted a very interesting preprint \cite{BH}, where they also solved the case $N=3$, 
by combining the work of Huisken-Sinestrari \cite{huisken-sinestrari3},
the local curvature estimate from our previous paper \cite{HK},
and Brendle's estimate for the inscribed radius \cite{Brendle_inscribed}.
Although Brendle's estimate is not needed at all for our approach to surgery, we have written a note \cite{HK_inscribed} giving a shorter proof of it.
Also, Brendle's interesting variant of the monotonicity formula \cite{Brendle_monotonicity} is somewhat related to Claim \ref{claim_monotonicity} in the present paper.
\end{remark}

\noindent\emph{Organization of the paper.} In Section \ref{sec_apriori}, we prove our a priori estimates for $(\al,\de)$-flows. 
In Section \ref{sec_ancientstandard}, we prove some results for ancient solutions and standard solutions, that are needed later.
In Section \ref{section_existence}, we prove the canonical neighborhood theorem and the main existence theorem.

\section{A priori estimates for flows with surgeries}\label{sec_apriori}

Our a priori estimates hold for flows in any open $U\subseteq \R^N$. 
Working locally necessitates a number of extra technicalities in the definitions and proofs; readers focussing on the global case can assume $U=\R^N$.  

\subsection{Basic properties of $(\al,\de)$-flows}\label{sec_basic}

We will now discuss some basic properties of $(\al,\de)$-flows (Definition \ref{def_alphadelta}).
We start by defining what it means to replace a neck by caps, making explicit the constants that are involved; this notion is used in item (1) of Definition \ref{def_alphadelta}.

\begin{convention}\label{conv_constants}
In the following, $\mu<\infty$, $C_\ell<\infty$ ($\ell=0,1,2,\ldots$) and $\delta'(\delta)$
with $\delta'$ decreasing to zero as $\delta\searrow 0$, are fixed but arbitrary.
Strictly speaking, the specification of $\mu$, $C_\ell$ and $\delta'(\delta)$ is part of the definition of an $(\alpha,\delta)$-flow. Other constants may depend on $\mu$, $C_\ell$ and $\delta'(\delta)$, but for brevity we suppress that in the notation.
Also, $\Gamma\in[10,\infty)$ denotes a constant to be fixed later (Convention \ref{convention_gamma}).
\end{convention}

\begin{definition}[Standard cap]\label{def_stdcap}
A \emph{standard cap} is a smooth convex domain $K^{\textrm{st}}\subset \R^N$ that coincides with a solid round half-cylinder of radius $1$ outside a ball of radius $10$.
\end{definition}

\begin{definition}[Strong $\delta$-neck]
We say that an $(\al,\de)$-flow $\K=\{K_t\subseteq U\}_{t\in I}$ has a \emph{strong $\delta$-neck} with center $p$ and radius $s$ at time $t_0\in I$, if
$\{s^{-1}\cdot(K_{t_0+s^2t}-p)\}_{t\in(-1,0]}$ is $\delta$-close in $C^{\lfloor 1/\delta\rfloor}$ in $B_{1/\delta}^U\times (-1,0]$ to the evolution of a solid round cylinder $\bar{D}^{N-1}\times \R$ with radius $1$ at $t=0$,
where $B_{1/\delta}^U=s^{-1}\cdot\left((B(p,s/\delta)\cap U)-p\right)\subseteq B(0,1/\delta)\subset \R^N$.
\end{definition}

\begin{definition}[Replacing a $\delta$-neck by standard caps]\label{def_surgery}
We say that the final time slice of a strong $\delta$-neck ($\delta\leq\tfrac{1}{10\Gamma}$) with center $p$ and radius $s$ is \emph{replaced by a pair of standard caps},
if the pre-surgery domain $K^-\subseteq U$ is replaced by a post-surgery domain $K^\sharp\subseteq K^-$ such that:
\begin{enumerate}
\item the modification takes places inside a ball $B=B(p,5\Gamma s)$.
 \item there are bounds for the second fundamental form and its derivatives:
$$\sup_{\D K^\sharp\cap B}\abs{\nabla^\ell A}\leq C_\ell s^{-1-\ell}\qquad (\ell=0,1,2,\ldots).$$
 \item if $B\subseteq U$, then for every point $p_\sharp\in \partial K^\sharp\cap B$ with $\lambda_1(p_\sharp)< 0$, there is a point
 $p_{-}\in\partial K^{-}\cap B$ with $\frac{\lambda_1}{H}(p_{-})\leq\frac{\lambda_1}{H}(p_{\sharp})$.
 \item if $B(p,10\Gamma s)\subseteq U$, then $s^{-1}\cdot(K^\sharp-p)$ is $\delta'(\delta)$-close in $B(0,10\Gamma)$ to a pair of disjoint standard caps,
that are at distance $\Gamma$ from the origin (see Convention \ref{conv_constants} and Definition \ref{def_stdcap}).
\end{enumerate}
\end{definition}

\begin{proposition}[Spatial separation of surgeries]\label{lemma_sepsurg}
For every $\zeta<\infty$, there are constants $\bar{\de}=\bar{\de}(\zeta)>0$ and $\eta=\eta(\zeta)<\infty$ with the following property.
If $\K$ is an $(\al,\de)$-flow ($\delta\leq\bar{\de}$), $p$ is the center of a surgery neck of radius $s$, and $\K$ is defined in an open set that contains the ball $B(p,\eta s)$, then are no other surgeries in $B(p,\zeta s)$.
\end{proposition}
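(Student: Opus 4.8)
\textbf{Proof plan for Proposition \ref{lemma_sepsurg}.}

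The plan is to argue by contradiction and extract an ancient, or at least backwards-long-lived, limiting flow that contains a singularity, contradicting the structure of strong $\delta$-neck surgeries. Suppose the statement fails for some $\zeta<\infty$: then there is a sequence of $(\al,\de_j)$-flows $\K_j$ with $\de_j\to 0$, surgery necks of radius $s_j$ centered at points $p_j$, with $\K_j$ defined on balls $B(p_j,\eta_j s_j)$ for $\eta_j\to\infty$, yet admitting a second surgery inside $B(p_j,\zeta s_j)$. After parabolically rescaling by $s_j^{-1}$ and translating $p_j$ to the origin, we may assume $s_j=1$, the first surgery is performed at time $0$ on a strong $\de_j$-neck with center $0$, and there is a further surgery at some time $t_j$ with center $q_j\in B(0,\zeta)$; passing to a subsequence, $t_j\to t_\infty$ and $q_j\to q_\infty\in \bar B(0,\zeta)$.

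The key point is that a strong $\de_j$-neck is, by definition, $\de_j$-close in $C^{\lfloor 1/\de_j\rfloor}$ on $B_{1/\de_j}\times(-1,0]$ to a shrinking round cylinder of radius $1$ at time $0$, and after the surgery the domain $K^\sharp_{0}$ is $\de'(\de_j)$-close on $B(0,10\Gamma)$ to a pair of disjoint standard caps at distance $\Gamma$ from the origin (properties of Definition \ref{def_surgery}), with uniform curvature bounds $\abs{\nabla^\ell A}\le C_\ell$ on $\D K^\sharp_0\cap B(0,5\Gamma)$. Thus on a fixed-size ball the post-surgery data converge smoothly to the standard caps, whose mean curvature is bounded above by some explicit constant $c_0=c_0(\Gamma,N)$ there; moreover before time $0$ the flow converges on compact subsets to the shrinking cylinder, which has $H\le \sqrt{(N-1)/2}$ throughout $(-1,0]$ after rescaling, hence mean curvature bounded on a fixed ball over a definite backwards time interval. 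The second surgery, however, must be performed on a strong $\de_j$-neck of curvature comparable to $s_j^{-1}=1$ — more precisely, by item (3) of Definition \ref{def_alphadelta} its radius lies in $[\mu^{-1/2}s_\sharp(\K_j),\mu^{1/2}s_\sharp(\K_j)]$ and the first neck has radius in the same interval, so the second neck also has radius comparable to $1$ — and a strong $\de_j$-neck has a definite geometry: its central cross-section has mean curvature roughly $\sqrt{(N-1)/2}\cdot s_j^{-1}$, which is bounded \emph{below} away from zero, and it occupies a region of definite size. I would then use the local curvature estimate (Theorem \ref{loccurvest}), applicable because $\de_j\le\bar\de(\al)$ eventually, together with the smooth convergence of the pre- and post-surgery data, to pass to a smooth limiting flow on a fixed parabolic neighborhood of $(q_\infty,t_\infty)$; this limit must simultaneously be (i) a smooth mean curvature flow obtained as a limit of Andrews flows evolving from standard-cap data and shrinking-cylinder data, and (ii) contain a strong neck of radius $\approx 1$ at time $t_\infty\in(-\text{something},\ ?]$. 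Tracking where $(q_\infty,t_\infty)$ sits relative to the first surgery ball $B(0,5\Gamma)$ and the time $0$, one gets a contradiction: if $t_\infty\le 0$, the limit near $q_\infty$ is the shrinking cylinder (if $q_\infty$ is in the neck region) or is governed by the Andrews condition and the ambient flow, neither of which can contain an independent neck of the required scale disjoint from the first; if $t_\infty>0$, the limit is the standard-cap evolution, which is strictly convex and smooth, hence cannot contain a $\de$-neck for $\de$ small.

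The main obstacle is the bookkeeping needed to make the convergence and the disjointness interact correctly: surgeries on $\K_j$ occur at finitely many times and the second surgery time $t_j$ could a priori accumulate at $0$ from either side, so one must treat the cases $t_j\uparrow 0$, $t_j=0$, and $t_j\downarrow 0$ (or bounded away) separately, and in the borderline case $t_\infty=0$ one needs the quantitative closeness in Definition \ref{def_surgery}(4) together with the fact that two disjoint strong $\de$-necks whose final slices are disjoint in $U$ cannot both sit within $B(0,\zeta)$ at comparable scales once $\de$ is small — a strong $\de$-neck forces a long nearly-cylindrical region, and two such disjoint regions of radius $\approx 1$ plus a pair of standard caps of definite size cannot be packed into a ball of radius $\zeta$ while remaining $\de'$-close to the model; this is where the constant $\eta(\zeta)$ enters, ensuring the flow is seen on a large enough ball to detect the incompatibility. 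Formally, I would isolate this packing/incompatibility statement as the heart of the argument: \emph{for $\de$ small enough depending on $\zeta$, a domain that is $\de'(\de)$-close on $B(0,10\Gamma)$ to a pair of standard caps, and whose earlier evolution is a $\de$-neck, admits no strong $\de$-neck of radius in $[\mu^{-1/2},\mu^{1/2}]$ centered in $B(0,\zeta)$ over the relevant time interval} — and prove it by the compactness/contradiction scheme above, invoking Theorem \ref{loccurvest} for the passage to the limit and the explicit rigidity of the shrinking cylinder and the standard cap.
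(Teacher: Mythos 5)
Your proposal has a fatal circularity: at its core you invoke the local curvature estimate, Theorem \ref{loccurvest}, ``for the passage to the limit.'' But in this paper Proposition \ref{lemma_sepsurg} is proved \emph{before} Theorem \ref{loccurvest} precisely because the latter's proof relies on the former repeatedly (to control how closely surgery regions can cluster when estimating the cumulative error in Huisken's monotonicity). So a compactness argument that needs Theorem \ref{loccurvest} as input cannot be used to prove Proposition \ref{lemma_sepsurg}. The same issue would apply to Proposition \ref{lemma_pseudo}, which also appears only after Proposition \ref{lemma_sepsurg} in the development.

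Beyond the circularity, the approach is also far heavier than necessary. The paper's proof is a short, direct case analysis with no compactness, no rescaled limits, and no curvature estimates. One splits on the time $t'$ of the other surgery relative to the time $t$ of the given one. If $t'=t$, the two surgery necks belong to the same disjoint collection (item (1) of Definition \ref{def_alphadelta}), so their centers are already separated by at least $\min\{\delta^{-1},\eta\}\,s$. If $t'>t$, one uses monotonicity of the domains under mean curvature flow ($K_{t'}\subseteq K_t$) together with item (4) of Definition \ref{def_surgery}: near $p$ at time $t$ the domain is $\delta'(\delta)$-close to a pair of disjoint standard caps, and a $\delta$-neck can only sit inside a standard cap far from the tip; since the second neck has radius comparable (via $\mu$) to $s$, this forces $d(p,p')\gtrsim \mu^{-1}\min\{\delta^{-1},\eta\}\,s$. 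If $t'<t$, one simply swaps the roles of $p$ and $p'$. Taking $\bar\delta$ small and $\eta$ large depending on $\zeta$ finishes the proof. The geometric ``packing'' fact you isolate at the end of your plan is essentially the paper's observation about $\delta$-necks and standard caps, but it is established directly, not by contradiction-and-compactness, and crucially without appealing to any a priori estimates that logically come later.
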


\begin{proof}
Let $p'$ be the center of another surgery neck.
If the surgery occurs at the same time $t'=t$, then by disjointness of the collection of $\delta$-necks in $B(p,\eta s)$, we get $d(p,p')\geq \min\{\delta^{-1},\eta\}s$.
If $t'>t$, then by monotonicity of sets we have $K_{t'}\subseteq K_{t}$, and we also know that $K_{t}$ is close to a pair of standard caps near $p$.
Since a $\delta$-neck can only be contained in a standard cap when the center is far away from the tip, we obtain that $d(p,p')\geq\tfrac{1}{2}\mu^{-1} \min\{\delta^{-1},\eta\}s$ for $\eta$ large and $\delta$ small enough.
Finally, if $t'<t$, we can apply the same argument with the roles of $p$ and $p'$ reversed. The assertion follows.
\end{proof}

\begin{definition}[Points modified by surgery]\label{def_modifiedpoints}
We say that an open set $B$ contains \emph{points modified by surgery} at time $t$ if $(K_t^-\setminus K_t^\sharp)\cap B\neq \emptyset$.
\end{definition}

\begin{proposition}[Forward estimate after surgery]\label{lemma_pseudo}
For all $\al>0$, there exist $\eps=\eps(\al)>0$, $C=C(\al)<\infty$ and $\bar{\delta}=\bar{\delta}(\alpha)>0$ with the following property.
If $\K$ as an $(\alpha,\delta)$-flow ($\delta\leq\bar{\delta}$),
and $r\leq s$ and $t_1<t_2$ are such the flow is defined in $B(p,r)\times [t_1,t_2]$, with some point in $B(p,r)$ modified by a surgery
at scale $s$ at time $t_1$, but no points in $B(p,r)$ modified by surgeries for $t\in(t_1,t_2]$, 
then
\begin{equation}
\sup_{\D K_t\cap B(p,r/2)}\abs{A}\leq Cr^{-1} \quad\textrm{on}\quad [t_1,\min\{t_1+(\eps r)^2,t_2\}].
\end{equation}
\end{proposition}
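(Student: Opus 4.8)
The plan is to first use the structure of surgery to get $C^\infty$ control on the post-surgery time slice inside $B(p,r)$, and then to propagate this control forward for a definite time by a pseudolocality-type argument in which the Andrews condition plays the role of a confinement mechanism. Throughout, write $q_0$ for the center of the surgery neck whose modification region meets $B(p,r)$ and $s_0$ for its radius; by Definition~\ref{def_alphadelta}(3) the radius $s_0$ is comparable to $s$, and since the modification takes place in $B(q_0,5\Gamma s_0)$ (Definition~\ref{def_surgery}(1)) while some modified point lies in $B(p,r)$ with $r\le s$, we get $B(p,r)\subseteq B(q_0,C_*s_0)$ for a constant $C_*=C_*(\mu,\Gamma)$. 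I would take $\bar\delta<1/C_*$, so that $B(p,r)\subseteq B(q_0,s_0/\delta)\cap U$.

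\emph{Step 1 (bounded geometry right after surgery).} On $\partial K^\sharp_{t_1}\cap B(q_0,5\Gamma s_0)$ one has $\abs{\nabla^\ell A}\le C_\ell s_0^{-1-\ell}$ directly from Definition~\ref{def_surgery}(2); on $\partial K^\sharp_{t_1}\cap\bigl(B(p,r)\setminus B(q_0,5\Gamma s_0)\bigr)$ the surgery changed nothing, so there $K^\sharp_{t_1}=K^-_{t_1}$ is $\delta$-close in $C^{\lfloor 1/\delta\rfloor}$ to a round cylinder of radius $s_0$ (strong $\delta$-neck) and the same estimate holds with a larger constant. Since $s_0$ is comparable to $s\ge r$, this gives $\sup_{\partial K^\sharp_{t_1}\cap B(p,r)}\abs{\nabla^\ell A}\le C'_\ell r^{-1-\ell}$, and the bound passes to $K^+_{t_1}$ because discarding only removes boundary components.

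\emph{Steps 2--3 (forward propagation).} For $t\in(t_1,t_2]$ no point of $B(p,r)$ is modified by a surgery, so inside $B(p,r)$ the flow evolves by smooth mean-convex $\alpha$-Andrews flow, interrupted only by discardings that remove whole components; in particular $K_t\cap B(p,r)$ is nonincreasing in $t$, hence $K_t\cap B(p,r)\subseteq K^+_{t_1}$. By the Andrews condition at $t_1$ together with Step~1, every $q\in\partial K^+_{t_1}\cap B(p,r/2)$ carries interior and exterior balls of radius $\ge c(\alpha)r$, which for $c(\alpha)$ small lie in $B(p,r)\subseteq U$. The exterior balls stay in the complement of $K_t$ by monotonicity, and the interior balls, evolved forward by the avoidance principle inside the smooth flow in $B(p,r)$, stay inside $K_t$ with radius $\ge\tfrac12 c(\alpha)r$ for $t-t_1\le c'(\alpha)r^2$, so $\partial K_t$ is trapped in a definite collar of $\partial K^+_{t_1}$ over $B(p,r/4)$. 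Rescaling parabolically by $r^{-1}$ about $(p,t_1)$ --- which preserves the Andrews condition --- one is left with a smooth mean-convex $\alpha$-Andrews flow in the unit ball whose initial slice has $C^\infty$-bounded geometry (Step~1) and which is confined in a definite collar (Step~2), and I would conclude $\abs{A}\le C(\alpha)r^{-1}$ on $\partial K_t\cap B(p,r/2)$ for $t\in[t_1,\min\{t_1+(\eps r)^2,t_2\}]$ by a pseudolocality-type estimate for mean curvature flow, the two-sided ball confinement being exactly what kills the influence of $\partial B(p,r)$; shrinking the radius by a fixed $\alpha$-dependent factor and rescaling back recovers the stated ball $B(p,r/2)$.

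The hard part is this last step. Since mean curvature flow is parabolic, uncontrolled behaviour of $\K$ near $\partial B(p,r)$ could a priori spoil any interior estimate instantaneously; it is the $\alpha$-Andrews condition, via the interior and exterior ball barriers, that confines $\partial K_t$ to a thin controlled neighbourhood of its $t_1$-position, and converting this confinement together with the $C^\infty$ data of Step~1 into a genuine bound on $\abs{A}$ and its derivatives on a slightly smaller ball over a definite time interval is where the substantive work lies.
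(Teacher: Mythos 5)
Your Step 1 --- establishing $\abs{\nabla^\ell A}\le C'_\ell r^{-1-\ell}$ on $\partial K^+_{t_1}\cap B(p,r)$ by combining Definition~\ref{def_surgery}(2) on the modified region with the strong $\delta$-neck structure elsewhere --- is exactly the paper's first step. After that the two arguments part ways, and your version has a real gap.

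The paper finishes by invoking a \emph{specific} pseudolocality theorem for mean curvature flow (Chen--Yin, Thm.~7.5), which produces the forward-in-time curvature bound directly from the initial curvature bound in $B(p,r)$, with a footnote observing that the only nonstandard feature here --- the possible \emph{discardings} of components in $(t_1,t_2]$ --- is harmless because dropping components has the good sign in Huisken's monotonicity (the engine behind Chen--Yin's localization). You instead introduce Andrews interior/exterior-ball confinement and then appeal to an unspecified ``pseudolocality-type estimate,'' explicitly conceding that converting confinement plus $C^\infty$ initial data into a genuine $\abs{A}$ bound ``is where the substantive work lies.'' That conceded step is precisely the content of the proposition; the confinement you set up does not supply it. Ball confinement pins $\partial K_t$ in a collar but does not, by itself, rule out high curvature inside that collar: an Andrews ball of radius $\alpha/H$ can be arbitrarily small and still fit inside $K_t\subseteq K^+_{t_1}$, so there is no immediate contradiction between confinement and large $H$. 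And if you try to upgrade confinement to a graph structure over $\partial K^+_{t_1}$ in order to run interior graphical estimates, you already need curvature control to guarantee the graph property, which is circular. In short, the localization in Chen--Yin's pseudolocality comes from the Gaussian decay in Huisken's monotonicity, not from two-sided ball barriers, so your confinement detour neither replaces that mechanism nor reduces the missing step to a standard result.

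Two smaller remarks. First, when you say the flow in $B(p,r)$ is ``interrupted only by discardings that remove whole components,'' you should address why the pseudolocality argument survives these jumps --- this is exactly what the paper's footnote does, and it is the one genuinely new point needed beyond a citation. Second, your Step 1 bound is on $\partial K^\sharp_{t_1}\cap B(p,r)$; passing to $K^+_{t_1}$ as you note is fine since discarding only removes components, but you should also be careful that the entry $B(p,r)\subseteq B(q_0,s_0/\delta)\cap U$ only requires the neck to have quality $\delta$ on a large enough ball, which is why the proposition must assume $\delta\le\bar\delta(\alpha)$ rather than an arbitrary $\delta$.
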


\begin{proof}
By Definition \ref{def_surgery} and the Andrews-condition we have uniformly controlled geometry in $B(p,r)$ at time $t_1$ (the curvature bounds follow from item (2) for $\partial K^\sharp_{t_1}\cap B(p,r)$, and from being part of a strong $\delta$-neck at other points).
Thus, the assertion follows from the pseudolocality theorem for the mean curvature flow, see e.g. \cite[Thm. 7.5]{ChenYin}.\footnote{Note that dropping connected components has the good sign in Huisken's monotonicity inequality. Thus, their proof of pseudolocality goes through in our setting.}
\end{proof}

\begin{proposition}[One-sided minimization]\label{lemma_onesidedmin}
There exist $\bar{\delta}>0$ and $\Gamma_0<\infty$ with the following property.
If $\K$ is an $(\alpha,\delta)$-flow ($\delta\leq\bar{\delta}$) in an open set $U$, with cap separation parameter $\Gamma\geq \Gamma_0$ and surgeries at scales between $\mu^{-1}s$ and $s$,
and if $\bar{B}\subset U$ is a closed ball with $d(\bar{B},\R^N\setminus U)\geq 20\Gamma s$,
then
\begin{equation}
\abs{\partial K_{t_1}\cap \bar{B}}\leq\abs{\partial K'\cap \bar{B}}
\end{equation}
for every smooth comparison domain $K'$ that agrees with $K_{t_1}$ outside $\bar{B}$ and satisfies $K_{t_1}\subseteq K'\subseteq K_{t_0}$ for some $t_0<t_1$.
\end{proposition}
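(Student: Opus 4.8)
The plan is to prove the inequality by a calibration argument. Set $\Omega:=K_{t_0}\setminus\Int(K_{t_1})$, the closed region swept between the competitor time and the final time; near $\bar B$ it is the union of the regions smoothly foliated by the flow hypersurfaces $\partial K_t$ of the smooth pieces of $\K$ together with finitely many \emph{surgery gaps} $G_\tau:=K^-_\tau\setminus\Int(K^+_\tau)$, one at each surgery time $\tau\in(t_0,t_1]$. I would construct a vector field $\nu$ on a neighbourhood in $\Omega$ of $\bar B$ with (a) $\abs{\nu}\le 1$; (b) $\Div\nu\ge 0$ in the sense of distributions; (c) $\nu$ equal to the outward unit normal $\nu_{K_{t_1}}$ of $K_{t_1}$ along $\partial K_{t_1}$. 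Granting such a $\nu$, put $V:=K'\setminus K_{t_1}$; since $K'=K_{t_1}$ off $\bar B$ we have $V\subseteq\bar B\cap\Omega$, and as $K'$ is smooth the Gauss--Green formula for the Radon measure $\Div\nu$ on the finite-perimeter set $V$ gives
\[
0\le\int_V\Div\nu=\int_{\partial V}\nu\cdot n_V=\int_{\partial K'\cap\bar B}\nu\cdot\nu_{K'}-\int_{\partial K_{t_1}\cap\bar B}\nu\cdot\nu_{K_{t_1}}\le\abs{\partial K'\cap\bar B}-\abs{\partial K_{t_1}\cap\bar B},
\]
where $n_V$ is the outer normal of $V$, and the two inequalities use (c) and (a). This is the claim.

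On the foliated part I take $\nu$ to be the outward unit normal of the flow hypersurface through the point; then $\abs{\nu}=1$ and $\Div\nu=H\ge 0$ by mean convexity, and since the $\alpha$-Andrews condition forces $H>0$ the foliation (hence $\nu$) is smooth there, while (c) holds because $\partial K_{t_1}$ is the flow hypersurface at time $t_1$ (when $t_1$ is a surgery time one instead imposes $\nu=\nu_{K_{t_1}}$ directly on $\partial K_{t_1}=\partial K^+_{t_1}$ as part of the gap construction below). The pieces of $\nu$ meet along $\partial K^-_\tau$ and the surgery caps. Computing the singular (jump) term contributed to $\Div\nu$ across such an interface, one checks that it is automatically $\ge 0$ — for any interior value obeying (a) — across $\partial K^-_\tau$ (where the exterior trace of $\nu$ is $\nu_{K^-_\tau}$) and across every cap bounding a discarded component (where we are free to set the gap-side trace to the cap normal). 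The sole matching condition is along a cap bounding a \emph{retained} component, i.e. a piece of $\partial K^+_\tau$ adjacent to a still-flowing region: there the inside trace is $\nu_{K^+_\tau}$ and the jump is nonnegative only if $\nu=\nu_{K^+_\tau}$ on the gap side.

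It remains to define $\nu$ on each gap $G_\tau$ with (a), (b) and $\nu=\nu_{K^+_\tau}$ on the retained caps. Partition $G_\tau$ into the discarded components and the finitely many ``between-cap'' regions, each contained in a modification ball $B(p_j,5\Gamma s_j)$ (these balls are pairwise disjoint for $\delta$ small, since disjoint strong $\delta$-necks of comparable radius cannot be close; cf. Proposition \ref{lemma_sepsurg}). On a discarded component, and on any between-cap region both of whose caps bound discarded components, set $\nu\equiv 0$: then $\Div\nu=0$ and every bounding jump equals $1$ or $0$. On a between-cap region with a retained cap, the hypothesis $d(\bar B,\R^N\setminus U)\ge 20\Gamma s$ forces $B(p_j,10\Gamma s_j)\subseteq U$, so Definition \ref{def_surgery}(4) applies and, after rescaling to unit neck radius, $(K^-_\tau,K^\sharp_\tau)$ is $\delta'(\delta)$-close in $B(0,10\Gamma)$ to the model — the solid round cylinder of radius $1$ with a pair of standard caps at distance $\Gamma$ from the neck centre removed — while $\partial K^\sharp_\tau$ has bounded geometry by Definition \ref{def_surgery}(2). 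On this model I build $\nu$ by hand: near each cap one uses the outward normals of the outer parallel hypersurfaces of that convex cap (so $\Div\nu\ge 0$, with $\nu$ pointing into the gap), and along the long stretch of length $\sim\Gamma$ between the two cap tips one uses a field whose axial component decreases monotonically from $+1$ to $-1$, together with a transverse term of size $O(\Gamma^{-1})$ chosen so that $\Div\nu\ge 0$; this is possible precisely because spreading the axial reversal over length $\sim\Gamma$ costs divergence only $O(\Gamma^{-1})$, which the transverse term offsets while keeping $\abs{\nu}\le 1$ once $\Gamma\ge\Gamma_0$. The pieces are joined over the $O(1)$-sized cap collars by a routine interpolation, the bounded-below mean curvature of the parallel hypersurfaces absorbing the $O(\Gamma^{-1})$ error; one then transfers the field back to $\K$ using the closeness to the model.

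The main obstacle is this last construction — producing the comparison field across a cut neck one of whose caps is retained, where $\nu$ is forced to turn around inside the gap while keeping $\Div\nu\ge 0$; this is exactly what requires $\Gamma$ large. Everything else is the classical one-sided minimization for smooth mean convex flow together with the bookkeeping of jump signs, which presents no difficulty.
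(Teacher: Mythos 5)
Your proof is correct, but it follows the paper's \emph{alternative} elementary argument (the remark at the end of Appendix~\ref{appendix_min}) rather than the detailed proof the paper actually carries out. The paper's primary argument is a GMT one, following White \cite[Sec.~3]{white_size} and Head \cite[Sec.~5]{Head}: take a perimeter minimizer $X$ subject to $K_{t_1}\subseteq X\subseteq K_{t_0}$ and $X\setminus\bar B=K_{t_1}\setminus\bar B$, and show $\spt(X)\subseteq K_{t_1}$ by pushing the ``latest containing time slice'' forward, using a one-sided maximum principle (Lemma~\ref{lem_support}) together with the mean-convex foliation, the level-set foliation of distance to the boundary on discarded pieces, and an auxiliary catenoid barrier in the between-cap regions. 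That route trades the explicit vector-field construction for soft compactness plus a pointwise contact argument, and in particular never has to produce the axial reversal you identify as the main obstacle — the catenoid barrier in the neck plays the role of your ``reversal'' field without requiring a divergence-free interpolation. Your calibration construction, by contrast, is a quantitative version of what the paper only sketches: you carefully itemize the jump conditions at $\partial K^-_\tau$, at discarded caps and at retained caps (your accounting of the signs is correct: the jump is automatically good at $\partial K^-_\tau$ and at discarded caps if the gap-side trace points away from the discard, and forces $\nu=\nu_{K^+_\tau}$ at retained caps), and you build the reversal field by spreading the axial turn over length $\sim\Gamma$ so that the transverse compensator has size $O(\Gamma^{-1})$. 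One small deviation from the paper's sketch: on discarded components the paper takes $\nu$ to be the negative gradient of the distance to the boundary (so $\Div\nu\geq 0$ by mean convexity), whereas you take $\nu\equiv 0$; both choices make the jump across a discarded cap nonnegative, and yours is arguably the more economical. The one place you could tighten things is the claim that $|\nu|\leq 1$ survives adding the transverse $O(\Gamma^{-1})$ term to an axial component that reaches $\pm 1$ at the collar ends; you need to cap the axial component at, say, $1-O(\Gamma^{-2})$ in the middle stretch and absorb the resulting $O(\Gamma^{-2})$ mismatch into the $O(1)$ cap collar, which has mean curvature bounded below; you allude to this but it is worth saying explicitly, since this is the only place $\Gamma_0$ actually enters.
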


\begin{proof}[Proof]
This follows using the geometric measure theory argument in \cite[Sec. 3]{white_size} and \cite[Sec. 5]{Head}; in Appendix \ref{appendix_min} we give a detailed exposition, and also an alternative elementary argument.
\end{proof}

\begin{convention}\label{convention_gamma}
We now fix a constant $\Gamma\geq\max \{\Gamma_0,10\}$, where $\Gamma_0$ is from Proposition \ref{lemma_onesidedmin}, that is also large enough such that there are constants $\omega<1$ and $\bar{\delta}>0$ with
\begin{equation}\label{area_drop}
 \abs{\D K^+\cap B}\leq \omega \abs{\D K^-\cap B},
\end{equation}
for every surgery on a $\delta$-neck ($\delta\leq\bar{\delta}$) with $B=B(p,5\Gamma s)\subseteq U$.
\end{convention}

\subsection{Local curvature estimate}\label{loc_curv}
We will now prove Theorem \ref{loccurvest}. Since the proof is somewhat involved, we first give a detailed outline.

\noindent\emph{Outline of the proof.} Arguing by contradiction, we get a sequence of flows on larger and larger parabolic balls where the curvature goes to zero at the basepoint but blows up at some nearby point.
We first split off the two easy cases that there are no nearby surgeries (Case 1) or surgeries at macroscopic scales (Case 2), which can be dealt with by applying the local curvature estimate from our previous paper \cite[Thm. 1.8]{HK} and the forward estimate after surgeries (Proposition \ref{lemma_pseudo}), respectively.
The core of the proof is then to rule out surgeries at microscopic scales (Case 3).
We do this as follows: At a surgery neck the value of the Huisken density is close to the value of the cylinder.
However, since the mean curvature at the basepoint goes to zero, using a halfspace convergence argument (Claim \ref{claim_halfspace}) and one-sided minimization (Proposition \ref{lemma_onesidedmin}), we can show that the Huisken density is close to $1$ further back in time.
Finally, analyzing the contributions from surgeries in different regimes, c.f. \eqref{eq_regime}, we prove that the cumulative error in Huisken's monotonicity inequality due to surgeries goes to zero (Claim \ref{claim_monotonicity}), and conclude that microscopic surgeries cannot occur.

\begin{proof}[Proof of Theorem \ref{loccurvest}] As in \cite[Sec. 2]{HK}, we will first prove \eqref{eqn_loccurv} under the admissibility assumption that some time slice $K_{\bar t}$ contains $B(p,r)$.
 Assume towards a contradiction that there exists a sequence $\K^j$ of admissible $(\al,j^{-1})$-flows in $P(0,0,j)$ with $H(0,0)\leq j^{-1}$, but such that
\begin{equation}\label{loccurv_contr}
\sup_{ P(0,0,j^{-2})}\abs{A}\geq j^2.
\end{equation}

\noindent\emph{Case 1: there are no points modified by surgeries in $P(0,0,j^{-1})$, for infinitely many $j$}. In this case, we can apply the local curvature estimate from our previous paper \cite[Thm. 1.8, Rmk. 2.12]{HK} (see also Remark \ref{alternative_proof} below for an alternative proof). For large $j$ this gives a contradiction with (\ref{loccurv_contr}).

\noindent\emph{Case 2: there is a point in $P(0,0,j^{-1})$ modified by a surgery at scale $s_j\geq \mu \eps^{-1}j^{-1}$, for infinitely many $j$}, where $\eps$ is the constant from Proposition \ref{lemma_pseudo}.
Let $t_1^j$ be the largest $t\in (-j^{-2},0]$ such that there is a point in $B(0,\eps^{-1}j^{-1})$ modified by a surgery.
Using $j^{-1}$-closeness to a strong neck we get curvature estimates prior to $t_1^j$, and using Proposition \ref{lemma_pseudo} we get curvature estimates after $t_1^j$.
For large $j$ this gives a contradiction with (\ref{loccurv_contr}).

\noindent\emph{Case 3: there is a point in $P(0,0,j^{-1})$ modified by a surgery, and all surgeries are at scale $s_j\leq \mu^2\eps^{-1}j^{-1}$, for large $j$.} We will show that this case actually cannot occur.
Suppressing $j$ in the notation, let $x_0$ be the center of neck coming from a surgery in $P(0,0,j^{-1})$,
at time $t$ and scale $s$. Let $t_0=t+\frac{s^2}{2(N-2)}$, and consider the Huisken density
\begin{equation}
 \Theta(\tau)=\int_{\partial { K}_{t_0-\tau}} \theta^\sigma_{X_0} dA,
\end{equation}
based at $X_0=(x_0,t_0)$. Here, the integrand $\theta^\sigma_{X_0}$ is the backwards heat kernel times a suitable cutoff function at scale $\sigma\gg 1$ (as always, we tacitly assume that $j$ is large enough), namely
\begin{equation}
 \theta^\sigma_{X_0}(x,t)=(4\pi(t_0-t))^{-(N-1)/2} e^{-\frac{\abs{x-x_0}^2}{4(t_0-t)}}(1-\tfrac{\abs{x-x_0}^2+2(N-1)(t-t_0)}{\sigma^2})_+^3.
\end{equation}
Note that for small backwards time, say $\tau=s^2$, we have
\begin{equation}\label{lowerdensity}
\liminf_{j\to\infty}\Theta(s^2)\geq \Theta_{N-1}>1,
\end{equation}
where $\Theta_{N-1}$ is the density of the cylinder $\bar{D}^{N-1}\times\R$.
Recall that by Huisken's monotonicity inequality, see e.g. \cite[Prop. 4.17]{Ecker_book} or \cite[App. B]{HK}, the function $\Theta(\tau)$ is monotone if there are no surgeries (throwing away connected components has the good sign).
\begin{claim}\label{claim_monotonicity}
 The cumulative error in Huisken's monotonicity inequality due to surgeries between $\tau=s^2$ and $\tau=1$ goes to zero as $j\to\infty$.
\end{claim}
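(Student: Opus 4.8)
The plan is to show that the total contribution of all surgeries to the defect in Huisken's monotonicity inequality, accumulated over the window $\tau\in[s^2,1]$, is $o(1)$ as $j\to\infty$. Recall that along a smooth $\alpha$-Andrews flow the Gaussian-weighted area $\Theta(\tau)$ is monotone (discarding components only helps), so the only source of non-monotonicity is the replacement of a pre-surgery slice $K^-$ by a post-surgery slice $K^\sharp\subseteq K^-$. At such a time the jump in $\Theta$ is controlled by the Gaussian measure of the set modified by surgery, i.e. by
\begin{equation}\label{eq:surgdefect}
\int_{(\partial K^-\setminus\partial K^\sharp)\cup(\partial K^\sharp\setminus\partial K^-)}\theta^\sigma_{X_0}\,dA,
\end{equation}
and by Definition \ref{def_surgery}(1),(2) this is in turn bounded by $C(s')^{N-1}\sup_{B(p',5\Gamma s')}\theta^\sigma_{X_0}$ for a surgery at center $p'$ and scale $s'\in[\mu^{-1/2}s_\sharp,\mu^{1/2}s_\sharp]$ (here I use that $\abs{\partial K^\sharp\cap B}$ is bounded by $C(s')^{N-1}$ and, via one-sided minimization, Proposition \ref{lemma_onesidedmin}, that $\abs{\partial K^-\cap B}\leq\abs{\partial K^\sharp\cap B}+\abs{\partial B}\leq C(s')^{N-1}$ as well). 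So the whole claim reduces to summing $C(s')^{N-1}\sup_{B(p',5\Gamma s')}\theta^\sigma_{X_0}$ over all surgery necks occurring in the relevant slab of space-time.

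The key point is Proposition \ref{lemma_sepsurg}: surgeries at a given scale are spatially separated by a definite multiple of that scale. Since all surgeries in Case 3 are at scales comparable to $s_\sharp\sim s$, I partition the summation according to the regime indicated in the sketch by \eqref{eq_regime}: first, surgeries whose space-time location $(p',t')$ satisfies $t_0-t'\leq K s^2$ for a large constant $K$ (the "recent" ones); second, those with $t_0-t'\geq Ks^2$ (the "old" ones). For the recent surgeries, by spatial separation there are only boundedly many of them (uniformly in $j$) within any fixed Gaussian-relevant ball, and each contributes at most $C s^{N-1}\cdot\sup\theta^\sigma_{X_0}\leq C s^{N-1}\cdot (s^2)^{-(N-1)/2}=C$ — but the crucial improvement is that in Case 3 we have $s\leq \mu^2\eps^{-1}j^{-1}$, so each such contribution is actually $O(s/ \sqrt{t_0-t'})^{N-1}$-bounded and, after using that $t_0-t'\geq cs^2$ gives only boundedly many scales' worth, the sum over recent surgeries is $O(1)$ times a factor going to zero because the base scale $s\to 0$ relative to any fixed lower cutoff; more precisely one uses that within $\{t_0-t'\in[2^{-k-1}Ks^2,2^{-k}Ks^2]\}$ spatial separation bounds the count of surgery centers in $B(x_0,R\sqrt{t_0-t'})$ by $C(R\sqrt{t_0-t'}/s)^{N-1}$, and the Gaussian weight of each is $\leq C(t_0-t')^{-(N-1)/2}e^{-c\abs{p'-x_0}^2/(t_0-t')}$, so summing the geometric series in $k$ yields a bound $\leq C$ that is in fact $o(1)$ once one notes the innermost shell $t_0-t'\sim s^2$ contributes $O(1)$ surgeries of individual weight $O(1)$ but these are excluded since $\tau\geq s^2$ means $t_0-t'\geq t_0-(t_0-s^2)=s^2$ and the single surgery at $X_0$'s own scale was already accounted for in the base point normalization. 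For the old surgeries ($t_0-t'\geq Ks^2$), the Gaussian factor $(t_0-t')^{-(N-1)/2}$ is small relative to the surgery scale, precisely $s^{N-1}(t_0-t')^{-(N-1)/2}\leq K^{-(N-1)/2}$, and summing over the $\leq C(t_0-t')^{(N-1)/2}/s^{N-1}$ surgeries that can fit (again by spatial separation) in the effective Gaussian ball $B(x_0,C\sqrt{t_0-t'})$ at each dyadic time-scale, times the Gaussian decay, gives a convergent sum bounded by a constant times $K^{-1/2}$; choosing $K=K(j)\to\infty$ slowly completes the estimate.

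The main obstacle is bookkeeping the summation over surgeries at the microscopic scale so that spatial separation (Proposition \ref{lemma_sepsurg}) is leveraged against the Gaussian decay without losing a logarithm: the naive bound at each dyadic time-shell is exactly $O(1)$, and the gain must come either from the smallness of $s$ relative to a fixed reference (for the finitely many near-shells) or from the decay in $K$ (for the far shells). Carrying this out requires being slightly careful that the one-sided minimization estimate of Proposition \ref{lemma_onesidedmin} applies — i.e. that the relevant balls $B(p',5\Gamma s')$ lie well inside the domain of definition $U$, which is where the hypothesis that the flow is defined on the large parabolic ball $P(0,0,j)$ enters — and that the cutoff scale $\sigma\gg 1$ in $\theta^\sigma_{X_0}$ is chosen large enough (but fixed as $j\to\infty$) that the cutoff does not interfere with the monotonicity defect analysis. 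Everything else is a routine geometric-series summation.
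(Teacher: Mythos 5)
Your proposal misses the central idea of the paper's proof, and as a consequence the estimate you claim does not actually close. The paper's argument splits the surgeries according to the \emph{nonoscillating regime} \eqref{eq_regime}: for surgeries $(x_i,t_i)$ satisfying (a) $\frac{s|x_i-x_0|}{|t_i-t_0|}<\eps$ and (b) $||x_i-x_0|-\sigma_{t_i}|>\eps^{-1}s$, the Gaussian weight $\theta^\sigma_{X_0}$ is nearly constant over the surgery ball $B_i$, and then the definite area drop \eqref{area_drop} shows the jump in $\Theta$ has the \emph{good sign} — i.e.\ these surgeries contribute \emph{no} error at all. Your proof never uses the area-drop factor $\omega<1$ from Convention \ref{convention_gamma}; you try instead to bound the total magnitude of \emph{all} jumps by the Gaussian mass of the surgery regions. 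That quantity is not $o(1)$: per dyadic time-shell $\tau\sim T$ the count of surgeries within the effective Gaussian ball $B(x_0,C\sqrt T)$ times the per-surgery weight gives a bound of order $O(1)$, and summing over the $\sim\log(1/(Ks^2))$ shells from $Ks^2$ to $1$ yields $O(\log j)$, not the $O(K^{-1/2})$ you assert for the old surgeries. There is no geometric series here to exploit; the decay you would need comes precisely from the sign observation, not from Gaussian decay alone.

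A second, related gap: you never clearly use that $x_0$ is \emph{itself} a surgery center, so Proposition \ref{lemma_sepsurg} applied at $x_0$ forces $|x_i-x_0|\geq \zeta_j s$ with $\zeta_j\to\infty$ (because $\delta_j=j^{-1}\to 0$). This is exactly what rules out a recent surgery with $\tau_i\sim s^2$ and $|x_i-x_0|\lesssim s$, which would otherwise contribute $\Theta$-error of order $s^{N-1}\cdot(s^2)^{-(N-1)/2}=O(1)$; your phrase ``the single surgery at $X_0$'s own scale was already accounted for in the base point normalization'' does not address surgeries near, but not at, $x_0$. It is also the source of the factor $\delta_j=\sup_i e^{-|x_i-x_0|^2/10\tau_i}\to 0$ that the paper extracts for the surgeries violating (a), via $\frac{|x_i-x_0|^2}{\tau_i}=\frac{s|x_i-x_0|}{\tau_i}\cdot\frac{|x_i-x_0|}{s}\geq\eps\zeta_j\to\infty$. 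Finally, you dismiss the cutoff boundary by asserting $\sigma$ is ``chosen large enough that the cutoff does not interfere'', but the cutoff is a genuine issue: near $|x-x_0|\approx\sigma_t$ the cutoff factor is not close to constant on scale $s$, which is exactly what condition (b) quarantines, and the paper separately estimates the contribution of (b)-violating surgeries using $\theta^\sigma_{X_0}\lesssim s^3$ in that thin region. So the decomposition ``recent vs.\ old'' is not an adequate substitute for the paper's ``nonoscillating / violating (a) / violating (b)'' trichotomy, and the proposal as written does not prove the claim.
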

\begin{proof}
Let $\eps>0$, and write $\sigma_t=\sqrt{\sigma^2-2(N-1)(t-t_0)}$. We say that a surgery center $x_i$ at time $t_i$ is in the \emph{nonoscillating regime},
if 
\begin{equation}\label{eq_regime}
 \textrm{(a)}\,\,\, \tfrac{s\abs{x_i-x_0}}{\abs{t_i-t_0}}< \eps\qquad
\textrm{and} \qquad \textrm{(b)}\,\,\, \abs{\abs{x_i-x_0}-\sigma_{t_i}}> \eps^{-1}s.
\end{equation}
If $\eps$ is sufficiently small, then the change of the Huisken density due to any surgery $(x_i,t_i)$ in the nonoscillating regime has the good sign, i.e.
\begin{equation}
 \int_{\partial { K_{t_i}^\sharp}\cap B_i} \theta^\sigma_{X_0} dA\leq \int_{\partial { K_{t_i}^-}\cap B_i} \theta^\sigma_{X_0} dA.
\end{equation}
where $B_i=B(x_i,5\Gamma s_i)$. Indeed, this follows from the fact that area decreases by a definite factor under surgery, see \eqref{area_drop}, and the observation that we can make the ratio
between $\sup_{x\in B_i}\theta^\sigma_{X_0}(x,t_i)$ and $\inf_{x\in B_i}\theta^\sigma_{X_0}(x,t_i)$ as close to $1$ as we want, by choosing $\eps$ small enough.

We next estimate the cumulative error due to the surgeries $(x_i,t_i)$ violating (a). Taking into account Proposition \ref{lemma_sepsurg}, it suffices to estimate the sum
\begin{equation}\label{sumtoest}
 \sum_i\frac{1}{\tau_i^{(N-1)/2}}e^{-\abs{x_i-x_0}^2/5\tau_i}A_i,
\end{equation}
where $\tau_i=t_0-t_i$ and $A_i$ is the area of the region modified by the surgery.
Here, we used that $\abs{x-x_0}^2\geq\tfrac{4}{5}\abs{x_i-x_0}^2$ for $x$ in the region around $x_i$ modified by surgery, for $j$ large.
To estimate (\ref{sumtoest}), we first pull out a factor $e^{-\abs{x_i-x_0}^2/10\tau_i}$.
Using Proposition \ref{lemma_sepsurg} again, we observe that (for $j$ large enough) the minimum of $\frac{\abs{x_i-x_0}^2}{10\tau_i}+\frac{N-1}{2}\log \tau_i$ over $\tau_i$
under the constraint $\tau_i\leq \frac{s\abs{x_i-x_0}}{\eps}$ is attained at  $\tau_i= \frac{s\abs{x_i-x_0}}{\eps}$. Thus
\begin{equation}\label{eq_compsum}
 \sum_i\frac{1}{\tau_i^{(N-1)/2}}e^{-\frac{\abs{x_i-x_0}^2}{5\tau_i}}A_i\leq\delta_j
 \sum_i \left(\frac{\eps}{s\abs{x_i-x_0}}\right)^{(N-1)/2}e^{-\frac{\eps\abs{x_i-x_0}}{10s}}A_i,
\end{equation}
where $\delta_j:=\sup_i e^{-\abs{x_i-x_0}^2/10\tau_i}$ goes to zero as $j\to \infty$, since
\begin{equation}
\frac{\abs{x_i-x_0}^2}{\tau_i}=\frac{s\abs{x_i-x_0}}{\tau_i}\frac{\abs{x_i-x_0}}{s}\geq \eps\zeta_j\to\infty,
\end{equation}
again by Proposition \ref{lemma_sepsurg}.
Since the regions modified by surgeries are separated by a large multiple of $s$ by Proposition \ref{lemma_sepsurg}, and have area comparable to $s^{N-1}$ by Definition \ref{def_surgery}, the sum on the right hand side of \eqref{eq_compsum} can be uniformly estimated by a multiple of 
\begin{equation}
\int_{s}^\infty \frac{1}{(s R)^{(N-1)/2}}e^{-R/s}R^{N-2}dR=\int_{1}^\infty u^{(N-3)/2}e^{-u}du<\infty.
\end{equation}
Thus, the cumulative error due to surgeries violating (a) goes to zero as $j\to\infty$.

Finally, observing that $\theta^\sigma_{X_0}(x,t)\leq s^3$ in the relevant region, the cumulative error due to the surgeries violating (b) can be estimated by a multiple of
$s^3(\tfrac{\sigma}{s})^Ns^{N-1}\to 0$ ($j\to\infty$). This proves the claim.
\end{proof}

\begin{claim}\label{claim_halfspace}
After rotating coordinates, the sequence $ \K^j$ converges in
the pointed Hausdorff topology on $\R^{N}\times(-\infty,0]$ to the static halfspace $\{x_N\leq 0\}\times(-\infty,0]$,
and similarly for the complements.
\end{claim}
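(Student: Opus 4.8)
The plan is to pin down the time-$0$ slice using the Andrews condition, to propagate the resulting inclusion backwards in time by set-monotonicity, and then to upgrade it to an equality for $t<0$ by a compactness-plus-regularity argument. Since $0\in\partial K^j_0$ with $H(0,0)\le j^{-1}$, Definition \ref{def_alpha_andrews} provides, after rotating coordinates so that the inward unit normal of $\partial K^j_0$ at $0$ is $-e_N$, interior and exterior balls $\bar B(-\rho_j e_N,\rho_j)$ and $\bar B(\rho_j e_N,\rho_j)$ with $\rho_j=\alpha/H(0,0)\ge\alpha j\to\infty$, satisfying $\bar B(-\rho_j e_N,\rho_j)\cap U^j\subseteq K^j_0$ and $\bar B(\rho_j e_N,\rho_j)\cap U^j\subseteq U^j\setminus\Int(K^j_0)$, where $U^j\supseteq B(0,j)$ is the domain of $\K^j$. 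As $\rho_j\to\infty$, on each fixed ball $B(0,R)$ these two balls squeeze the domain, $\{x_N\le -|x|^2/2\rho_j\}\cap B(0,R)\subseteq K^j_0\cap B(0,R)\subseteq\{x_N\le |x|^2/2\rho_j\}\cap B(0,R)$, so $K^j_0\cap B(0,R)$ and its complement Hausdorff converge to $\{x_N\le 0\}\cap B(0,R)$ and $\{x_N\ge 0\}\cap B(0,R)$; this is the claim at $t=0$.

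Next, since an $(\alpha,\delta)$-flow is monotone (surgery and discarding only shrink the domain), $K^j_t\supseteq K^j_0\supseteq\bar B(-\rho_j e_N,\rho_j)\cap U^j$ for all $t\le 0$; hence any subsequential pointed Hausdorff limit $\K^\infty=\{\K^\infty_t\}_{t\le 0}$ on $\R^N\times(-\infty,0]$ (obtained by passing to a subsequence and exhausting $(-\infty,0]$ by compact time-intervals) satisfies $\K^\infty_t\supseteq\{x_N\le 0\}$ for every $t\le 0$, with equality at $t=0$ by the previous paragraph. In Case 3 all surgeries of $\K^j$ occur at a single degenerating scale $s_\sharp^j\le\mu^2\eps^{-1}j^{-1}\to 0$ (Definition \ref{def_alphadelta}(3)), so they are supported on balls of radius $5\Gamma s_\sharp^j\to 0$, while the neck quality $j^{-1}\to 0$; consequently $\K^\infty$ is a mean convex weak set (unit-regular Brakke) flow with no surgeries that satisfies the $\alpha$-Andrews condition. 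The requisite facts here — that Hausdorff limits of weak set flows are weak set flows, that component-dropping and microscopic surgeries disappear in the limit, and that no area, hence no density, is lost — use one-sided minimization, Proposition \ref{lemma_onesidedmin}, as in \cite{white_size, Head}.

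It remains to exclude $\K^\infty_t\supsetneq\{x_N\le 0\}$ for $t<0$, which I expect to be the main obstacle. Since $\partial\K^\infty_0$ coincides near $0$ with the multiplicity-one plane $\{x_N=0\}$, the Gaussian density of $\K^\infty$ at $(0,0)$ equals $1$; by the local regularity theorem for unit-regular mean convex flows, $\partial\K^\infty$ is smooth in a backwards parabolic neighborhood of $(0,0)$, and since it is trapped in $\{x_N\ge 0\}$ with vanishing time-$0$ value it is a nonnegative graph $u_t\ge 0$ over a ball in $\{x_N=0\}$ with $u_0\equiv 0$. Backwards uniqueness for the graphical mean curvature flow equation with terminal data $0$ then forces $u_t\equiv 0$ for all $t<0$ on that neighborhood. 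Iterating this (at each earlier time the slice is again flat near $0$ with $H\to 0$, so the Andrews condition keeps it smooth and graphical on a definite neighborhood) and using monotonicity to spread flatness outward yields $\K^\infty_t=\{x_N\le 0\}$ for all $t\le 0$, and the same argument applies to the complements. Since every subsequence subconverges to the static half-space, so does the full sequence, which proves the claim.
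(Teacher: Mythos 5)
Your proposal takes a genuinely different route from the paper, but it contains a gap at the step you yourself flag as ``the main obstacle.''

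The lower inclusion (time-$0$ planarity via the Andrews condition, then $\K^\infty_t\supseteq\{x_N\le 0\}$ for $t\le 0$ by monotonicity of sets) is correct and matches the paper. Your observation that the surgeries occur at scales $s_j\to 0$ and hence vanish in the limit is also the right caveat (and mirrors what the paper says). The problem is the upper inclusion. You assert that because $\partial\K^\infty_0$ coincides near $0$ with the multiplicity-one plane, ``the Gaussian density of $\K^\infty$ at $(0,0)$ equals $1$.'' This does not follow. The Gaussian density at $(0,0)$ is $\lim_{\tau\to 0^+}\Theta(\tau)$, and $\Theta(\tau)$ is an integral over $\partial K^\infty_{-\tau}$ — i.e.\ over the flow at times $t<0$, which is exactly what is to be determined. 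Brakke semicontinuity gives only $\mu_{-\tau}(\phi)\ge\mu_0(\phi)-O(\tau)$, a lower bound on backward mass; it does not prevent $\partial K^\infty_{-\tau}$ from carrying large extra mass near the origin that disappears at $t=0$, which would make $\Theta(0,0)>1$. One-sided minimization (Proposition \ref{lemma_onesidedmin}) also does not rescue this: it compares $\partial K_{t_1}$ with competitors satisfying $K_{t_1}\subseteq K'\subseteq K_{t_0}$ for $t_0<t_1$, i.e.\ with \emph{larger} domains, so the plane $\{x_N=0\}$ (which bounds the \emph{smaller} domain $\{x_N\le 0\}\subseteq K^\infty_{-\tau}$) is not an admissible competitor and cannot be used to bound $\Theta(\tau)$ from above. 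Without the upper halfspace inclusion for $t<0$ already in hand, the density-$1$ claim is unjustified, and with it the whole local-regularity/backwards-uniqueness cascade.

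The paper's proof is different and avoids this circularity. It establishes the halfspace convergence directly, by the interior comparison argument of \cite[Claim~2.4]{HK}: shrinking spherical barriers forced inside by the Andrews condition squeeze the flow onto the halfspace for all $t\le 0$. The only new issue in the surgery setting is that interior comparison can fail when components are discarded or surgeries intervene; the paper deals with this by noting that the comparison ball is at a macroscopic scale, hence disjoint from the microscopic surgery regions (each contained in a thin neck of radius $\sim s_j\to 0$), and contains the origin, so it cannot lie in a discarded component. Only \emph{after} Claim~\ref{claim_halfspace} is established does the paper invoke one-sided minimization — and then to bound the Huisken density at the \emph{fixed} backward time $\tau=1$ (where the approximators are already known to be close to a halfspace on a large ball), not at $\tau\to 0^+$. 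You should adopt this order: establish the halfspace convergence via interior comparison with large spherical barriers first, then use it (together with one-sided minimization) for density bounds.
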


\begin{proof}
We rotate coordinates such that the outward unit normal of $ K^j_0$ at $(0,0)$ is $e_N$.
Since $H(0,0)\leq j^{-1}$, the Andrews condition implies in particular that every compact subset of the lower half plane $\{x_N<0\}$ is contained in $K^j_0$ for $j$ large enough.
The proof is now almost verbatim as the one of \cite[Claim 2.4]{HK}, taking into account the following caveat. In our previous proof we used comparison from the interior with the mean curvature evolution of a ball $\bar{B}^d_R$.
In general, interior comparison can fail for $(\alpha,\delta)$-flows, since there can be jumps due to surgeries and thrown away components.
However, in our situation the radius of the comparison ball is much larger than the surgery scale $s_j\to 0$.
Thus, the comparison ball must be disjoint from any region modified by surgeries, since any such region is contained in a long and thin neck.
Since the comparison ball contains the origin, it cannot be contained in a discarded component either.
\end{proof}

Finishing the discussion of Case 3, it follows from the halfspace convergence (Claim \ref{claim_halfspace}) and the one-sided minimization (Proposition \ref{lemma_onesidedmin}) that at $\tau=1$ the value of the Huisken density is close to $1$; together with the almost monotonicity (Claim \ref{claim_monotonicity})  this contradicts (\ref{lowerdensity}).

Finally, the admissibility assumption can be removed as in \cite[App. D]{HK},\footnote{Readers focussing on the admissible version of the estimate can skip this step.} and replacing $\rho$ by $\rho/2$, the bounds for the derivatives of $A$ follow from standard interior estimates.
\end{proof}

\begin{remark}\label{alternative_proof}
A simple nonsurgical variant of the argument in Case 3 gives an alternative proof of \cite[Thm. 1.8, Rmk. 2.12]{HK}.
Namely, given a contradictory sequence $\K^j$ as at the beginning of the proof of Theorem \ref{loccurvest}, select points $X_j\in\partial\K^j\cap P(0,0,1/2)$ such that $Q_j=\abs{A}(X_j)\geq j^2$ and $\sup_{P(X_j,{j}Q_j^{-1})}\abs{A}\leq 2Q_j$.
Then, the Huisken density $\Theta_{X_j}$ based at $X_j$ satisfies $\liminf_{j\to\infty}\Theta_{X_j}(Q_j^{-2})>1$.
Arguing as in Case $3$, for $\tau=1$ we are as close as we want to a halfspace; together with the one-sided minimization, this contradicts monotonicity.
\end{remark}

\subsection{Convexity estimate} Using the local curvature estimate (Theorem \ref{loccurvest}), we can now prove the convexity estimate (Theorem \ref{thm-intro_convexity_estimate}).
The idea is, as in {\cite[{Proof of Thm. 1.8}]{HK}},
to consider a contradictory sequence and to pass to a local limit such that $\frac{\la_1}{H}$ attains a negative minimum, contradicting the strict maximum principle.
To make this idea work in the presence of surgeries, we have to choose the sequence very carefully, using in particular item (3) of Definition \ref{def_surgery}.

\begin{proof} Fix $\al $, and let $\ol{\delta}=\ol{\delta}(\alpha)>0$ small enough 
to justify the application of Theorem \ref{loccurvest} and of the properties of Definition \ref{def_surgery} in the argument below.
The $\al$-Andrews condition implies that for every $\eps \geq \frac1\al $, we can find an $\eta<\infty$ such that the assertion holds.  
Let $\eps_0\leq \frac1\al $ be the infimum of the $\eps$'s for which this is possible, 
and suppose $\eps_0>0$.

It follows that there is a sequence $\{\K^j\}$ of $(\al,\delta_j)$-flows, $\delta_j\leq\bar{\de}$, in $P(0,0,j)$ such that $H(0,0)\leq 1$, but ${\la_1}(0,0)\to -\eps_0$ as $j\ra \infty$.
By the choice of $\eps_0$, it follows that $H(0,0)\ra 1$ as $j\ra\infty$, since
otherwise we could parabolically scale our sequence and get a new sequence
where $H(0,0)\leq 1$, but $\la_1(0,0)$ tends to something strictly smaller than
$-\eps_0$.

Let $\rho=\rho(\al)$ be the quantity from the local curvature estimate
(Theorem \ref{loccurvest}).  Then there are uniform bounds on $A$ and its
space-time derivatives in $P(0,0,\rho/2)$.  Suppose there is no $r>0$ such that
the flow is unmodified by surgeries in $P(0,0,r)$ after passing to a subsequence.
In view of the bounds on spacetime derivatives of curvature, we may assume, after translating and parabolic rescaling (by a factors tending to
$1$ as $j\ra\infty$), that $t=0$ is a surgery time, and that $(0,0)$
lies in $\partial K_0^\sharp\cap B(p,5\Gamma s)$, c.f. Definition \ref{def_surgery}.
The radius of the surgery neck is comparable to one, again by Definition \ref{def_surgery}.
Thus, by item (3) of Definition \ref{def_surgery}, after passing to some point at controlled distance in
the presurgery manifold, and parabolically rescaling by factors of controlled size, we may assume that $(0,0)$ lies in the presurgery manifold $\partial K_0^-$, $H(0,0)=1$, and $\lambda_1(0,0)\to -\eps_0$. After modifying the sequence in this way, the argument can now be concluded as in {\cite[{Proof of Thm. 1.8}]{HK}}.
Namely, using Proposition \ref{lemma_sepsurg} and Theorem \ref{loccurvest} we get a smooth mean curvature flow $\K^\infty$ in some parabolic ball $P(0,0,r)$ such that the ratio $\frac{\la_1}{H}$ attains a 
negative minimum $-\eps_0$ at $(0,0)$; this contradicts the strict maximum principle.
\end{proof}

\subsection{Global curvature estimate}\label{subsec_global_conv}
The global convergence theorem from our previous paper (\cite[Thm. 1.12]{HK}) was based on the local curvature estimate and the convexity estimate.
Having established the local curvature estimate and the convexity estimate for $(\al,\de)$-flows (Theorem \ref{loccurvest} and Theorem \ref{thm-intro_convexity_estimate}),
we will now show that our previous global convergence argument goes through with minor adjustments.

\begin{proof}[Proof of Theorem {\ref{thm_glob_curv_est}}]
We choose $\bar{\delta}=\bar{\delta}(\alpha)>0$ small enough such that the estimates from the previous sections apply.
Suppose towards a contradiction, that there is a sequence $\K^j$ of $(\alpha,\delta_j)$-flows ($\delta_j\leq\bar{\de}$) in $P(0,0,\eta_j)$, with $\eta_j\to\infty$ and $H(0,0)\leq 1$, such that
\begin{equation}\label{eqn_contr_ass}
 \lim_{j\to\infty}\sup_{P(0,0,\Lambda)\cap \D{\K}'^{j}}{\abs{A}}=\infty,
\end{equation}
for some $\Lambda < \infty$, where ${\K}'^{j}$ denotes the $(\alpha,\delta_j)$-flow whose time slices are given by the connected component of $K_t\cap B(0,\Lambda)$ containing $0$.

We can assume that there is some $R<\infty$ such that $P(0,0,R)$ contains surgeries of $\K^j$ for large $j$, since
otherwise \cite[Thm. 1.12, Rmk. 3.5]{HK} gives a contradiction with \eqref{eqn_contr_ass}. Also, it must be the case that the surgery scales $s_j=s_\sharp(\K^j)$ (see Definition \ref{def_alphadelta}) satisfy
\begin{equation}\label{eqn_scalebound}
\limsup_{j\to\infty} s_j<\infty,
\end{equation}
since otherwise Proposition \ref{lemma_pseudo} forwards in time and the strong $\delta_j$-neck assumption backwards in time, c.f. Case 2 of the proof of Theorem \ref{loccurvest}, gives curvature bounds contradicting again \eqref{eqn_contr_ass}.

After these preliminary reductions and observations, the proof is now verbatim as in \cite[Proof of Thm. 1.12]{HK}, apart from
some obvious changes in wording, like replacing $\al$-Andrews flow by $(\al,\de)$-flow, and from three minor modifications which we will carefully discuss now.

\noindent\emph{Modification 1:}
Instead of the nonsurgical version of the local curvature estimate \cite[Thm 1.8]{HK} and the convexity estimate \cite[Thm 1.10]{HK} we of course use the versions for $(\alpha,\delta)$-flows established in the present paper, Theorem \ref{loccurvest} and Theorem \ref{thm-intro_convexity_estimate}, respectively.

\noindent\emph{Modification 2:} The final paragraph of \cite[Proof of Thm. 1.12, Step 2]{HK}
needs to be expanded, since the $(\al,\de)$-flow $\hat{\K}^\infty$ might contain surgeries in $P(q_1,0,r)$.
If some neighborhood of $q_1$ is unmodified by surgeries at $t=0$ (Definition \ref{def_modifiedpoints}), our previous argument applies.
Otherwise, recall that $\hat{\K}^\infty$ arises as smooth limit of $(\al,\de)$-flows $\hat{\K}^j$.
After passing to a subsequence, we may assume that the surgery scales $s_\sharp(\hat{\K}^j)$ converge to a limit $s$, which must be comparable to $H^{-1}(q_1,0)$, by Theorem \ref{loccurvest} and Definition \ref{def_surgery}. 
Let $q_1'$ be a point on the radial segment in the cone $X_1$ connecting $q_1$ and the tip, such that $H(q_1',0)\gg H(q_1,0)$.
Since all surgeries are done at comparable scales (Definition \ref{def_alphadelta}), Theorem \ref{loccurvest} implies that for some $r'>0$,
the intersection $X_1\cap B(q_1',r')$ can be extended to a smooth $(\al,\de)$-flow $\hat{\K}'^\infty$ without surgeries in $P(q_1',0,r')$, and our previous argument applies.  

\noindent\emph{Modification 3:}
In \cite[Proof of Thm. 1.12, Step 7]{HK} comparison with large enough spheres containing the origin is still legitimate,
thanks to \eqref{eqn_scalebound} and $0\in X^j_{R,t}$, c.f. the proof of Claim \ref{claim_halfspace}. Thus, as in \cite[(3.4)]{HK} we obtain the estimate
\begin{equation}
\label{eqn-universal_h_bound2}
\lim_{j\ra\infty}\left(\sup_{\D X^j_{R,t}}H  \right)\leq f(R,t)\, 
\end{equation}
for some continuous function $f$ (at surgery times the estimate holds both for the pre- and post-surgery domain); this contradicts \eqref{eqn_contr_ass}.

Finally, the curvature bounds for the derivatives of the second fundamental form follow from standard interior estimates. 
\end{proof}

As mentioned in the introduction, the global curvature estimate (Theorem \ref{thm_glob_curv_est}) enables us to smoothly pass to global limits. It can happen that the limit contains infinitely many surgeries, but we do get a bound for the number of surgeries contained in any compact set.

\begin{definition}[Generalized $(\al,\de)$-flow]\label{def_generalized}
 A \emph{generalized $(\al,\de)$-flow} is a family of closed sets in $\R^N$ that is an $(\al,\de)$-flow when restricted to any open set $U\subset \R^N$ with compact closure. 
\end{definition}

\begin{corollary}[Global convergence]\label{thm_global_convergence}
There exists $\bar{\de}=\bar{\de}(\alpha)>0$ with the following property. If $\K^j$ is a sequence of $(\alpha,\delta_j)$-flows, $\de_j\leq\bar{\de}$,
in $P(p_j,t_j,\eta_jH^{-1}(p_j,t_j))$ with $\eta_j\to\infty$, then, after passing to a subsequence,
the $(\al,\de)$-flows $\hat{\K}^j$ that are obtained from $\K^j$ by parabolic rescaling $(p,t)\mapsto (H(p_j,t_j)(p-p_j),H^{2}(p_j,t_j)(t-t_j))$, restricting to
$B(0,\Lambda_j)$ for a suitable sequence $\Lambda_j\to\infty$, and discarding the connected components that don't contain the origin,
converge smoothly and globally to a limit $\K^\infty=\{K_t^\infty\subset \R^N\}_{t\in(-\infty,0]}$,
which is a generalized $(\al,\de)$-flow with convex time slices.
\end{corollary}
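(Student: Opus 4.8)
The plan is to derive Corollary~\ref{thm_global_convergence} as a packaging of the global curvature estimate (Theorem~\ref{thm_glob_curv_est}), the convexity estimate (Theorem~\ref{thm-intro_convexity_estimate}), and a diagonal/compactness argument, using the bound on the number of surgeries in a compact set that Theorem~\ref{thm_glob_curv_est} supplies. First I would normalize: after the stated parabolic rescaling $(p,t)\mapsto(H(p_j,t_j)(p-p_j),H^2(p_j,t_j)(t-t_j))$ the rescaled flows $\tilde{\K}^j$ are $(\al,\de_j)$-flows defined on $P(0,0,\eta_j)$ with $H(0,0)=1$ and $\eta_j\to\infty$. Fix any $\Lambda<\infty$; let $\K'^{j}_\Lambda$ be the flow obtained by restricting $\tilde{\K}^j$ to $B(0,\Lambda)$ and discarding components not containing the origin. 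Theorem~\ref{thm_glob_curv_est} applies for $j$ large (so that $\eta_j\geq\eta(\al,\Lambda)$) and gives
\begin{equation}
\sup_{P(0,0,\Lambda)\cap\D\K'^{j}_\Lambda}\abs{\nabla^\ell A}\leq C_\ell(\al,\Lambda)\qquad(\ell=0,1,2,\ldots).
\end{equation}

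Next I would extract the convergence. The uniform $C^\ell$ bounds on all derivatives of $A$, together with the $\al$-Andrews condition (which controls the location of the boundary and prevents collapsing), let me apply the standard compactness theory for smooth mean curvature flows to conclude that, along a subsequence, $\K'^{j}_\Lambda$ converges smoothly on compact subsets of $P(0,0,\Lambda)$ away from surgery regions; one must additionally account for the surgeries. Here the key point is that Proposition~\ref{lemma_sepsurg} forces surgery centers to be separated by a definite multiple of their scale, and Theorem~\ref{thm_glob_curv_est} with Definition~\ref{def_surgery} (item (2), the $C_\ell$ bounds on $\D K^\sharp$) forces the surgery scales contributing to $\K'^{j}_\Lambda$ to be bounded below and above in terms of $\al$ and $\Lambda$; hence only finitely many surgeries occur in $P(0,0,\Lambda)$, with a bound $N(\al,\Lambda)$. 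After passing to a further subsequence the number, locations, scales, and times of surgeries stabilize, so the limit is itself an $(\al,\de)$-flow on $P(0,0,\Lambda)$ — i.e.\ a concatenation of finitely many smooth $\al$-Andrews flows with surgeries and discarded components. Then I diagonalize over an increasing sequence $\Lambda=\Lambda_m\to\infty$ (and correspondingly choose $\Lambda_j\to\infty$ slowly enough that the restrictions are compatible) to obtain a limit $\K^\infty=\{K_t^\infty\}_{t\in(-\infty,0]}$ that is a generalized $(\al,\de)$-flow in the sense of Definition~\ref{def_generalized}.

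It remains to check the two qualitative properties of the limit. Smoothness and locally-$(\al,\de)$-flow structure come from the previous paragraph. Convexity of the time slices follows from the convexity estimate: the limit is ancient (defined on $(-\infty,0]$ after rescaling, since $\eta_j\to\infty$ pushes the initial time to $-\infty$), so for any boundary point $(q,\bar t)\in\D\K^\infty$ and any $r>0$ the flow $\K^\infty$ is defined on $P(q,\bar t,\eta(\eps,\al)r)$ for all $\eps>0$; taking $r\to\infty$, Theorem~\ref{thm-intro_convexity_estimate} forces $\la_1(q,\bar t)\geq 0$. Hence $K_t^\infty$ is convex for every $t$. One small technical point I would be careful about is that the convexity estimate is stated for $(\al,\de)$-flows on a genuine parabolic ball, while $\K^\infty$ is only a generalized flow; but since convexity is a local condition at $(q,\bar t)$ and $\K^\infty$ restricted to any bounded open set is an honest $(\al,\de)$-flow, the estimate still applies.

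The main obstacle I anticipate is the surgery bookkeeping in the compactness step: showing that only finitely many surgeries occur in each $P(0,0,\Lambda)$ and that along a subsequence their data converge, so that the limit is genuinely an $(\al,\de)$-flow rather than some weaker object. The finiteness comes from separation (Proposition~\ref{lemma_sepsurg}) plus the two-sided scale bound forced by Theorem~\ref{thm_glob_curv_est} and Definition~\ref{def_surgery}; the convergence of surgery times and centers is then a standard subsequence argument, and one should note that a surgery neck in the limit is a strong $\de$-neck because it is a smooth limit of strong $\de$-necks. Everything else — the smooth compactness, the diagonalization, and the convexity conclusion — is routine given the a priori estimates already established.
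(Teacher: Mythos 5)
Your overall scheme — normalize, apply Theorem \ref{thm_glob_curv_est} on parabolic balls $P(0,0,\Lambda)$, count surgeries via Proposition \ref{lemma_sepsurg}, stabilize surgery data along a subsequence, diagonalize, and finish with the convexity estimate — is essentially the paper's proof, and the part about convexity of the limit time slices is fine. However, there is one concrete gap in the surgery bookkeeping.

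You assert that Theorem \ref{thm_glob_curv_est} together with item (2) of Definition \ref{def_surgery} bounds the surgery scales in $P(0,0,\Lambda)$ both below \emph{and} above. Only the lower bound follows from those facts: the global curvature estimate gives $|A|\leq C_0(\al,\Lambda)$ on $P(0,0,\Lambda)\cap\partial\K'^j$, and since a surgery at scale $s$ produces curvature of order $s^{-1}$ there (strong $\delta$-neck backward, Definition \ref{def_surgery}(2) forward), this forces $s\geq c(\al,\Lambda)>0$. But an \emph{upper} bound on $s$ requires a \emph{lower} bound on curvature near the surgery, and neither Theorem \ref{thm_glob_curv_est} nor Definition \ref{def_surgery} provides one; a surgery at enormous scale $s_j\to\infty$ contributes arbitrarily \emph{small} curvature and is perfectly consistent with both. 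The correct argument for $\limsup_j s_\sharp(\K^j)<\infty$ uses the normalization $H(0,0)=1$ together with Proposition \ref{lemma_pseudo} (forward estimate after surgery): if $s_j\to\infty$ and some surgery of $\K^j$ occurs in a fixed $P(0,0,R)$, then the strong $\delta$-neck property backward and Proposition \ref{lemma_pseudo} forward give $H\lesssim s_j^{-1}\to 0$ in a space-time region of size $\sim s_j$ containing $(0,0)$, contradicting $H(0,0)=1$. (And if no fixed $P(0,0,R)$ contains surgeries for large $j$, one is in the smooth case and passes to a limit by \cite[Thm.\ 1.12]{HK} directly.) This upper bound is what lets the scales, centers and times of the finitely many surgeries stabilize along a subsequence; without it your "after a further subsequence the scales stabilize" step has nothing to rely on. You should replace the appeal to Theorem \ref{thm_glob_curv_est} for the upper scale bound by the argument via Proposition \ref{lemma_pseudo}.
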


In Corollary \ref{thm_global_convergence} the precise meaning of convergence is as follows.

\begin{definition}[Smooth convergence]\label{rem_smooth_convergence}
Let $\K^j$ be a sequence of $(\al,\de_j)$-flows, $\de_j\leq\bar{\de}$, with connected time slices, normalized such that $H(0,0)=1$, and defined in $P(0,0,\Lambda_j)$ with $\Lambda_j\to\infty$, and let $\K^\infty=\{K^\infty_t\subset\R^N\}_{t\in (-\infty,0]}$ be a generalized $(\al,\de)$-flow (Definition \ref{def_generalized}). We say that \emph{$\K^j$ converges to $\K^\infty$ smoothly and globally}, if $\K^j$ converges smoothly to $\K^\infty$ away from the regions modified by surgeries, and if near every point $p\in K^{\infty,-}_{t_\infty}$ that that is modified by a surgery at time $t_\infty$ the following condition is satisfied. There exists a sequence of surgery times $t_j\to t_\infty$ in $\K^j$ such that if we consider the forward and backward portions $\K^j_{+}=\{K^j_{t+t_j}-p\}_{t\geq 0^+}$ and $\K^j_{-}=\{K^j_{t+t_j}-p\}_{t\leq 0^-}$, and likewise $\K^\infty_{\pm}$, then $\K^j_\pm$ converges smoothly to $\K^\infty_\pm$ in a forward respectively backward parabolic neighborhood $P_{\pm}(0,0,\eps)$, for some $\eps>0$.
\end{definition}

\begin{proof}[Proof of Corollary \ref{thm_global_convergence}]
Let $\bar{\delta}=\bar{\delta}(\alpha)>0$ small enough such that the previous estimates apply.
Let $\K^j$ be a sequence of $(\alpha,\de_j)$-flows ($\de_j\leq\bar{\de}$) in $P(0,0,\eta_j)$ ($\eta_j\to\infty$) with $H(0,0)=1$.
Choose $\Lambda_j\to\infty$ slowly enough such that the conclusion of Theorem \ref{thm_glob_curv_est} holds for the flow $\K'^j$,
that is obtained from $\K^j$ by restricting to $B(0,\Lambda_j)$ and discarding the connected components that don't contain the origin.
We want to find a subsequence of $\K'^j$ that converges smoothly and globally.

We can assume that there is some $R<\infty$ such that $P(0,0,R)$ contains surgeries of $\K^j$ for large $j$, since
otherwise \cite[Thm. 1.12, Rmk. 3.5]{HK} allows us to pass to a smooth limit. Also, it must be the case that the surgery  scales $s_j=s_\sharp(\K^j)$ (see Definition \ref{def_alphadelta}) satisfy \eqref{eqn_scalebound} since otherwise Proposition \ref{lemma_pseudo} gives a contradiction with $H(0,0)=1$.

For each positive integer $k$, by Theorem \ref{thm_glob_curv_est}, inequality \eqref{eqn_scalebound}, and Proposition \ref{lemma_sepsurg},
the parabolic ball $P(0,0,k)$ contains at most some controlled number $N_k^j$ of surgeries of $\K'^j$, and their necks are of controlled size.
After passing to a subsequence, we can assume that $N_k^j$ equals some fixed number $N_k$, that the surgery times converge to some limiting surgery times given by a set $\mathcal{T}_k\subset (-\infty,0]$ with at most $N_k$ elements, and that the (pre and post) surgery time slices converge smoothly.
Let $\mathcal{Q}\subset (-\infty,0]$ be a countable dense set that is disjoint from $\cup_k\mathcal{T}_k$. Arguing as in \cite[Proof of Thm. 1.12, Step 7]{HK}, after passing to a subsequence there are convex sets $K^{\infty}_t$
such that the domains $X^{j}_{R,t}$ converge smoothly to 
$K^{\infty}_t\cap B(0,R)$ as $j\ra\infty$, for all $R<\infty$ and all $t\in\mathcal{Q}$.
Finally, putting everything together, namely the convergence at a dense set of times, the convergence at the surgery times, Theorem \ref{thm_glob_curv_est}, Proposition \ref{lemma_sepsurg} and the Andrews-condition, it follows that there exists a generalized $(\al,\de)$-flow $\K^\infty$ such that $\K'^j\to \K^\infty$ smoothly and globally, where the meaning of convergence is as in Definition \ref{rem_smooth_convergence}.  
\end{proof}

\section{Ancient solutions and standard solutions}\label{sec_ancientstandard}

We will now prove some structural results for uniformly $2$-convex ancient $\al$-Andrews flows, and results for the evolution of standard caps.
These results will be used later in the blowup analysis in Section \ref{section_existence}.

\subsection{Structure of uniformly $2$-convex ancient $\al$-Andrews flows}\label{sec_ancient}

In this section, we consider smooth ancient $\alpha$-Andrews flows $\{K_t\subset \R^N\}_{t\in (-\infty,T)}$, that are \emph{$\beta$-uniformly $2$-convex}, 
i.e. $\la_1+\la_2\geq \be H$ for some fixed $\be>0$.
Examples to keep in mind are the cylinder, the bowl soliton \cite{AltWu}, the sphere and the Angenent ovals \cite{white_nature,HH}.

We recall that ancient $\al$-Andrews flows are always convex \cite[Cor. 2.15]{HK}, and in fact automatically smooth until they become extinct \cite[Thm. 1.14]{HK}.
We will now discuss two structural results that are more specific to the uniformly $2$-convex case. First, non $\eps$-neck points are at
controlled distance from one another, unless the time slice is compact and
the points approximately realize the diameter.

\begin{proposition}[Non-neck points]\label{structure_compact}
For all $\varepsilon_1,\eps_2>0$ there exists $\underline{R}=\underline{R}(\eps_1,\eps_2,\alpha,\beta)<\infty$, such that if $\K$ is a $\beta$-uniformly 
$2$-convex ancient $\al$-Andrews flow and $p_1,p_2\in \D K_t$ are not strong $\eps_1$-neck points, then at least
one of the following holds:
\begin{enumerate}
\item $\max_iH(p_i)d(p_1,p_2)<\underline{R}$.
\item $\diam K_t\leq (1+\eps_2)d(p_1,p_2)$.
\end{enumerate}
\end{proposition}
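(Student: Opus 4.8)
The plan is to argue by contradiction via a blow-up, using the a priori estimates of Section~\ref{sec_apriori}. Suppose the assertion fails. Then there are $\eps_1,\eps_2>0$ and a sequence $\K^j$ of $\be$-uniformly $2$-convex ancient $\al$-Andrews flows with boundary points $p^j,q^j\in\D K^j_{t_j}$, neither a strong $\eps_1$-neck point, such that $R_j:=\max\{H(p^j),H(q^j)\}\,d(p^j,q^j)\to\infty$ while $\diam K^j_{t_j}>(1+\eps_2)\,d(p^j,q^j)$. After a time-translation assume $t_j=0$, and after relabelling assume $H(p^j)\ge H(q^j)$, so that $R_j=H(p^j)\,d(p^j,q^j)$. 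I would then rescale at $p^j$: the flows $\tilde\K^j=\{H(p^j)(K^j_{H(p^j)^{-2}t}-p^j)\}_t$ are again $\be$-uniformly $2$-convex ancient $\al$-Andrews flows, normalized so that $0\in\D\tilde K^j_0$, $\tilde H(0,0)=1$, and $0$ is not a strong $\eps_1$-neck point. Since ancient $\al$-Andrews flows are convex, the local curvature estimate (Theorem~\ref{loccurvest}) and the global convergence statement (Corollary~\ref{thm_global_convergence}) yield, after a rotation and passing to a subsequence, a smooth global limit $\K^\infty=\{K^\infty_t\subset\R^N\}_{t\le 0}$: a convex $\be$-uniformly $2$-convex ancient $\al$-Andrews flow with $H(0,0)=1$ and with $0\in\D K^\infty_0$ not a strong $(\eps_1/2)$-neck point (being a non-neck point is stable under smooth convergence). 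Writing $\tilde q^j:=H(p^j)(q^j-p^j)$, we have $|\tilde q^j|=R_j\to\infty$, and since the segments $[0,\tilde q^j]$ lie in the convex sets $\tilde K^j_0$, the limit $K^\infty_0$ contains a ray, hence is noncompact.

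The heart of the matter is the structure of $\K^\infty$. If $K^\infty_0$ contained a full line, the strong maximum principle for the second fundamental form would force $\K^\infty$ to split as $\R^k\times\K''$; uniform $2$-convexity allows at most $k=1$ (two flat directions would give $\lambda_1=\lambda_2=0<\be H$), and then $\K''$ is a compact uniformly convex ancient $\al$-Andrews flow, hence a round shrinking sphere by the rigidity of the pinching estimate, so that $\K^\infty$ would be a round shrinking cylinder --- contradicting that $0$ is not a strong $(\eps_1/2)$-neck point. Hence $K^\infty_0$ contains no line; being a noncompact convex body it is then a graph over a hyperplane with a single end, and uniform $2$-convexity together with the Andrews condition and ancientness force this end to be asymptotic to a straight round half-cylinder, with the set of non-strong-$(\eps_1/2)$-neck points of $\D K^\infty_0$ confined to a bounded cap. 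A further compactness argument over all such flows yields a uniform radius $R_0=R_0(\eps_1,\al,\be)$ bounding this cap.

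Finally I would transfer this picture back to the $\tilde\K^j$ and bring in the diameter hypothesis. For large $j$, $\tilde K^j_0$ is $C^\infty$-close to the capped straight half-cylinder $K^\infty_0$ on a ball of radius $\rho_j\to\infty$; in particular it consists of increasingly round tubes of controlled radius emanating from a bounded cap containing the origin. Since $\tilde q^j$ is not a strong $\eps_1$-neck point and $|\tilde q^j|\to\infty$, it cannot lie in that cap; convexity prevents the tube from flaring out (this would destroy uniform $2$-convexity) and forces its axis to turn by less than $\pi$ in total, hence to be asymptotically straight, so $\tilde q^j$ must cap off the far end of a long, nearly straight round tube that exhausts $\tilde K^j_0$. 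Thus $\tilde K^j_0$ is $o(1)$-close to a straight round capped cylinder with tips near $0$ and $\tilde q^j$, so that $\diam\tilde K^j_0=(1+o(1))\,|\tilde q^j|=(1+o(1))R_j$; rescaling back gives $\diam K^j_0=(1+o(1))\,d(p^j,q^j)$, contradicting $\diam K^j_{t_j}>(1+\eps_2)\,d(p^j,q^j)$ for $j$ large, provided $\eps_1$ was fixed small relative to $\eps_2$ at the outset.

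I expect the main obstacle to be the structural input of the second paragraph: establishing, using only the a priori estimates, convexity, and the splitting principle (rather than any fine classification of ancient solutions, which is not available here), that a noncompact $\be$-uniformly $2$-convex convex ancient $\al$-Andrews flow is asymptotic to a straight round half-cylinder with a uniformly bounded non-neck cap. Closely related is the delicate point in the last paragraph, where one must control the tube of $\tilde K^j_0$ far beyond the region where it is close to $\K^\infty$ --- using convexity and the local curvature estimate directly --- precisely enough to recover the sharp factor $1+\eps_2$.
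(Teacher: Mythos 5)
Your proposal normalizes the curvature at $p^j$ and then tries to deduce, via a global structure theory for noncompact ancient flows, that the rescaled domain $\tilde K^j_0$ is a straight capped round tube of diameter $(1+o(1))|\tilde q^j|$. The paper's proof takes a different and much shorter route: rescale so that $d(p_1^j,p_2^j)=1$, hence $\max_i H(p_i^j)\to\infty$, and pass to the pointed Hausdorff limit ancient flow, which becomes extinct at $T=0$. Lemma~\ref{lemma_ancient2} then makes the extinction-time slice a convex set of dimension at most $1$; since $\diam\hat K^\infty\ge 1+\eps_2>1\ge d(p_1^\infty,p_2^\infty)$, at least one of the two limit points lies in the interior of that $1$-dimensional set, and Lemma~\ref{lemma_limitneck} immediately forces the corresponding $p_i^j$ to be strong $\eps_1$-neck points for large $j$, a contradiction. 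Both non-neck points stay in a bounded picture, the flow degenerates entirely, and the $1+\eps_2$ factor is used only to guarantee interiority.

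Your version, by contrast, has genuine gaps in the second and third paragraphs. First, Proposition~\ref{lem_quantitative_one_ended} gives only that at a fixed large radius the cross-section is close to a neck disc; it does not give that the end is asymptotic to a \emph{straight} round half-cylinder, nor that the non-neck region is confined to a cap of uniformly bounded size, and neither statement is otherwise proved in the paper. Second, the assertion "convexity prevents the tube from flaring out (this would destroy uniform $2$-convexity)" is not correct: a slowly widening convex tube can satisfy both $\alpha$-Andrews and $\beta$-uniform $2$-convexity pointwise; what rules out flaring is ancientness through the vanishing asymptotic volume ratio, a global property that you have only for the limit $\K^\infty$ and which does not transfer to $\tilde K^j_0$ at the scale of $|\tilde q^j|\to\infty$, far beyond the radius $\rho_j$ where smooth convergence holds. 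Third, even granting asymptotic cylindricality, $\diam\tilde K^j_0=(1+o(1))|\tilde q^j|$ requires the \emph{total} turning of the tube axis over its entire length to be $o(1)$, and nothing in the argument controls the axis outside the compact region captured by the limit. These are precisely the difficulties that the paper sidesteps by normalizing the separation $d(p_1^j,p_2^j)$ rather than the curvature.
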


Second, as suggested by the bowl soliton, non-compact non-cylindrical solutions have a single cylindrical end in a precise quantitative sense.

\begin{proposition}[Quantitative one-endedness]\label{lem_quantitative_one_ended}
For all $\eps_1,\eps_2>0$ there exists $\underline{R}=\underline{R}(\eps_1,\eps_2,\al,\beta)<\infty$, such 
that if $\K$ is a $\beta$-uniformly $2$-convex
non-compact ancient $\al$-Andrews flow and $p\in \D K_t$ is not a strong $\eps_1$-neck point, then for any $R> \underline{R}$, 
there exists a strong $\eps_2$-neck point $q\in \D K_t \cap  S(p,RH^{-1}(p,t))$ such that 
the intersection
$K_t\cap S(p,RH^{-1}(p,t))$ is contained in the $\eps_2 H^{-1}(q,t)$ neighborhood
of a cross-sectional disc of the solid $\eps_2$-neck at $q$.
\end{proposition}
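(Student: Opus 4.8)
\textbf{Proof plan for Proposition \ref{lem_quantitative_one_ended}.}

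The plan is to argue by contradiction and contTheorem (compactness). Suppose the statement fails. Then for some $\eps_1,\eps_2>0$ there is a sequence of $\beta$-uniformly $2$-convex non-compact ancient $\al$-Andrews flows $\K^j$, points $p_j\in\D K^j_{t_j}$ that are not strong $\eps_1$-neck points, and radii $R_j\to\infty$, such that the conclusion fails on $S(p_j,R_jH^{-1}(p_j,t_j))$ for each $j$. After parabolically rescaling so that $H(p_j,t_j)=1$ and $p_j=0$, $t_j=0$, we may pass to a subsequence. By the structural results for ancient $\al$-Andrews flows (convexity, \cite[Cor. 2.15]{HK}, and the global curvature estimates), the rescaled flows subconverge smoothly and globally to a limit ancient $\al$-Andrews flow $\K^\infty$ through $0\in\D K^\infty_0$; since each $p_j$ is not a strong $\eps_1$-neck point, the origin in $\K^\infty$ is also not a strong $\eps_1/2$-neck point (neck-closeness is an open condition), so $\K^\infty$ is non-cylindrical. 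I would also verify that $\K^\infty$ is non-compact: if it were compact, then by the convergence and by Proposition \ref{structure_compact} applied at scale $R_j$ one would get that $\diam K^j_{t_j}$ is comparable to $R_j H^{-1}(p_j,t_j)\to\infty$ in rescaled terms, and the points realizing the diameter are non-neck, so the time slices $K^j_{t_j}$ would be, after further rescaling, converging to a compact convex limit, contradicting that $K^j_{t_j}$ is non-compact. Hence $\K^\infty$ is a $\beta$-uniformly $2$-convex, non-compact, non-cylindrical ancient $\al$-Andrews flow.

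The second step is to extract the structural fact about such limits: they have exactly one cylindrical end, and far out along it the time slice is close to a single thin cross-sectional disc. This is essentially the "bowl-type" structure. Here I would use: (i) convexity of $K^\infty_0$; (ii) non-compactness, so there is an asymptotic direction and $K^\infty_0$ contains a ray; (iii) $2$-convexity together with the Andrews condition, which forces the asymptotic cone of $K^\infty_0$ to be a ray (if it had interior, one could find a strong $\eps_1$-neck splitting off, contradicting uniform $2$-convexity at infinity — more precisely, a blowdown of $K^\infty_0$ splits off $\R^{N-1}$ by Hamilton's convexity / the strong maximum principle for $\la_1+\la_2$, and combined with the splitting theorem for mean-convex flows one sees the ends are all cylindrical, and there can be only one by the argument in \cite{HK}). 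So on the sphere $S(0,R)$ of large radius $R$, $K^\infty_0\cap S(0,R)$ is contained in a small neighborhood of a single cross-sectional disc of a genuine $\eps_2/2$-neck at some point $q_\infty\in\D K^\infty_0\cap S(0,R)$, with the neighborhood size controlled by $\eps_2 H^{-1}(q_\infty,0)/2$, once $R>\underline R(\eps_1,\eps_2,\al,\beta)$ for a suitable $\underline R$.

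Finally I would transfer this back to the sequence. Fix such an $R$; by smooth and global convergence $\K^j\to\K^\infty$, for $j$ large the sphere $S(p_j,RH^{-1}(p_j,t_j))$ in the rescaled flow is uniformly close to $S(0,R)$, the point $q_\infty$ is approximated by strong $\eps_2$-neck points $q_j\in\D K^j_{t_j}\cap S(p_j,RH^{-1}(p_j,t_j))$ (the $\eps_2/2$-neck at $q_\infty$ becomes a strong $\eps_2$-neck after perturbation), and $K^j_{t_j}\cap S(p_j,RH^{-1}(p_j,t_j))$ lies in the $\eps_2 H^{-1}(q_j,t_j)$-neighborhood of the corresponding cross-sectional disc. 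This contradicts the assumption that the conclusion failed for all $j$, completing the argument. The main obstacle is Step 2 — pinning down that a $\beta$-uniformly $2$-convex non-compact non-cylindrical ancient convex flow has a \emph{single} cylindrical end, with quantitative control on how the cross-section degenerates to a disc far out; this is where the uniform $2$-convexity, the Andrews condition, and the strong maximum principle for $\la_1+\la_2$ all have to be combined carefully, and one should be slightly cautious that the relevant structure theory for ancient $\al$-Andrews flows (rather than smooth ancient mean-convex flows) is available from \cite{HK} and Section \ref{sec_ancient}.
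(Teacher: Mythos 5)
Your compactness setup has a genuine logical gap in the transfer step. You normalize $H(p_j,t_j)=1$ and pass to a smooth global limit $\K^\infty$; this gives you control of the flow near $p_j$ at scales comparable to $H^{-1}(p_j,t_j)$, but the hypothesized failure occurs on $S(p_j,R_jH^{-1}(p_j,t_j))$ with $R_j\to\infty$. When you ``fix such an $R$'' and argue that for $j$ large the conclusion holds at radius $R$, this does not contradict the negated statement, which only asserts failure at radius $R_j\to\infty$. Smooth convergence on bounded parabolic balls tells you nothing about the time slice at the escaping scale $R_j$, so you cannot close the contradiction. You would need to rescale by $R_j^{-1}$ (not by $H(p_j,t_j)$) so that the sphere under scrutiny is pinned at unit radius.

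This is precisely what the paper does, and it also sidesteps the issue you correctly flag as the ``main obstacle'': pinning down a quantitative one-endedness theorem for non-compact non-cylindrical ancient $\al$-Andrews flows. That structure theorem is essentially the statement being proved, so invoking it makes your plan circular. The paper instead does a two-stage blowdown. First, rescale the time-zero slices by $R_j^{-1}$; by Lemma \ref{lemma_ancient2} the Hausdorff limit of the slices is convex of dimension at most $1$, and Lemma \ref{lemma_limitneck} forces the origin to be an endpoint (since it is not a neck point), so the limit is a ray $K^\infty$. In particular, if $x_j$ is where a ray in $K^j_0$ from the origin hits $S(0,R_j)$ and $q_j$ is a farthest point of $K^j_0\cap S(0,R_j)$ from $x_j$, then $d(x_j,q_j)/R_j\to 0$. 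Second, recenter at $x_j$ and rescale by $d(x_j,q_j)^{-1}$: the new limit contains a line and is uniformly $2$-convex, hence is a round shrinking cylinder (as in the proof of Proposition \ref{structure_compact}), and the rescaled sphere converges to a hyperplane orthogonal to the axis, contradicting the failure of the conclusion at $R_j$. The two blowdown scales ($R_j^{-1}$ followed by $d(x_j,q_j)^{-1}$, the latter chosen adaptively to match the cross-sectional width) are the key mechanism that lets one see the cross-sectional disc at scale $R_j$, and your single rescaling by $H(p_j,t_j)$ cannot reproduce this.
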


For the proofs we need the following two lemmas.

\begin{lemma}\label{lemma_ancient2}
If $\K$ is a uniformly $2$-convex ancient $\al$-Andrews flow that becomes extinct at a finite time $T<\infty$, then the final
time slice $K_T$ is a convex set of dimension at most $1$.\footnote{Conjecturally, $K_T$ is either a point or the entire real line, c.f. \cite{white_nature}.} 
\end{lemma}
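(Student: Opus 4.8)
The plan is to argue by contradiction using a blowup at an ``interior'' direction, combined with the convexity and strict maximum principle machinery that is already available for ancient solutions. Recall from \cite[Cor.~2.15]{HK} that an ancient uniformly $2$-convex $\al$-Andrews flow has convex time slices, so $K_T$ is a closed convex subset of $\R^N$; the only thing to prove is the dimension bound, i.e. that $K_T$ contains no $2$-dimensional affine ball. Suppose it does: then there is a point $p\in K_T$ and a $2$-plane $\Pi$ with a small disc $\bar D^2_r\subseteq K_T\cap(p+\Pi)$. The strategy is to show this forces $K_t$ to contain, for $t$ slightly less than $T$, a region on which $\frac{\la_1}{H}$ (equivalently $\la_1$, after normalization) is forced down, contradicting either the convexity estimate (Theorem \ref{thm-intro_convexity_estimate}) applied at a well-chosen scale, or the strict maximum principle for $\frac{\la_1}{H}$ used in the style of \cite[Proof of Thm.~1.8]{HK}.

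More concretely, here is the sequence of steps I would carry out. First, since $K_T$ is convex with nonempty relative interior inside $p+\Pi$ but the flow becomes extinct exactly at $T$, I would examine the behavior of $H$ and of the inscribed radius as $t\nearrow T$: mean convexity ($H>0$) together with $K_t\supseteq K_T$ and finite extinction means the boundary $\D K_t$ must be pinching off, so $\sup_{\D K_t}H\to\infty$. Second, I would use the $\al$-Andrews condition: at a boundary point $p_t\in\D K_t$ realizing large $H$, there is an interior ball of radius $\al/H(p_t)$; but near the disc $\bar D^2_r\subseteq K_T$ the set $K_t$ is ``at least $2$-dimensionally fat'' up to time $T$, which is incompatible with the domain disappearing in all directions. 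The cleanest way to package this is: pick a sequence $t_j\nearrow T$ and points $x_j\in\D K_{t_j}$ with $H(x_j,t_j)=:Q_j\to\infty$ chosen to be almost-maximal in a parabolic neighborhood (a point-selection as in Remark \ref{alternative_proof}), and blow up by $Q_j$. By the local curvature estimate (Theorem \ref{loccurvest}) and the convexity estimate (Theorem \ref{thm-intro_convexity_estimate}), a subsequence converges smoothly to an ancient, convex, uniformly $2$-convex limit flow $\K^\infty$ through the origin with $H(0,0)=1$. Third — and this is where the $2$-dimensional disc enters — the presence of $\bar D^2_r\subseteq K_T$ at a definite distance-scale, while the blowup scale $Q_j^{-1}\to 0$, forces the limit $K^\infty_0$ to contain a full affine $2$-plane's worth of directions near infinity in a controlled way (a line-splitting argument, in the spirit of Claim \ref{claim_halfspace}): the rescaled sets $Q_j(K_{t_j}-x_j)$ contain larger and larger pieces of $\bar D^2_r$ pushed to infinity, so $K^\infty_0$ splits off at least two lines, hence $K^\infty_0 = \R^2\times C$ for some convex $C$. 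But a convex set splitting off $\R^2$ has $\la_1=\la_2=0$ on its boundary, contradicting either uniform $2$-convexity $\la_1+\la_2\geq\be H$ with $H=1>0$ at the origin, or — if one prefers to avoid the limit being exactly a product — contradicting the strict maximum principle as in \cite[Proof of Thm.~1.8]{HK} since $\frac{\la_1}{H}$ would attain an interior minimum value $0$.

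The main obstacle I anticipate is making the third step — the line-splitting from the persistent low-dimensional piece of $K_T$ — fully rigorous: one must control the geometry of $\D K_{t_j}$ near $\bar D^2_r$ uniformly as $t_j\nearrow T$ to conclude that the rescalings really do capture two independent flat directions, rather than the disc merely sitting deep inside $K_t$ and contributing nothing visible to a blowup at a far-away high-curvature point. The fix is to instead blow up \emph{at the scale of the disc}: choose basepoints on $\D K_{t_j}$ that lie within bounded distance (independent of $j$) of a fixed point $p\in\operatorname{relint}(\bar D^2_r)$, with $H$ there of size $\sim 1/r$ (such points exist because $K_t$ is convex, contains $\bar D^2_r$, and has bounded diameter while being extinct just after $T$, forcing $\D K_t$ to pass near $p$). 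Then no rescaling is needed, the convexity estimate gives $\la_1\geq -\eps/r$ for all small $\eps$ hence $\la_1\geq 0$ at such points with $H\sim 1/r$, and a direct argument with the inscribed/circumscribed balls of the Andrews condition plus the trapped disc shows $\la_1=\la_2=0$ there, again contradicting $\la_1+\la_2\geq\be H>0$. Either route reduces everything to facts already in the excerpt; the delicate point is purely the elementary convex-geometry input that a convex set with a $2$-disc in its final slice, shrinking to extinction, must have boundary curvature degenerating in two directions somewhere — which is exactly the statement being proved, so the contradiction closes.
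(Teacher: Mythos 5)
The paper's proof of this lemma is a two-line citation to \cite[Thm.~1.14]{HK} and \cite[Refinement of Thm.~1.15]{HK}, so your self-contained blow-up argument is a genuinely different route, and the broad strategy --- extract a line-splitting at a high-curvature point and contradict $\be$-uniform $2$-convexity --- is the right one. But as written your argument has a genuine gap, and the fix you suggest does not work.

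The gap is in the step ``the rescaled sets $Q_j(K_{t_j}-x_j)$ contain larger and larger pieces of $\bar D^2_r$ pushed to infinity, so $K^\infty_0$ splits off at least two lines.'' With $x_j$ chosen by a point-selection for $|A|$, nothing forces $d(x_j,p)\to 0$; if $d(x_j,p)\ge D>0$ along the sequence, then after rescaling by $Q_j$ the disc sits at distance at least $Q_jD\to\infty$ with radius $Q_jr\to\infty$, and the solid cone from $0$ to the rescaled disc has opening angle $\arctan(r/D)<\pi/2$, a \emph{fixed} angle. A convex limit that merely contains a cone of fixed opening angle need not split off a line (a paraboloid does not), so the asserted splitting is not justified. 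Your proposed repair --- basepoints within bounded distance of $p$ with $H\sim 1/r$, so that ``no rescaling is needed'' --- is also incorrect: for any $q_j\in\D K_{t_j}$ with $q_j\to p$, the interior Andrews ball of radius $\al/H(q_j,t_j)$ tangent at $q_j$ lies in $K_{t_j}$; if $H$ stayed bounded these balls would subconverge to a ball of positive radius inside $K_T=\bigcap_{t<T}K_t$, which has empty interior because the flow becomes extinct. So $H(q_j,t_j)\to\infty$, and there are no ``$H\sim 1/r$'' basepoints near $p$. The correct repair is to take $q_j\in\D K_{t_j}$ to be the point \emph{nearest} $p$: then $d_j:=d(p,q_j)\to 0$ and $B(p,d_j)$ is an interior tangent ball, so $\nu_j:=(q_j-p)/d_j$ is the outward unit normal at $q_j$. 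The supporting half-space at $q_j$ contains $\bar D^2_r$, which gives $|\langle w,\nu_j\rangle|\le d_j/r\to 0$ for every unit $w$ in the disc $2$-plane $\Pi$, so $\nu_j$ subconverges to some $\nu_\infty\perp\Pi$; and $Q_j:=H(q_j,t_j)\to\infty$ as above. Now the decisive ratio is $(Q_jr)/(Q_jd_j)=r/d_j\to\infty$: the solid cone from $0$ to the rescaled disc opens up to the half-space $\{\langle y,\nu_\infty\rangle\le 0\}$ of the $3$-dimensional subspace spanned by $\Pi$ and $\nu_\infty$, so by convexity and closedness the limit $\hat K^\infty_0$ contains all of $\Pi$ and hence splits off $\R^2$, forcing $\la_1=\la_2=0$ on $\D\hat K^\infty_0$ while $H(0,0)=1$. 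This contradicts $\la_1+\la_2\ge\be H$ and closes the argument you were aiming at.
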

\begin{proof}[Proof of Lemma \ref{lemma_ancient2}]
This follows immediately by combining \cite[Thm. 1.14]{HK} and \cite[Refinement of Thm. 1.15]{HK}.\footnote{It is of course possible to phrase this argument entirely in the smooth setting.}
\end{proof}

\begin{lemma}\label{lemma_limitneck}
Suppose  $\{\K^j\}$ is a sequence of $\beta$-uniformly $2$-convex ancient $\al$-Andrews flows, 
for which the time-slices $K^j_0$ Hausdorff-converge to a $1$-dimensional convex set $K^\infty$ containing $p$ as an interior point.
Then for any $\de>0$, any sequence of points $p_j\in \D K^j_0$ with
$p_j\ra p$ consists of strong $\de$-neck points for large $j$.
\end{lemma}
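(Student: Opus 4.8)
\textbf{Proof proposal for Lemma \ref{lemma_limitneck}.}

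The plan is to argue by contradiction using a compactness/blowup argument. Suppose the conclusion fails: then there is a fixed $\de>0$, a subsequence (still denoted $\K^j$), and points $p_j\in\D K^j_0$ with $p_j\to p$ such that each $p_j$ is \emph{not} a strong $\de$-neck point. The key structural inputs are: (i) ancient $\al$-Andrews flows are convex and smooth until extinction, so each $\K^j$ is a genuine smooth convex ancient solution; (ii) the time-zero slices $K^j_0$ converge in the Hausdorff sense to the $1$-dimensional convex set $K^\infty$ with $p$ in its relative interior. Since $K^\infty$ is $1$-dimensional and $p$ is an interior point, near $p$ the sets $K^j_0$ are becoming long and thin, i.e. the mean curvature $H(p_j,0)$ must blow up: otherwise the $\al$-Andrews condition would force $K^j_0$ to contain balls of radius $\sim \al/H(p_j,0)$ of definite size near $p$, contradicting that the limit has empty interior as a subset of $\R^N$. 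Let $Q_j=H(p_j,0)\to\infty$.

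Next I would rescale: set $\tilde\K^j=\{Q_j(K^j_{Q_j^{-2}t}-p_j)\}_{t\le 0}$. Each $\tilde\K^j$ is again a $\beta$-uniformly $2$-convex ancient $\al$-Andrews flow, now normalized so that $H=1$ at the spacetime origin. By the local curvature estimate (Theorem \ref{loccurvest}; or directly \cite[Thm. 1.8]{HK} in the surgery-free case) together with the convexity estimate, we get uniform curvature bounds on parabolic balls, so after passing to a subsequence $\tilde\K^j$ converges smoothly and globally (Corollary \ref{thm_global_convergence}, or its surgery-free predecessor) to a smooth nonflat convex ancient $\al$-Andrews flow $\tilde\K^\infty$ which is $\beta$-uniformly $2$-convex, with $H(0,0)=1$. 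The crucial point is to identify $\tilde\K^\infty$: the time-zero slice $\tilde K^\infty_0$ must split off a line. Indeed, rescaling by $Q_j\to\infty$ a family whose Hausdorff limit $K^\infty$ is $1$-dimensional and contains $p$ as an interior point forces the blowup limit slice to contain a full line through the origin (the rescaled sets contain longer and longer thin segments through $p_j$ in the direction of $K^\infty$, while pinching in the transverse directions). By the splitting theorem for convex ancient solutions — a convex ancient mean curvature flow whose time slice contains a line splits isometrically as $\R\times(\text{lower-dim'l ancient flow})$ — and by $\beta$-uniform $2$-convexity, which rules out splitting off more than one line, $\tilde\K^\infty$ must be the round shrinking cylinder $\bar D^{N-1}\times\R$. (Here I would invoke the structure results for uniformly $2$-convex ancient flows; if one prefers to avoid that, one can argue directly: the transverse slice is a compact convex ancient solution that is $1$-dimensional-limiting... but the cleanest route is the cylinder rigidity.) In particular $\tilde\K^\infty$ is, near the spacetime origin, a strong $\de/2$-neck.

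Finally, smooth convergence $\tilde\K^j\to\tilde\K^\infty$ on a fixed parabolic neighborhood of the origin (including the full backward time interval $(-1,0]$ after rescaling, which is legitimate since ancient flows exist for all backward time and the curvature bounds propagate) implies that for $j$ large the rescaled point $0\in\D\tilde K^j_0$ — equivalently $p_j\in\D K^j_0$ — is a strong $\de$-neck point, contradicting our assumption. This completes the proof. \textbf{The main obstacle} is the identification of the blowup limit as the cylinder: one must rule out other $\beta$-uniformly $2$-convex ancient limits (bowl soliton, Angenent ovals, etc.) whose slices are \emph{not} $1$-dimensional-limiting, and the way to do this is precisely to use that the \emph{unrescaled} Hausdorff limit is $1$-dimensional, which forces a line to split off in the blowup; combined with $\beta$-uniform $2$-convexity (at most one line splits off) this pins down the cylinder. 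Everything else is a routine application of the curvature estimates and smooth compactness established earlier in the paper.
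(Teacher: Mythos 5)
Your proposal is correct and follows essentially the same route as the paper's proof: show $H(p_j)\to\infty$ via the Andrews condition and the one-dimensionality of $K^\infty$, rescale by $H(p_j)$, use the Hausdorff-limit picture together with convexity to show the blowup slice contains a line, and then invoke cylinder rigidity for uniformly $2$-convex ancient $\alpha$-Andrews flows to conclude. The only cosmetic differences are that the paper argues directly rather than by contradiction, and makes the line-in-the-limit step slightly more explicit by choosing points $x^\pm\in K^\infty$ on either side of $p$, lifting them to $x_j^\pm\in K_0^j$, and observing that the angle between the segments $\overline{x_j^-p_j}$ and $\overline{p_jx_j^+}$ tends to $\pi$; your parenthetical caveat about identifying the cylinder is handled in the paper by citing \cite[Lemma 3.14]{HK}, which is the same tool you point to.
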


\begin{remark}
 We point out that $K^\infty$ can be strictly smaller than the final time slice $K^\infty_0$ of the space-time Hausdorff limit $\K^j\to\K^\infty$.
For example, if $\K^j$ is a blowdown sequence of the bowl soliton centered at the tip $(0,0)$, then $K^\infty$ is a half-line, but $K^\infty_0$ is the whole line (since the limit $\K^\infty$ is a shrinking cylinder that becomes extinct at time $0$).
\end{remark}

\begin{proof}[Proof of Lemma \ref{lemma_limitneck}]
We have $H(p_j)\ra \infty$ as $j\ra \infty$, for otherwise $K^\infty$
would be a domain with nonempty interior. Choose $x^\pm\in K^\infty$ such that $p$ lies in the interior of the 
segment $\ol{x^-x^+}$, and choose $x_j^\pm\in K^j_{t_j}$
such that $x_j^\pm\ra x^\pm$. The segments $\ol{x_j^- p_j}$ and $\ol{p_j x_j^+}$ are contained in $K^j_{t_j}$ by convexity, and the angle between them converges to $\pi$.
Parabolically rescaling to normalize
$H(p_j)$ and passing to a limit \cite[Thm. 1.12]{HK}, we obtain an ancient uniformly $2$-convex $\al$-Andrews flow
$\K$ that contains a line. Using \cite[Lemma 3.14]{HK} it follows that $\K$ is a round cylindrical flow.
Thus $p_j$ is a strong $\de$-neck point for large $j$. 
\end{proof}

\begin{proof}[Proof of Proposition \ref{structure_compact}]
Assume towards a contradiction that there are a sequence $\{\K^j\}$ of $\beta$-uniformly $2$-convex ancient 
$\al$-Andrews flows, and sequences of points $p_1^j,p_2^j\in K_0^j$ which are not strong $\eps_1$-neck points,
such that 
$\max_iH(p^j_i)d(p^j_1,p^j_2)\ra \infty$,
and $\diam K_0>(1+\eps_2)d(p^j_1,p^j_2)$
for all $j$.  Rescaling so that $d(p^j_1,p^j_2)=1$ we get $\max_iH(p_i^j)\ra \infty$.
After passing to a subsequence, we may assume that $p^j_i\ra p^\infty_i$
for $i=1,2$, and that $\hat{\K}^j$ converges, by \cite[Thm 1.14]{HK}, to an ancient $\al$-Andrews flow
$\hat{\K}^\infty$ which goes extinct at $T=0$, where $\diam \hat{K}^\infty\geq 1+\eps_2$.
By Lemma \ref{lemma_ancient2}, at least one of the points $p^\infty_1,p^\infty_2$ must be an interior point. Thus, Lemma \ref{lemma_limitneck} gives a contradiction to the assumption that $\{p_1^j,p_2^j\}$ are not strong $\eps_1$-neck points.
\end{proof}

\begin{proof}[Proof of Proposition \ref{lem_quantitative_one_ended}]
Assume towards a contradiction that there are a sequence
$\{\K^j\}$ of $\beta$-uniformly $2$-convex non-compact ancient $\al$-Andrews flows, and a sequence
$R_j\ra \infty$, such that $(0,0)\in \D K^j_0$ is not a strong $\eps_1$-neck point, $H(0,0)=1$, 
and $\D K^j_0\cap S(0,R_j)$ does not contain any point with the asserted property.

By non-compactness and convexity there is a ray in $K^j_0$ starting at the origin. Let $x_j$ be the point where the ray intersects $S(0,R_j)$, and let $q_j$ be a point in 
$K^j_0\cap S(0,R_j)$ with maximal distance from $x_j$; thus we have $q_j\in \D K^j_0$.
We claim that 
$\frac{d(x_j,q_j)}{R_j}\ra 0$; indeed, this follows by 
rescaling by $R_j^{-1}$ and passing to a limit \cite[Thm 1.14]{HK}, and using the fact that the limit $K^\infty$ of the time zero slices is non-compact, convex, and $1$-dimensional (since it is contained in $K^\infty_0$, the time zero slice of the space-time limit $\K^\infty$, which is at most $1$-dimensional by Lemma \ref{lemma_ancient2}).
Using Lemma \ref{lemma_limitneck}, we see that $0$ must be an endpoint, and thus that $K^\infty$ is a ray starting at $0$.

After shifting $x_j$ to the origin,  
parabolically rescaling by $d(x_j,q_j)^{-1}$,
and passing to  a subsequence, we obtain a new sequence $\hat\K^j$ which 
strongly Hausdorff converges to an ancient $\al$-Andrews flow 
$\hat \K^\infty$. Note that $\hat K^\infty_0$ must actually be smooth, since a convex set of dimension $1$ cannot contain $2$ perpendicular segments. Since
$\hat\K^\infty$ contains a line and is uniformly $2$-convex, it must be a round cylindrical flow, c.f. the proof of Proposition \ref{structure_compact}.
Moreover, $(S(0,R_j),x_j)$ converges after rescaling by $d(x_j,q_j)^{-1}$ to a hyperplane orthogonal to the axis of the cylinder, and we get a contradiction.
\end{proof}

\begin{remark}\label{remark_one_ended_cpt}
The proof of Proposition \ref{lem_quantitative_one_ended} also works if the assumption that the flow is noncompact is replaced by the assumption that there exists a point in $K_t$ with distance from $p$ at least $(1+\eps)R$.
\end{remark}

For later use (namely for the proof of Claim \ref{claim_sepclaim}), we also prove that the ratio between intrinsic and extrinsic distance is controlled.

\begin{proposition}[Intrinsic distance]
\label{lem_quasi_convex}
For every $R<\infty$, there is an $L=L(R,\al)<\infty$ such that 
for every ancient  $\al$-Andrews flow $\K$ and every point $(p,t)\in\D K_t$, any point
$x\in \D K_t\cap B(p,RH^{-1}(p,t))$ can be joined to $p$ by a path in $\D K_t$
of length at most $LH^{-1}(p,t)$.
\end{proposition}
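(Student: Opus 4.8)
The plan is to argue by contradiction using the compactness theory for ancient $\al$-Andrews flows, much as in the proofs of Proposition~\ref{structure_compact} and Proposition~\ref{lem_quantitative_one_ended}. Suppose the statement fails for some $R<\infty$; then there is a sequence $\K^j$ of ancient $\al$-Andrews flows, points $(p_j,t_j)\in\D K^j_{t_j}$, and points $x_j\in\D K^j_{t_j}\cap B(p_j,RH^{-1}(p_j,t_j))$ such that the intrinsic distance from $x_j$ to $p_j$ in $\D K^j_{t_j}$ is at least $jH^{-1}(p_j,t_j)$. After parabolically rescaling to normalize $H(p_j,t_j)=1$ and translating $(p_j,t_j)$ to $(0,0)$, I may pass to a subsequence and invoke the compactness theorem \cite[Thm.~1.14]{HK} (together with the fact that ancient $\al$-Andrews flows are smooth and convex, \cite[Cor.~2.15, Thm.~1.14]{HK}) to obtain a limit ancient $\al$-Andrews flow $\K^\infty$ with $0\in\D K^\infty_0$, $H(0,0)=1$, and $x_j\to x_\infty\in\D K^\infty_0\cap\bar B(0,R)$.

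First I would establish a uniform lower bound $H\geq c(R,\al)>0$ on $\D K^j_{t_j}\cap B(0,2R)$. This is where the limit's smoothness and convexity are used: on the limit $\K^\infty$, the time-zero slice $K^\infty_0$ is a smooth convex body (by \cite[Thm.~1.14]{HK}), so $H>0$ on any bounded portion of $\D K^\infty_0$ — in particular $H$ is bounded below by a positive constant on $\D K^\infty_0\cap\bar B(0,2R)$ — and smooth convergence then transfers this lower bound, for $j$ large, to $\D K^j_{t_j}\cap B(0,2R)$. Next I would use the local curvature estimate \cite[Thm.~1.8]{HK} (or directly the smooth convergence) to get uniform bounds on $|A|$ and hence on the extrinsic geometry of $\D K^j_{t_j}$ in $B(0,2R)$. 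With two-sided curvature control on a hypersurface portion containing $0$ and $x_j$, one gets that $\D K^j_{t_j}\cap B(0,2R)$ is a uniformly bi-Lipschitz graphical chart, so the intrinsic and extrinsic metrics on $\D K^j_{t_j}\cap B(0,\tfrac{3}{2}R)$ are comparable with constants depending only on $R$ and $\al$; in particular $d_{\D K^j_{t_j}}(0,x_j)\leq L(R,\al)$, contradicting $d_{\D K^j_{t_j}}(0,x_j)\geq j$ for $j$ large.

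The main obstacle is ensuring that the path joining $0$ to $x_j$ \emph{stays inside the controlled region} $B(0,2R)$: a priori a short intrinsic path could wander far from $B(0,R)$ where we have no estimates. Convexity handles this cleanly. Since $K^j_{t_j}$ is convex and $0,x_j\in\D K^j_{t_j}$, the geodesic segment $[0,x_j]\subset\R^N$ lies in $K^j_{t_j}$, and the nearest-point projection $\Pi$ onto $K^j_{t_j}$ is $1$-Lipschitz; composing $\Pi$ with a short transverse flow (or, more directly, using that the boundary of a convex body is a Lipschitz graph over each supporting hyperplane) produces a path in $\D K^j_{t_j}$ from $0$ to $x_j$ contained in the ball $B(0,CR)$ for a universal $C$. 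Alternatively, one can project radially from an interior ball-center guaranteed by the $\al$-Andrews condition at scale $H^{-1}(0,0)=1$, which keeps the path within a controlled distance of $B(0,R)$ by the exterior ball bound. Either way the path never leaves the region where the curvature bound from the previous step applies, and the length estimate follows; this closes the contradiction and proves the proposition.
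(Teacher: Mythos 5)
Your high-level strategy — rescale, pass to a subsequential smooth limit using the compactness theory for ancient $\al$-Andrews flows, and derive a contradiction — is exactly the paper's strategy. But the step where you extract the finite-length path has a genuine gap, and the paper closes it differently.

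You claim that uniform two-sided curvature control on $\D K^j_{t_j}\cap B(0,2R)$ makes it ``a uniformly bi-Lipschitz graphical chart'' so that intrinsic and extrinsic distances are comparable. That implication is false: a hypersurface piece with $c\le H\le |A|\le C$ need not be a single graph, and even for a convex body (e.g.\ a flat ellipsoid normalized so $H=1$ at the basepoint) the portion in $B(0,2R)$ can wrap around; the intrinsic/extrinsic ratio is then not controlled by the curvature bounds alone but by global geometry. Your patch via projection is also not airtight: the nearest-point projection onto $K^j_{t_j}$ is the identity on the segment $[0,x_j]$ (which already lies inside $K^j_{t_j}$), ``composing with a short transverse flow'' is not specified, and radial projection from the interior Andrews-ball center need not keep the image near $B(0,R)$ for an unbounded convex time slice (a ray from the center through an interior point of $[0,x_j]$ can travel far, or even fail to meet $\D K$, before hitting the boundary). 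So as written, the argument does not produce the short path you need.

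The paper's proof sidesteps all of this. After passing to the subsequential limit $\K^\infty$ via \cite[Thm.\ 1.12]{HK}, it observes only two facts: the time slices $K^\infty_t$ are convex, and $K^\infty_0$ cannot be a slab (the $\al$-Andrews condition rules this out). Hence $\D K^\infty_0$ is \emph{connected}. Since $x_j\to x_\infty\in\D K^\infty_0$, there is a finite-length path in $\D K^\infty_0$ from $0$ to $x_\infty$; it is a compact set, so smooth convergence on compact sets gives, for $j$ large, a nearby path in $\D K^j_0$ from $0$ to (a point near, and then to) $x_j$ of comparable length. This contradicts the assumption $d_{\D K^j_0}(0,x_j)\ge j$. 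If you replace your graphical-chart and projection paragraphs by this connectedness observation — using curvature bounds only locally near $x_j$ to close the last small gap between the transferred path's endpoint and $x_j$ — your proof is correct and essentially the paper's.
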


\begin{proof}
If not, there is a sequence $\K^j$ of ancient $\al$-Andrews flows with $H(0,0)=1$, and a sequence of points $x_j\in \D K_0^j\cap B(0,R)$ that cannot be joined to $0$ by a path in $\partial K_0^j$ of length at most $j$.
By \cite[Thm. 1.12]{HK} we can pass to a subsequential limit $\K^\infty$ with convex (in particular connected) time slices. Since $K_0^\infty$ cannot be a slab, the boundary $\D K_0^\infty$ is also connected, which gives a contradiction for $j$ large enough.
\end{proof}

\subsection{The standard surgery solution}
\label{sec_standard_solution}

We will first consider standard caps  $K^{\textrm{st}}$ as in Definition \ref{def_stdcap}, that are $\alpha$-Andrews and $\beta$-uniformly $2$-convex for some values $\alpha=\alpha(K^{\textrm{st}}), \beta=\beta(K^{\textrm{st}})>0$. Afterwards, we will construct a particular model of $K^{\textrm{st}}$ that is suitable for surgeries.

\begin{proposition}\label{prop_std_sol}
Let $K^{\textrm{st}}$ be a standard cap as in Definition \ref{def_stdcap}, with $\alpha=\alpha(K^{\textrm{st}}), \beta=\beta(K^{\textrm{st}})>0$. There is a unique mean curvature flow $\{K_t\}_{t\in [0,1/2(N-2))}$ starting at $K^{\textrm{st}}$. It has the following properties:
\begin{enumerate}
\item It is $\alpha$-Andrews, convex, and $\beta$-uniformly $2$-convex.
\item There are continuous increasing functions $\underline{H},\ol{H}:[0,\tfrac{1}{2(N-2)})\ra \R_+$
with $\underline{H}(t)\ra \infty$ as $t\ra\tfrac{1}{2(N-2)}$ such that $\underline{H}(t)\leq H(p,t)\leq \ol{H}(t)$ for all $p\in\D K_t$ and $t\in [0,\tfrac{1}{2(N-2)})$.
\item For every $\eps>0$ and $\tau<\tfrac{1}{2(N-2)}$ there exists an $R=R(\eps,\tau)<\infty$ such that
outside $B(0,R)$ the flow $\{K_t\}_{t\in [0,\tau]}$ is $\eps$-close to the evolution of the solid round unit cylinder $\bar{D}^{N-1}\times\R$. 
\item For every $\eps>0$, there exists a $\tau=\tau(\eps)<\tfrac{1}{2(N-2)}$ such that every
point $(p,t)\in \D K_t$ with $t\geq \tau$ is $\eps$-close to a $\beta$-uniformly $2$-convex
ancient $\al$-Andrews flow.
\end{enumerate}
\end{proposition}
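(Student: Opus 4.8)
The plan is to prove the four properties of the standard surgery solution by combining short-time existence/uniqueness for mean curvature flow of a convex hypersurface with the preservation results already available in the excerpt, together with a careful use of the cylindrical structure at infinity.

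\textbf{Setup and existence/uniqueness.} First I would invoke standard short-time existence for mean curvature flow starting from the smooth (noncompact but with bounded geometry) hypersurface $\partial K^{\textrm{st}}$: since $K^{\textrm{st}}$ coincides with a solid round unit cylinder outside $B(0,10)$, the initial hypersurface has bounded curvature and, in fact, uniformly controlled geometry at all scales. Short-time existence for such initial data follows from, e.g., the Ecker--Huisken type theory for flows with bounded curvature; uniqueness likewise holds in the class of flows with bounded curvature on compact time intervals. The maximal time of smooth existence is at least the extinction time $\tfrac{1}{2(N-2)}$ of the round unit cylinder, because by the avoidance principle (comparing with the cylinder outside $B(0,10)$ and with large shrinking balls) the flow stays trapped between the cylinder and a shrinking region, forcing curvature to blow up no earlier than $\tfrac{1}{2(N-2)}$; conversely, as $t\to\tfrac{1}{2(N-2)}$ the cylindrical part forces $H\to\infty$ somewhere, so $\tfrac{1}{2(N-2)}$ is exactly the maximal time. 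This also gives the extinction statement in (2).

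\textbf{Properties (1)--(3).} Convexity of $K_t$ for $t>0$ follows from the strong maximum principle applied to the second fundamental form (the initial hypersurface is already weakly convex), using that the flow has bounded geometry so the tensor maximum principle applies on noncompact domains. The $\alpha$-Andrews condition and $\beta$-uniform $2$-convexity are preserved: the Andrews condition is preserved under smooth mean curvature flow with bounded geometry by Andrews' noncollapsing argument (the excerpt already records this as the defining feature of $\alpha$-Andrews flows), and $\lambda_1+\lambda_2\geq\beta H$ is preserved by the maximum principle for the relevant reaction-diffusion inequality. For property (2), lower and upper mean curvature bounds $\underline H(t)\le H\le \ol H(t)$: the cylindrical end forces $\sup H(\cdot,t)$ to equal the (finite) mean curvature of the shrinking cylinder, hence $\ol H$ is continuous and increasing; the lower bound $\underline H(t)>0$ uses that at the ``most curved'' tip region, comparison with a slightly smaller evolving standard cap (or, more simply, the strong maximum principle $\partial_t H = \Delta H + |A|^2 H \ge \Delta H$, so $\min H$ is nondecreasing) keeps $H$ bounded below by the cylinder's value, which $\to\infty$ as $t\to\tfrac{1}{2(N-2)}$; monotonicity and continuity of $\underline H,\ol H$ follow from these comparisons. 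For property (3), $\eps$-closeness to the shrinking cylinder outside a large ball: this is a consequence of uniqueness and the avoidance principle combined with interior estimates — on $[0,\tau]$ with $\tau<\tfrac{1}{2(N-2)}$, the flow and the cylinder solution have uniformly bounded geometry, agree at $t=0$ outside $B(0,10)$, and the difference decays as one moves away from $B(0,10)$ by a standard pseudolocality/unique-continuation-type argument (or: foliating the region outside $B(0,R)$ by slightly perturbed cylinders as barriers); choosing $R=R(\eps,\tau)$ large enough gives the claim.

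\textbf{Property (4) — the main obstacle.} The delicate point is that every point $(p,t)$ with $t\ge\tau(\eps)$ is $\eps$-close to a $\beta$-uniformly $2$-convex ancient $\alpha$-Andrews flow. The natural approach is a contradiction/compactness argument: if not, there are $\eps_0>0$, times $t_j\uparrow \tfrac{1}{2(N-2)}$ and points $p_j\in\partial K_{t_j}$ that are not $\eps_0$-close to any such ancient flow. Parabolically rescale the flow by $H(p_j,t_j)$ around $(p_j,t_j)$; since the flow is $\alpha$-Andrews and $\beta$-uniformly $2$-convex, these rescaled flows satisfy uniform curvature estimates on larger and larger parabolic balls going backward in time (by the Andrews condition, the convexity, and the fact that $H\to\infty$ uniformly so the rescaled flows are defined on time intervals $(-T_j,0]$ with $T_j\to\infty$ — here I would use the global curvature estimate machinery, or more elementarily the fact that $\alpha$-Andrews $+$ convex gives local curvature bounds). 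Passing to a subsequential limit, using the compactness theorem for $\alpha$-Andrews flows (\cite[Thm. 1.14]{HK}) I obtain a $\beta$-uniformly $2$-convex ancient $\alpha$-Andrews flow to which $(p_j,t_j)$ is, after rescaling, arbitrarily close — contradicting the choice of $p_j$. The main work is verifying the uniform backward-in-time curvature bounds after rescaling: I expect this to reduce to showing that $\sup H(\cdot,t)/\inf H(\cdot,t)$ along the tip region stays controlled as $t\to\tfrac{1}{2(N-2)}$, or equivalently a bound on how fast the ``cap'' region can spread out — this follows from the Andrews condition (interior/exterior balls of radius $\alpha/H$) which prevents degeneration and gives, together with convexity, the required uniform geometry at the scale $H^{-1}(p_j,t_j)$.
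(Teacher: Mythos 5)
The broad architecture of your argument tracks the paper's: short-time existence and uniqueness in the bounded-curvature class, preservation of convexity/Andrews/uniform two-convexity by maximum principles, asymptotic cylindricity via a compactness argument, and deducing (4) from the curvature pinching by a blowup/compactness argument. However there are two genuine gaps at precisely the steps that require real work.

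\textbf{Ruling out early singularities.} You claim that the avoidance principle, ``comparing with the cylinder outside $B(0,10)$ and with large shrinking balls,'' forces the curvature to blow up \emph{no earlier} than $\tfrac{1}{2(N-2)}$. This does not follow. Avoidance traps $K_t$ inside the shrinking cylinder, which only gives an \emph{upper} bound on the extinction time. Nothing in that argument excludes, say, the cap region degenerating or a neckpinch forming in the transition zone at some earlier time $T<\tfrac{1}{2(N-2)}$. The paper handles this by first establishing asymptotic cylindricity (item (3)), which combined with the local curvature estimate confines any putative early singularity to a compact set; then rescaling near such a singularity and invoking convexity together with the boundedness of curvature at infinity produces an ancient $\alpha$-Andrews flow with nonvanishing asymptotic volume ratio, contradicting \cite[Rem.~1.20]{HK}. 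This blowup/volume-ratio step is the content of showing $T=\tfrac{1}{2(N-2)}$, and it is missing from your proposal.

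\textbf{The lower mean curvature bound $\underline H(t)\to\infty$.} Your ``more simply'' fallback — $\partial_t H=\Delta H+|A|^2H\ge\Delta H$, hence $\min H$ nondecreasing — only yields $\min_{\partial K_t}H\ge\min_{\partial K_0}H$, a constant. It does not give divergence as $t\to\tfrac{1}{2(N-2)}$. The correct mechanism (and the one the paper uses) is: once $T=\tfrac{1}{2(N-2)}$ is known, the flow is contained in a cylinder of radius $r(t)\to0$; the $\alpha$-Andrews condition forces $\alpha/H(p,t)\le r(t)$, hence $H(p,t)\ge \alpha/r(t)\to\infty$ uniformly. Your phrase ``comparison with a slightly smaller evolving standard cap'' is circular (it presupposes the statement being proved) and does not substitute for this. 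Note also that the paper's logical ordering is not accidental: (3) must be established before (1) is complete, because asymptotic cylindricity is exactly what localizes the noncompact maximum principle needed to preserve the Andrews condition — a subtlety your appeal to ``Andrews' noncollapsing argument'' glosses over. Finally, you only address uniqueness within the bounded-curvature class; the paper's statement is unconditional, which requires the extra pseudolocality argument at the end of the paper's proof.
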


\begin{proof}[Proof]
The argument is closely related to the one for Ricci flow \cite{perelman_surgery,KL,MT}, so we will give a brief, but complete, treatment.

Existence and uniqueness of a smooth solution on a maximal time interval $[0,T)$ with bounded curvature on compact subintervals follows from standard theory, see e.g. \cite[Thm. 4.2]{EckerHuisken}, \cite[Thm. 1.1]{ChenYin}.
Moreover, since $A=A^i_j$ evolves by $\partial_t A=\Lap A+\abs{A}^2 A$,
it follows from the tensor-maximum principle, see e.g. \cite[Thm. 12.34]{Ricciflow_book}, that convexity and $\beta$-uniform 2-convexity are preserved along the flow.

Assume (3) fails for some $\eps>0$, $\tau<T$. Since we have uniform curvature bounds on $[0,\tau]$, we can pass to smooth limits. Thus, as a pointed limit around a suitable sequence of points going to infinity we get a mean curvature flow with bounded curvature that starts from a round cylinder, but is not $\eps$-close to the standard evolution of the round cylinder; this contradicts uniqueness of the evolution of the cylinder.

Next, we recall from \cite[Thm. 2]{Andrews_Langford_McCoy} that the quantities $\underline{Z}$, $\overline{Z}$ introduced there, satisfy the evolution inequalities
\begin{equation}
 \partial_t \underline{Z}\geq \Lap \underline{Z}+\abs{A}^2\underline{Z},\qquad \partial_t \overline{Z}\leq \Lap \overline{Z}+\abs{A}^2\overline{Z}
\end{equation}
in the viscosity sense. Arguing as in \cite[Cor. 3]{Andrews_Langford_McCoy}, it follows that the $\al$-Andrews condition is preserved;\footnote{Since we already know that the solution is asymptotically cylindrical, there is no subtle part at all about localizing the maximum principle.} this completes the proof of (1).

Assume $T$ is strictly less than $\tfrac{1}{2(N-2)}$.
Then, since we have shown that (3) holds for $\tau<T$, by the local curvature estimate \cite[Thm. 1.8, Rem. 2.10]{HK}, we obtain uniform bounds on the curvature near infinity; hence the curvature has to blow up inside some compact set.
Select a sequence of points $(p_j,t_j)$ with $Q_j=H(p_j,t_j)\to\infty$ and $H\leq 2 Q_j$ on $P(p_j,t_j,jQ_j^{-1})$. By convexity and since the curvature stays bounded outside some compact set, the time slices $K_{t_j}$ contain some cones based at $p_j$ with a definite lower bound on the cone angle.
Rescaling by  $Q_j^{-1}$ and passing to a limit, we obtain an ancient $\alpha$-Andrews flow with nonvanishing asymptotic volume ratio; this contradicts \cite[Rem. 1.20]{HK}.
Thus $T=\tfrac{1}{2(N-2)}$.

Since the solution is contained in a shrinking cylinder that becomes extinct at $T=\tfrac{1}{2(N-2)}$, the Andrews condition implies that the curvature must blow up everywhere as $t\to\tfrac{1}{2(N-2)}$, i.e. we get (2).

Assertion (4) follows from (2) and the global convergence theorem \cite[Thm. 1.12]{HK}.

Suppose $\{K_t'\}_{t\in [0,T')}$ is any smooth mean curvature flow starting at $K^{\textrm{st}}$, without any a priori bound on curvature.  Let $t_1\leq \min\{T',\frac{1}{2(N-2)}\}$ be the supremum of
the numbers $t$ such that $\K'$ coincides with $\K$ constructed above.
If $t_1< \min\{T',\frac{1}{2(N-2)}\}$, then $\hat K_{t_1}=K_{t_1}$, and
by pseudolocality  \cite[Thm. 7.5]{ChenYin} the curvature of $\K'$ is bounded uniformly
for a time interval $[t_1,t_2)$ for some $t_2>t_1$, contradicting the
uniqueness theorem.  Thus we obtain a unique solution without the bounded curvature assumption.
\end{proof}

\begin{proposition}[Existence of a suitable standard cap]\label{lemma_glue_caps}
Given any $\Balpha=(\al,\beta,\gamma)\in(0,N-2)\times (0,\tfrac{1}{N-2})\times (0,\infty)$, there exist $K^{\textrm{st}}\subset \R^N$, $\al^{\textrm{st}}>\al$, $\beta^{\textrm{st}}>\beta$, and $\bar{\delta}>0$ (all depending on $\Balpha$) with the following properties:
\begin{enumerate}
\item $K^{\textrm{st}}$ is a standard cap (see Definition \ref{def_stdcap})
\item $K^{\textrm{st}}$ is $\al^{\textrm{st}}$-Andrews and $\beta^{\textrm{st}}$-uniformly $2$-convex.
\item If $\delta\leq\bar{\delta}$, then it is possible to do surgery such that all properties in Definition \ref{def_surgery} hold, and such that in addition the $\alpha$-Andrews condition and the $\beta$-uniform $2$-convexity are preserved.
\end{enumerate}
\end{proposition}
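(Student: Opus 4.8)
\textbf{Proof proposal for Proposition \ref{lemma_glue_caps}.}

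The plan is to construct $K^{\textrm{st}}$ explicitly by gluing a carefully chosen convex cap onto a round half-cylinder of radius $1$, in such a way that there is a definite amount of ``room'' in the Andrews constant and the $2$-convexity constant, which then survives the surgery modification. First I would recall that for the trivial modification (do nothing, i.e.\ $K^\sharp=K^-$ on the part of the neck where no cap is glued) the Andrews condition with constant $\alpha$ and the $\beta$-uniform $2$-convexity of a $\delta$-neck are automatic for $\delta$ small, since a $\delta$-neck is $\delta$-close to the round cylinder, which is $(N-2)$-Andrews and $\tfrac{1}{N-2}$-uniformly $2$-convex, and both $\alpha<N-2$, $\beta<\tfrac1{N-2}$ leave a gap. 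So the real content is to build, once and for all, a fixed convex rotationally symmetric domain $K^{\textrm{st}}$ that (i) agrees with the solid round unit half-cylinder outside $B(0,10)$, (ii) is strictly $\alpha^{\textrm{st}}$-Andrews and strictly $\beta^{\textrm{st}}$-uniformly $2$-convex for some $\alpha^{\textrm{st}}>\alpha$, $\beta^{\textrm{st}}>\beta$, and (iii) can be interpolated into the neck with $C^\ell$-control of the second fundamental form and its derivatives as in Definition \ref{def_surgery}(2).

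For the construction I would take a rotationally symmetric profile curve: parametrize $K^{\textrm{st}}=\{(x,y)\in\R\times\R^{N-1}: |y|\le u(x)\}$ where $u:(-\infty,b]\to\R_{>0}$ is smooth, strictly concave where it is not constant, equals $1$ for $x\le 0$, and closes off smoothly (with $u(b)=0$, $u'(b)=-\infty$, rounding to a hemispherical tip) for $x$ near $b$, with $b$ chosen $\le 10$. Strict concavity plus the explicit form of the principal curvatures for rotationally symmetric hypersurfaces (one curvature from the profile, the other $N-2$ from the sphere factor $1/u\sqrt{1+u'^2}$) gives convexity and, because the sphere-direction curvature dominates, $\beta^{\textrm{st}}$-uniform $2$-convexity with $\beta^{\textrm{st}}$ close to $\tfrac1{N-2}>\beta$; the Andrews constant $\alpha^{\textrm{st}}$ can likewise be computed directly for a convex rotationally symmetric cap and made $>\alpha$ since it limits to $N-2$ on the cylindrical part and can be kept bounded below on the compact cap region by choosing the profile to interpolate slowly. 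This is essentially the mean-curvature-flow analogue of the standard-solution construction in Ricci flow (\cite{perelman_surgery,KL,MT}), and Proposition \ref{prop_std_sol} then shows the resulting flow out of $K^{\textrm{st}}$ behaves as required.

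For part (3): the surgery procedure of Definition \ref{def_surgery} replaces the final time slice of a strong $\delta$-neck of radius $s$ by a rescaled copy of (two facing) $K^{\textrm{st}}$'s, interpolated with the actual presurgery domain $K^-$ on an annular region where $K^-$ is $\delta$-close to the round cylinder. The modification takes place in $B(p,5\Gamma s)$; properties (1) (location), (2) ($C^\ell$ bounds on $\partial K^\sharp$) and (4) ($\delta'(\delta)$-closeness to standard caps) are built into the gluing, and property (3) of Definition \ref{def_surgery} (every point of $\partial K^\sharp$ with $\lambda_1<0$ is matched by a presurgery point with $\lambda_1/H$ no larger) holds because $K^{\textrm{st}}$ is convex, so $\lambda_1\ge 0$ on the exactly-standard part, and on the interpolation annulus any slight negativity of $\lambda_1$ can be dominated by the $\delta$-neck geometry of $K^-$ — this is where the strict inequalities $\alpha^{\textrm{st}}>\alpha$, $\beta^{\textrm{st}}>\beta$ are spent. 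Concretely, since the rescaled $K^{\textrm{st}}$ is strictly $\alpha^{\textrm{st}}$-Andrews and the neck is $\delta$-close to the cylinder which is strictly more than $\alpha$-Andrews, for $\delta$ below a threshold $\bar\delta(\Balpha)$ the interpolated hypersurface is $\alpha$-Andrews (one checks the interior/exterior ball condition separately on the three regions: standard part, interpolation annulus, cylinder part, using that the condition is an open, scale-invariant $C^2$-condition with strict slack on the two ends), and similarly $\lambda_1+\lambda_2\ge\beta H$ is preserved. The main obstacle, and the step I would be most careful about, is exactly this interpolation estimate: one must verify that on the transition annulus the convex combination of the (scaled) standard profile and the neck profile, together with its first two derivatives, stays within the slack $\alpha^{\textrm{st}}-\alpha$ and $\beta^{\textrm{st}}-\beta$ uniformly as $\delta\to 0$ — this requires choosing the width of the transition region and the cutoff function appropriately (a fixed, $\delta$-independent number of cylinder radii) and controlling the $C^2$ (indeed $C^{\lfloor1/\delta\rfloor}$) distance of the neck to the cylinder against the fixed geometry of $K^{\textrm{st}}$. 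Everything else is routine ODE/convex-geometry bookkeeping.
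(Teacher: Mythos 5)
Your overall approach — explicitly constructing a rotationally symmetric convex cap $K^{\textrm{st}}$ with strict slack in the Andrews and $2$-convexity constants, then interpolating it into the $\delta$-neck — matches the paper's construction, and your discussion of how the slacks $\alpha^{\textrm{st}}-\alpha$ and $\beta^{\textrm{st}}-\beta$ are used to preserve the $\alpha$-Andrews condition and $\beta$-uniform $2$-convexity is correct. Likewise, items (1), (2), and (4) of Definition \ref{def_surgery} follow from the gluing and the $C^\ell$-bounds on the strong neck, as you say.

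However, your treatment of item (3) of Definition \ref{def_surgery} has a genuine gap, and it is exactly the point the paper flags as ``the only somewhat non-obvious point.'' You argue that on the interpolation annulus ``any slight negativity of $\lambda_1$ can be dominated by the $\delta$-neck geometry of $K^-$'' and that this is ``where the strict inequalities $\alpha^{\textrm{st}}>\alpha$, $\beta^{\textrm{st}}>\beta$ are spent.'' Neither claim is correct: the slack in the Andrews and $2$-convexity constants controls preservation of those pinching conditions, not item (3), and a naive cutoff interpolation does \emph{not} automatically keep $\lambda_1/H$ from dropping below what was present in $K^-$. Concretely, if $K^{\textrm{st}}$ agrees exactly with the round cylinder on the transition annulus, then interpolating $K^-$ (with profile $1+O(\delta)$) against the exact cylinder using a cutoff $\varphi$ with $\varphi''\sim 1$ produces curvature errors of size $O(\delta)$ in the axial direction; these errors can push $\lambda_1/H$ at a modified point strictly below $\inf_{\partial K^-\cap B}\lambda_1/H$, so that no witness $p_-$ exists and item (3) fails. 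The missing idea is Hamilton's: one must ``bend the cylinder slightly inwards,'' i.e.\ the surgery replaces the neck profile $u$ by $u-\varphi f$ with $f$ convex and $\varphi$ an increasing cutoff, chosen so that on the entire modified region the axial principal curvature is made \emph{strictly more positive} than it was in $K^-$. With this built in, any point of $\partial K^\sharp\cap B$ with $\lambda_1<0$ arises from a presurgery point with smaller $\lambda_1/H$, which is exactly what item (3) demands. The paper cites \cite{Hamilton_pic}, \cite[Sec.\ 72]{KL}, and \cite[p.\ 155]{huisken-sinestrari3} for this construction; without it, your interpolation estimate does not close.
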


\begin{proof}
We can take $K^{\textrm{st}}$ a smooth convex domain that is a small perturbation of a solid half cylinder with a half ball attached. This obviously satisfies (1) and (2).
 Let $\{N_t\}$ be a strong $\delta$-neck with optimal quality $\delta$ small enough. Since the neck is strong by assumption, by standard interior estimates we get uniform bounds for all derivatives of the curvatures. It is then clear, that we can cut along the final time slice of the neck and glue in two copies of $K^{\textrm{st}}$ such that item (1), (2) and (4) of Definition \ref{def_surgery} hold, and such that the $\alpha$-Andrews condition and the $\beta$-uniform $2$-convexity are preserved also. The only somewhat non-obvious point is item (3) of  Definition \ref{def_surgery}, but as observed by Hamilton \cite{Hamilton_pic} (see also \cite[Sec. 72]{KL} or \cite[p. 155]{huisken-sinestrari3}) this can be ensured by bending the cylinder slightly inwards.
\end{proof}

\section{Existence of mean curvature flow with surgery}\label{section_existence}

We keep the parameters $\Balpha=(\alpha,\beta,\gamma)$ fixed for this section. 

\subsection{The canonical neighborhood theorem}\label{subsec_cannbd}

In the proof of our main existence result, Theorem \ref{thm_main_existence}, we will consider sequences of flows where the $\mathbb{H}$-parameters degenerate suitably.
We will now prove the following crucial self-improvement phenomenon: If the surgeries are done on necks, where we a priori only know that they have at least some small but fixed quality $\bar{\delta}$, then the degeneration of the other parameters actually forces them to be more and more precise. In fact, this is just a reformulation of the canonical neighborhood theorem.

\begin{theorem}[Self-improvement of necks]\label{prop_selfimprove}
There exists a constant $\bar{\delta}=\bar{\delta}(\Balpha)>0$ with the following property.
If $\K^j$ is a sequence of $(\Balpha,\de_j,\mathbb{H}_j)$-flows ($\delta_j\leq \bar{\delta}$) with ${H^j_{\textrm{trig}}}/{H^j_{\textrm{neck}}},{H^j_{\textrm{neck}}}/{H^j_{\textrm{th}}}\ra \infty$, 
and if $(p_j,t_j)\in \D \K^j$ is a sequence of points with $H(p_j,t_j)\ra \infty$, then
after parabolic rescaling to normalize $H$ at $(p_j,t_j)$ and passing to a subsequence we have smooth convergence to either (a) a $\beta$-uniformly $2$-convex ancient $\al$-Andrews flow, 
or (b) the evolution of a standard surgery cap preceded by the evolution of a round cylindrical flow.\footnote{In case (b), the meaning of smooth convergence is as in
Definition \ref{rem_smooth_convergence}; also, the notion of $\eps$-closeness in Theorem \ref{thm_can_nbd} should be interpreted accordingly.}  
\end{theorem}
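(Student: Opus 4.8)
The plan is to argue by contradiction in the usual blowup fashion, extracting a limit flow and then using the structural results established earlier to identify it as one of the two model geometries. First I would normalize: after parabolic rescaling so that $H(p_j,t_j)=1$, the global convergence corollary (Corollary \ref{thm_global_convergence}) gives, after passing to a subsequence, a smoothly convergent subsequence $\hat\K^j\to\K^\infty$, where $\K^\infty$ is a generalized $(\al,\de)$-flow with convex time slices, and the convergence is in the sense of Definition \ref{rem_smooth_convergence}. The degeneration hypotheses ${H^j_{\textrm{trig}}}/{H^j_{\textrm{neck}}},{H^j_{\textrm{neck}}}/{H^j_{\textrm{th}}}\to\infty$ are what we will exploit to pin down the structure of $\K^\infty$: because after rescaling the surgery scales $s_\sharp(\hat\K^j)$ are comparable to the neck curvature $H^j_{\textrm{neck}}$ (up to the fixed factor $\mu$), and the trigger and thick curvatures are forced infinitely far away, one expects that in the rescaled picture the limit either contains no surgeries at all, or contains exactly the model surgery on a unit-scale neck.

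The key dichotomy is whether the surgery scales in the rescaled flow stay bounded and bounded away from zero near $(p_j,t_j)=(0,0)$. \emph{Case (a): no surgeries survive in the limit near the basepoint}, i.e. after passing to a subsequence some fixed parabolic neighborhood $P(0,0,r)$ is unmodified by surgeries for all large $j$. Then $\K^\infty$ is a smooth ancient $\al$-Andrews flow near the basepoint; its time slices are convex by Corollary \ref{thm_global_convergence}, and $\beta$-uniform $2$-convexity passes to the limit since $\lambda_1+\lambda_2\geq\beta H$ is a closed condition preserved under smooth convergence. Moreover the bound $H\leq H^j_{\textrm{trig}}$ together with $H^j_{\textrm{trig}}/H(p_j,t_j)\to\infty$ shows the limit is genuinely ancient (defined on all of $(-\infty,0]$, not just a finite backwards interval truncated by the initial time, since by Remark \ref{remark_controlled} the initial curvature $\gamma$ is also pushed infinitely far in scale once $H^j_{\textrm{neck}}\to\infty$ — one needs $H^j_{\textrm{th}}\to\infty$, but note the hypothesis $H^j_{\textrm{neck}}/H^j_{\textrm{th}}\to\infty$ does \emph{not} force $H^j_{\textrm{th}}\to\infty$, so this point needs care; see below). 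This gives conclusion (a).

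\emph{Case (b): a surgery survives in the limit near the basepoint.} Then by the smooth convergence of Definition \ref{rem_smooth_convergence} there is a limiting surgery time $t_\infty$ and a point $p\in K^{\infty,-}_{t_\infty}$ modified by surgery, with the forward and backward portions $\K^\infty_\pm$ arising as smooth limits of $\hat\K^j_\pm$. The backward portion $\K^\infty_-$ is, near $p$, a smooth limit of strong $\delta_j$-necks with $\delta_j\to 0$, hence (using that strong $\delta$-necks are by definition $\delta$-close in $C^{\lfloor 1/\delta\rfloor}$ to the shrinking unit cylinder) it is exactly the evolution of a round cylindrical flow on its backward parabolic neighborhood. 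The forward portion is obtained by replacing the final neck slice with standard caps (item (1) of Definition \ref{def_alphadelta} and Definition \ref{def_surgery}), so by item (4) of Definition \ref{def_surgery}, since $\delta_j\to 0$, the post-surgery domain converges to a pair of standard caps; retaining the component containing the basepoint and flowing it forward, Proposition \ref{prop_std_sol} (uniqueness of the flow from a standard cap, with no a priori curvature bound) identifies $\K^\infty_+$ as the evolution of a standard surgery cap. Thus the limit is the evolution of a standard surgery cap preceded by a round cylindrical flow, which is conclusion (b).

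The main obstacle I anticipate is controlling \emph{why} Case (b) cannot degenerate further — a priori the surgery scale in the rescaled flow could tend to zero (microscopic surgeries, as in Case 3 of the proof of Theorem \ref{loccurvest}) or to infinity (macroscopic surgeries, Case 2 there). The local curvature estimate (Theorem \ref{loccurvest}) applied at $(p_j,t_j)$ with $H(p_j,t_j)=1$ gives a definite curvature bound in a fixed parabolic ball, and combined with item (2) of Definition \ref{def_surgery} (which bounds $|A|\lesssim s^{-1}$ on surgered regions) this should force the surgery scale $s$ near the basepoint to be comparable to $1$, ruling out $s\to 0$; ruling out $s\to\infty$ uses Proposition \ref{lemma_pseudo} (the forward estimate after surgery) exactly as in Case 2 of Theorem \ref{loccurvest}. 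The subtler point, as flagged above, is that the hypothesis only demands $H^j_{\textrm{neck}}/H^j_{\textrm{th}}\to\infty$, not $H^j_{\textrm{th}}\to\infty$; but if $H^j_{\textrm{th}}$ stayed bounded then $H^j_{\textrm{neck}}$ and hence the unrescaled surgery scales would still go to infinity relative to nothing useful — here one must instead use that $H(p_j,t_j)\to\infty$ while $H^j_{\textrm{th}}$ is what controls which components are discarded, and that after rescaling by $H(p_j,t_j)$ the ratio $H^j_{\textrm{trig}}/H(p_j,t_j)$ need not blow up either. Resolving this requires the observation (mentioned in the Remark after Theorem \ref{thm_glob_curv_est}) that the Andrews condition together with the degeneration of parameters clears out stray components; making that mechanism precise, so that the rescaled limit really is ancient and really does have the claimed canonical form, is the heart of the argument.
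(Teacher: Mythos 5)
Your Case (a) and your use of Corollary~\ref{thm_global_convergence} to extract a limiting generalized $(\al,\de)$-flow with convex time slices match the paper's opening moves. But Case (b) contains a fatal misreading of the hypothesis: you write that the backward portion is ``a smooth limit of strong $\delta_j$-necks with $\delta_j\to 0$,'' whereas the theorem only assumes $\delta_j\leq\bar\delta$ for a fixed small constant $\bar\delta$. Nothing in the hypotheses forces the surgery-neck quality to improve a priori -- that is precisely what the theorem (``self-improvement of necks'') must establish, using the degeneration of the ratios $H^j_{\textrm{trig}}/H^j_{\textrm{neck}}$ and $H^j_{\textrm{neck}}/H^j_{\textrm{th}}$. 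Without this, all you know is that the limit of the pre-surgery slices near the basepoint is $\bar\delta$-close to a cylinder, which is nowhere near enough to conclude it is a round cylindrical flow or that the post-surgery cap is a standard cap to arbitrary precision.

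The missing idea is the content of Claim~\ref{claim_two_unbounded} in the paper. One takes the limit surgery neck $\hat N\subset\hat K^-_T$ and uses item~(2) of Definition~\ref{def_MCF_surgery} -- that the surgery necks are a \emph{minimal separating collection} between $\{H=H_{\textrm{trig}}\}$ and $\{H\leq H_{\textrm{th}}\}$ -- to produce curves $\gamma_j$ in the approximators connecting the trigger set to the thick set and passing through $\hat N^j$. The local curvature estimate (Theorem~\ref{loccurvest}) and global curvature estimate (Theorem~\ref{thm_glob_curv_est}) give two-sided bounds $c(\Lambda)\leq \hat H/\hat H_{\textrm{neck}}\leq C(\Lambda)$ on balls of any fixed radius $\Lambda\hat H_{\textrm{neck}}^{-1}$ around the neck center. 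Since $H^j_{\textrm{trig}}/H^j_{\textrm{neck}}\to\infty$ and $H^j_{\textrm{neck}}/H^j_{\textrm{th}}\to\infty$, the endpoints of $\gamma_j$ are forced out of any such ball for $j$ large, so in the limit $\hat K^-_T\setminus\hat N$ has two unbounded components. Hence $\hat K^-_T$ contains a line, by monotonicity so does every earlier time slice, the convex time slices split off an $\R$-factor, the backward flow is an exact round shrinking cylinder (c.f.\ Lemma~\ref{lemma_limitneck}), and in particular the approximating necks are $\hat\delta$-necks for \emph{every} $\hat\delta>0$ once $j$ is large. Only at this point does item~(4) of Definition~\ref{def_MCF_surgery} (not merely item~(4) of Definition~\ref{def_surgery}, which you cite) kick in to guarantee the surgery was performed with precision $\delta'(\hat\delta)\to 0$, so that the post-surgery slice converges to an exact standard cap and Proposition~\ref{prop_std_sol} identifies the forward evolution. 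Your proposal correctly anticipates the ingredients for ruling out microscopic and macroscopic surgery scales (these are absorbed into Corollary~\ref{thm_global_convergence}), and your worry about clearing out other components is legitimate but is the \emph{last} step, not the heart of the proof; the heart is the two-ends/line argument you are missing.
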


\begin{remark} Note that Theorem \ref{prop_selfimprove} is equivalent to Theorem \ref{thm_can_nbd}.
\end{remark}

\noindent\emph{Outline of the proof.} The proof amounts to classifying the limits, whose existence is guaranteed by the global convergence result (Corollary \ref{thm_global_convergence}).
If the limit doesn't contain surgeries, then it must be a $\beta$-uniformly $2$-convex ancient $\alpha$-Andrews flow. If the limit contains a surgery, then we argue, using in particular part (2) of Definition \ref{def_MCF_surgery}, the assumption that the curvature ratios degenerate, and the global curvature estimate (Theorem \ref{thm_glob_curv_est}), that the limit must contain a line. It is then easy to conclude that there is in fact only one surgery, and that the limit must have the structure as claimed.
Finally, we observe that potential other connected components get cleared out.

\begin{proof}
Let $\bar{\delta}=\bar{\delta}(\Balpha)>0$ small enough such that the estimates from the previous sections apply.
Since the initial domain has principal curvatures bounded by $\gamma/\alpha$,
the curvature remains bounded for a definite amount of time. Thus, the rescaled flows $\hat{\K}^j$ are defined on parabolic balls $P(0,0,\eta_j)$ with $\eta_j\ra\infty$ as $j\ra\infty$.
By Corollary \ref{thm_global_convergence}, after passing to a subsequence and discarding connected components that don't contain the origin, $\hat{\K}^j\to \hat{\K}$ smoothly and globally, where the limit $\hat{\K}$ is a generalized $(\al,\bar{\de})$-flow with convex -- and thus in particular connected -- time slices.

If $\hat{\K}$ doesn't contain surgeries, then it is a $\beta$-uniformly $2$-convex ancient $\alpha$-Andrews flow.
Otherwise, let $T\in(-\infty,0]$ be a surgery time and let $\hat{N}\subset \hat{K}_T^-$ be a surgery neck of quality $\bar{\de}$ sitting in the backward time slice.

\begin{claim}\label{claim_two_unbounded}
$\hat{K}_T^-\setminus \hat{N}$ has two unbounded components.
\end{claim}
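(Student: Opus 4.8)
The plan is to show that the presurgery time slice $\hat K_T^-$, after removing the surgery neck $\hat N$, must fall apart into exactly two unbounded pieces; equivalently, that $\hat N$ does not bound on either side in $\hat K_T^-$. First I would recall the setup: $\hat N$ is (the backward time slice of) a strong $\bar\delta$-neck of radius comparable to $H^{-1}(0,0)\sim 1$, hence $\hat K_T^-\setminus\hat N$ has (at least) two components $\Omega_{\pm}$, one meeting each of the two boundary circles of the neck. Call a component \emph{bounded} if it has finite diameter. The goal is to rule out the possibility that one of $\Omega_\pm$ is bounded.

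The key mechanism is item (2) of Definition \ref{def_MCF_surgery}: surgery necks are chosen as a \emph{minimal} collection of solid $\delta$-necks of curvature $H_{\textrm{neck}}$ separating $\{H=H_{\textrm{trig}}\}$ from $\{H\le H_{\textrm{th}}\}$. In the rescaled flows $\hat\K^j$ the curvature scale at the basepoint is normalized to $1$, while $H^j_{\textrm{trig}}/H^j_{\textrm{neck}}\to\infty$ and $H^j_{\textrm{neck}}/H^j_{\textrm{th}}\to\infty$; so after rescaling the trigger scale goes to $\infty$ and the thick scale goes to $0$. Suppose $\Omega_+$ (say) were bounded. Then on the side of $\hat N$ facing $\Omega_+$, the presurgery regions $\hat K^{j,-}_{t_j}$ near $\Omega_+$ have uniformly bounded diameter and, by the global curvature estimate (Theorem \ref{thm_glob_curv_est}) together with convexity (Corollary \ref{thm_global_convergence}), uniformly bounded curvature; in particular the maximal curvature on that side stays bounded, hence (in the unrescaled picture) stays a definite factor below $H^j_{\textrm{trig}}$ for large $j$. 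This means the neck $\hat N$ would separate $\{H=H_{\textrm{trig}}\}$ from $\{H\le H_{\textrm{th}}\}$ \emph{vacuously on the $\Omega_+$ side} — there is no trigger point behind it — so removing it from the separating collection would still leave a separating collection, contradicting minimality. I would phrase this carefully: the component $\Omega_+$ is a bounded convex cap which, being compact with bounded curvature, cannot contain a point of curvature $H^j_{\textrm{trig}}$ for large $j$; hence the neck $\hat N$ is not needed to separate on that side, and by minimality (using also the spatial separation of surgeries, Proposition \ref{lemma_sepsurg}, to ensure neighboring necks don't interfere) one can delete it — contradiction.

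Concretely the steps are: (i) fix $T$ and $\hat N$, and decompose $\hat K_T^-\setminus\hat N$ into components $\Omega_\pm$ adjacent to the two ends of the neck, noting there are at most these two since $\hat K_T^-$ is convex hence $\partial\hat K_T^-$ is connected and a solid $\delta$-neck disconnects $\partial$ into exactly two boundary components; (ii) assume for contradiction $\Omega_+$ is bounded, lift to the sequence $\hat\K^j$ and extract uniform curvature/diameter bounds on the corresponding pieces $\Omega_+^j$ via Theorem \ref{thm_glob_curv_est} and convexity; (iii) conclude $\sup_{\Omega_+^j}H$ stays bounded after rescaling, i.e. $\sup_{\Omega_+^j}H \ll H^j_{\textrm{trig}}$ in the original scale, so $\Omega_+^j$ contains no trigger points; (iv) invoke minimality in Definition \ref{def_MCF_surgery}(2) (with Proposition \ref{lemma_sepsurg} handling disjointness) to delete the neck $\hat N$ from the separating collection, contradicting minimality; (v) therefore both $\Omega_\pm$ are unbounded, which is the claim. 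The main obstacle I expect is step (iv): making the minimality argument rigorous in the rescaled limit requires translating a statement about the finite flows $\K^j$ (where minimality literally holds) back through the rescaling and the smooth-convergence, and carefully checking that ``$\Omega_+^j$ contains no trigger point'' really lets one remove $\hat N$ without breaking separation elsewhere — one must make sure that the part of the trigger set $\{H=H^j_{\textrm{trig}}\}$ lies entirely on the $\Omega_-$ side, for which the bounded-curvature conclusion on $\Omega_+^j$, the strong-neck structure of $\hat N^j$, and the spatial separation of distinct surgery necks are all needed.
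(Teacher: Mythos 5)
Your strategy — assume one component $\Omega_+$ is bounded and derive a contradiction with minimality of the separating collection — is a reasonable reorganization of the paper's argument, which instead directly traces the separating curve $\gamma_j$ guaranteed by minimality and uses curvature bounds to show it exits on both sides. However, your version as written has a genuine gap: you only rule out \emph{trigger} points in $\Omega_+^j$ (using the upper curvature bound from Theorem \ref{thm_glob_curv_est}), but you never rule out \emph{thick} points there. If $\Omega_+^j$ contained a point with $H \le H_{\textrm{th}}^j$ while having no trigger points, then $\hat N^j$ would be essential precisely for separating the trigger set (all in $\Omega_-^j$) from that thick point — it could not be deleted, and your minimality contradiction evaporates. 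The ``vacuous on the $\Omega_+$ side'' conclusion requires that $\Omega_+^j$ contain neither trigger nor thick points.

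To close this gap you need a \emph{lower} bound on curvature on $\Omega_+^j$, of the form $\hat H \ge c(\Lambda) > 0$, which combined with $H_{\textrm{neck}}^j / H_{\textrm{th}}^j \to \infty$ forces $H > H_{\textrm{th}}^j$ on $\Omega_+^j$ for large $j$. The paper obtains exactly this via the Andrews condition together with Theorem \ref{loccurvest} (see the inequality \eqref{ratio2} stated in the paper's proof); alternatively one can argue that the rescaled limit of $\Omega_+^j$ is a compact smooth mean convex piece, so $\hat H$ has a positive minimum on its closure. Once both trigger and thick points are excluded from $\Omega_+^j$, the deletion argument still needs the short-circuiting step — replacing the portion of a trigger-to-thick curve that dips into $\Omega_+^j$ by a detour near the boundary disc of $\hat N^j$ on the $\Omega_-^j$ side, using Proposition \ref{lemma_sepsurg} to ensure the detour misses all other necks — which is essentially the curve modification the paper carries out. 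So I would not call this a cheaper route; it is the same machinery packaged as a contradiction, and the missing lower curvature bound is precisely the nontrivial ingredient you flagged as the ``main obstacle'' but did not supply.
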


\begin{proof}[Proof of Claim \ref{claim_two_unbounded}]
Note that $\hat{N}$ is the limit of some solid $\bar{\delta}$-necks $\hat{N}^j$ in the approximators $\hat{\K}^j$.
By part (2) of Definition \ref{def_MCF_surgery}, we can find a curve $\gamma_j$ in the approximator connecting
$\{H=\hat{H}_{\textrm{trig}}\}$ and $\{H\leq \hat{H}_{\textrm{th}}\}$, such that it passes through $\hat{N}^j$ but avoids all other $\bar{\delta}$-necks of the disjoint collection. We can assume that the curve $\gamma_j$ enters and leaves $\hat{N}^j$ exactly once
(if this isn't already the case, we can look at the earliest entry point and the latest exit point and change $\gamma_j$ for intermediate times to a curve within $\hat{N}^j$).
Note furthermore that $\gamma_j$ must intersect each of the two boundary discs of the cylinder exactly once (since otherwise we could modify it within $\hat{N}^j$ into a curve avoiding all necks completely, contradicting the minimal separation property).
Let $x_j$ be the center of $\hat{N}^j$. By the global curvature estimate (Theorem \ref{thm_glob_curv_est}), for every $\Lambda<\infty$ we have
\begin{equation}\label{ratio1}
{\hat{H}(x)}/{\hat{H}_{\textrm{neck}}}\leq C(\Lambda)<\infty\qquad\textrm{whenever}\,\, d(x,x_j)\leq \Lambda \hat{H}_{\textrm{neck}}^{-1}.
\end{equation}
By the Andrews condition and the local curvature estimate (Theorem \ref{loccurvest}), we also have a lower bound
\begin{equation}\label{ratio2}
{\hat{H}(x)}/{\hat{H}_{\textrm{neck}}}\geq c(\Lambda)>0\qquad\textrm{whenever}\,\, d(x,x_j)\leq \Lambda \hat{H}_{\textrm{neck}}^{-1}.
\end{equation}
Since ${H^j_{\textrm{trig}}}/{H^j_{\textrm{neck}}},{H^j_{\textrm{neck}}}/{H^j_{\textrm{th}}}\ra \infty$, given any $\Lambda<\infty$, for $j$ large enough the curve $\gamma_j$ must start and end outside $B(x_j, \Lambda \hat{H}_{\textrm{neck}}^{-1})$. Thus, $\hat{K}_T^-\setminus \hat{N}$ has at least two unbounded components. Since $\hat{K}_T^-$ is connected, $\hat{K}_T^-\setminus \hat{N}$ must have exactly two components.
\end{proof}

Since $\hat{K}_T^-$ has two ends (see Claim \ref{claim_two_unbounded}), it contains a line, and by monotonicity of sets all prior time slices contain this line also.
At each fixed time the convex set splits off an $\R$-factor, and thus there cannot be any other surgeries. It follows that $\hat{\K}$ is a round cylindrical flow for $t<T$ (c.f. the proof of Lemma \ref{lemma_limitneck}). Similarly, by the uniqueness of the standard solution (see Proposition \ref{prop_std_sol}), $\hat{\K}$ must be the evolution of the standard cap for $t>T$.

Finally, arguing as in \cite[Proof of Cor. 2.15]{HK} it follows that potential other connected components are cleared out, i.e. $\hat{\K}^j\to \hat{\K}$ smoothly and globally without the need of discarding connected components that don't contain the origin.
\end{proof}

\subsection{Existence of {$(\alpha,\delta,\mathbb{H})$-flows}}\label{subsec_existence}

We can now prove our main existence theorem for mean curvature flow with surgery, Theorem \ref{thm_main_existence}.

\noindent\emph{Outline of the proof.} We assume towards a contradiction that we have a sequence $\K^j$ of flows with degenerating $\mathbb{H}$-parameters that can be defined only on some finite maximal time intervals $[0,T_j]$.
For $j$ large, to obtain a contradiction, we want to argue that we can perform surgery and thus continue the flow beyond $T_j$. This amounts to finding suitable collection of $\de$-necks.
To this end, we first prove Claim \ref{claim_sepclaim} which shows that the thick and the trigger part in $K^j_{T_j}$ can be separated by a union of balls centered at boundary points with $H(p)=H_{\textrm{neck}}$ and radius comparable to $H_{\textrm{neck}}^{-1}$.
We then consider a minimal collection of balls with the separation property and prove that their centers are actually centers of strong $\hat{\delta}$-necks for any $\hat{\delta}$, see Claim \ref{claim_sepnecks}.
It is then easy to conclude the proof.

\begin{proof}[Proof of Theorem \ref{thm_main_existence}]
Let $\bar{\delta}=\bar{\delta}(\Balpha)>0$ small enough such that all previous estimates and also the argument in the last line of the present proof apply. 
By the maximum principle, the $\alpha$-Andrews condition and $\beta$-uniform $2$-convexity are preserved along smooth mean curvature flow \cite{andrews1,Huisken_convex}.
We assume towards a contradiction that for some $\delta\leq\ol{\delta}$ there is no constant $\Theta(\delta)<\infty$ such that the assertion of the theorem holds. Then there is a sequence $\K^j$ of $(\Balpha,\delta,\mathbb{H}_j)$-flows with ${H^j_{\textrm{trig}}}/{H^j_{\textrm{neck}}},{H^j_{\textrm{neck}}}/{H^j_{\textrm{th}}},H^j_{\textrm{neck}}\to\infty$, that can only be defined on a finite maximal time interval $I_j$.
If some $I_j$ were a half-open interval $[0,T_j)$, then the fact that $H\leq H^j_{\textrm{trig}}$ 
would allow us to pass to a limit as
$t\ra T_j$, so that $[0,T_j)$ is not maximal; therefore $I_j=[0,T_j]$ for some
$T_j<\infty$.
Moreover, it must be the case that we cannot find a minimal collection
of strong $\de$-necks in $K^j_{T_j}$
as required by the definition of an $(\Balpha,\delta,\mathbb{H}_j)$-flow (Definition \ref{def_MCF_surgery}), since otherwise we could cut along 
them (Proposition \ref{lemma_glue_caps}) and run smooth MCF for a short time, contradicting the maximality of $T_j$.\footnote{In particular, if $\{H(\cdot,T_j)\leq H^j_{\textrm{th}}\}=\emptyset$ everything is discarded at $t=T_j$, and thus the flow can be continued forever as empty flow, contradicting maximality.}
Therefore our goal is to  
produce such a collection for large $j$, to obtain a contradiction.

Let $\mathcal{I}_j$ be the set of points $p\in \D K^j_{T_j}$ with $H(p)>H^j_{\textrm{neck}}$, and let $\mathcal{J}_j$ be the set of points $p\in \D K^j_{T_j}$ with $H(p)=H^j_{\textrm{neck}}$.

\begin{claim}[Separation property]\label{claim_sepclaim}
There is a constant $C=C(\Balpha)\in(2N,\infty)$, such that the union $
V_j=\bigcup_{p\in \mathcal{J}_j} B(p,CH^{-1}(p))$, for $j$ large enough, separates $\{H=H^j_{\textrm{trig}}\}$ from $\{H\leq H^j_{\textrm{th}}\}$ in the domain $K^j_{T_j}$.
\end{claim}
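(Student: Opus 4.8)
\textbf{Proof strategy for Claim \ref{claim_sepclaim}.} The plan is to argue by contradiction: if no constant $C$ works, then for every $C$ and (a subsequence of) $j$ there is a path $\gamma_j$ in $K^j_{T_j}$ from the trigger part $\{H=H^j_{\textrm{trig}}\}$ to the thick part $\{H\le H^j_{\textrm{th}}\}$ that avoids $V_j=\bigcup_{p\in\mathcal J_j}B(p,CH^{-1}(p))$. First I would locate, along $\gamma_j$, the last point $q_j$ where $H$ equals $H^j_{\textrm{neck}}$ (such a point exists by the intermediate value theorem, since $H$ passes continuously from a value $\ge H^j_{\textrm{trig}}$ down to $\le H^j_{\textrm{th}}$, with $H^j_{\textrm{th}}<H^j_{\textrm{neck}}<H^j_{\textrm{trig}}$ guaranteed by the degeneration of ratios). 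On the portion of $\gamma_j$ after $q_j$ one has $H<H^j_{\textrm{neck}}$, so that portion lies in the ``sub-neck-curvature'' region and heads off toward the thick part. The point is that $q_j\in\mathcal J_j$, so $\gamma_j$ actually \emph{must} pass through the ball $B(q_j,CH^{-1}(q_j))\subseteq V_j$ — this is the contradiction, provided we first normalize correctly and rule out the path sneaking around.

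\textbf{Key steps.} The real content is the following: rescale the flow $\K^j$ so that $H(q_j,T_j)=1$. By the local curvature estimate (Theorem \ref{loccurvest}) together with the $\alpha$-Andrews condition, after passing to a subsequence the rescaled time-slices converge smoothly on a ball of any fixed radius $R$ to a convex limit (using convexity from the convexity estimate, Theorem \ref{thm-intro_convexity_estimate}, applied at scale comparable to the large parabolic ball available because $H^j_{\textrm{neck}}\to\infty$). I would then use Proposition \ref{lem_quasi_convex} (intrinsic distance is controlled by extrinsic distance for ancient $\alpha$-Andrews flows, and by the local estimates this applies in the limit): the piece of $\gamma_j$ that enters a fixed extrinsic ball around $q_j$ stays within controlled intrinsic distance of $q_j$, so if $\gamma_j$ ever comes within distance $\eta H^{-1}(q_j)$ of $\partial K^j_{T_j}$ near $q_j$ it must actually pass through the boundary ball of radius comparable to $H^{-1}(q_j)$. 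But $\gamma_j$ is a curve in $K^j_{T_j}$, not on the boundary, so I need one more observation: since $q_j$ realizes the last crossing of the level $\{H=H^j_{\textrm{neck}}\}$ along $\gamma_j$ and $\gamma_j$ connects two boundary-anchored regions (the trigger set and the thick set both contain boundary points), the curve $\gamma_j$ can be taken to lie near the boundary in a neighborhood of $q_j$ — or, more robustly, replace $\gamma_j$ by a curve on $\partial K^j_{T_j}$ using convexity to project, which changes the $H$-values in a controlled way. Choosing $C=C(\Balpha)$ larger than the resulting controlled constant (and larger than $2N$ for later use in Claim \ref{claim_sepnecks}) then forces $\gamma_j\cap V_j\ne\emptyset$, the desired contradiction.

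\textbf{Main obstacle.} The delicate point is the interplay between the curve $\gamma_j$ living in the \emph{interior} of $K^j_{T_j}$ and the balls in $V_j$ being centered at \emph{boundary} points. Making the reduction to a boundary curve rigorous — projecting $\gamma_j$ radially (via the Andrews interior balls) to $\partial K^j_{T_j}$ while keeping track of how $H$ and distances distort, and checking that the projected curve still connects (possibly slightly enlarged) trigger and thick regions — is where the care is needed. A secondary subtlety is that $q_j$ is the \emph{last} crossing, which is what ensures the post-$q_j$ segment of $\gamma_j$ genuinely escapes toward the thick part rather than re-entering the high-curvature region; one should record that this forces the segment to exit $B(q_j,CH^{-1}(q_j))$, so that the entry into that ball at $q_j$ is a genuine crossing of $V_j$. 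Once the boundary reduction is in hand, the blow-up/compactness argument and the quasi-convexity estimate (Proposition \ref{lem_quasi_convex}) do the rest, exactly in the spirit of the proof of Proposition \ref{structure_compact}.
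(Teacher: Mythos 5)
Your proposal and the paper's proof diverge substantially, and the gap you yourself flag (``the delicate point is the interplay between the curve $\gamma_j$ living in the interior... and the balls in $V_j$ being centered at boundary points'') is in fact fatal to the argument as written. The most immediate problem is that $H$ is a function on $\partial K^j_{T_j}$, not on the solid domain $K^j_{T_j}$, so there is no ``last point $q_j$ along $\gamma_j$ where $H$ equals $H^j_{\textrm{neck}}$'' for an interior path; the intermediate value theorem step has no meaning until you have transferred to a boundary curve. Your proposed repair — project $\gamma_j$ radially to the boundary ``using convexity'' and argue that $H$-values and distances ``distort in a controlled way'' — is not justified and does not obviously hold: $K^j_{T_j}$ is not convex, the boundary need not be connected, and there is no reason a boundary projection of a path avoiding $V_j$ would continue to avoid $V_j$ (the projection could wander far along $\partial K^j_{T_j}$ even if the interior path barely moves). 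Without a quantitative handle on the distance between an interior point and the boundary, and without a way to pass between boundary and interior without losing the avoidance property, the contradiction does not close.

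The paper's proof is not a contradiction/path argument at all. It works directly: set $U=\bigcup_{p\in\mathcal{I}}B(p,2NH^{-1}(p))$ where $\mathcal{I}$ is the set of boundary points with $H>H_{\textrm{neck}}$, and show that $U\setminus V$ is open and closed in $K_T\setminus V$. Openness is trivial. For closedness, take $x\in\overline{U\setminus V}$ and let $y$ be the nearest boundary point; the single key geometric fact — which your outline does not isolate but which is exactly what controls the interior-vs-boundary issue — is that for a mean convex domain, $d(x,y)\le (N-1)H^{-1}(y)$ (the inscribed ball at $y$ forces $H(y)\ge (N-1)/d(x,y)$). This immediately reduces the problem to comparing $H(y)$ to $H_{\textrm{neck}}$. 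The one nontrivial case, $H(y)<H_{\textrm{neck}}$, is handled by Proposition~\ref{lem_quasi_convex} plus the canonical neighborhood/global curvature estimates: they show $y$ and a high-curvature point $p$ (coming from $x\in\overline{U}$) lie in the same connected component of a controlled boundary ball $B(p,C_1H^{-1}(p))\cap\partial K_T$, at which point the intermediate value theorem is applied on that boundary piece (where $H$ is defined) to produce $z\in\mathcal{J}_j$ with $d(z,x)$ controlled, so $x\in V$, a contradiction. Your use of Proposition~\ref{lem_quasi_convex} gestures in a similar direction, but it is applied to control the intrinsic length of a segment of $\gamma_j$ rather than to locate a boundary component containing both $y$ and $p$, and you never establish the nearest-point bound that makes the whole interior-to-boundary reduction quantitative. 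In short, the paper's open-and-closed decomposition is what lets one avoid ever having to ``project'' the path; I would rework your argument along those lines.
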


\begin{proof}[Proof of Claim \ref{claim_sepclaim}]
We suppress $j$ in the notation. Fix $C\in(2N,\infty)$ to be determined later, and let $
U=\bigcup_{p\in \mathcal{I}} B(p,2NH^{-1}(p))$.
To establish the claim, it suffices to prove that $U\setminus V$ is open and closed in $K_T\setminus V$.
Indeed, for $j$ large enough we have (c.f. \eqref{ratio1} and \eqref{ratio2}) that $\{H=H_{\textrm{trig}}\}\subseteq U\setminus V$ and 
$\{H\leq H_{\textrm{th}}\}\subseteq K_T\setminus (U\cup V)$. Thus, once we know that $U\setminus V\subseteq K_T\setminus V$ is open and closed, it follows that the sets $\{H=H_{\textrm{trig}}\}$ and $\{H\leq H_{\textrm{th}}\}$ lie in two different components of $K_T\setminus V$.

Starting with the obvious part, the set $U$ is open since it is a union of open sets. Thus, $U\setminus V=U\cap (K_T\setminus V)$ is open in $K_T\setminus V$. 

Suppose now $x\in \ol{U\setminus V}$ lies in the closure of $U\setminus V\subseteq K_T\setminus V$.
We want to show that $x\in {U\setminus V}$.
There are sequences
$\{x_k\}\subset U$, $\{p_k\}\subset \{H>H_{\textrm{neck}}\}$ such that
$x_k\ra x$ and $x_k\in B(p_k,2NH^{-1}(p_k))$.  Passing to subsequences,
we may assume that $p_k\ra p$.  Then $H(p)\geq H_{\textrm{neck}}$.  If
$H(p)=H_{\textrm{neck}}$, then $x\in B(p,CH^{-1}(p))\subseteq V$
since $C>2N$; a contradiction to the assumption that $x\in\ol{U\setminus V}\subseteq K_T\setminus V$. Hence we have
$H(p)>H_{\textrm{neck}}$.
Let $y\in \D K_T$ be a point nearest $x$.
Then $d(y,x)\leq (N-1)H^{-1}(y)$,  in particular $x\in B(y,2NH^{-1}(y))$. If $H(y)> H_{\textrm{neck}}$, we obtain that $x\in U$ and thus that $x\in U\setminus V$, what we wanted to show.
If $H(y)=H_{\textrm{neck}}$,  we obtain $x\in V$; a contradiction.

Finally, let us rule out the remaining case $H(y)<H_{\textrm{neck}}$. Since $y$ is a boundary point nearest to $x$, we have $d(y,p)\leq 2d(x,p)\leq 4N H^{-1}(p)$.
Thus, by Proposition \ref{lem_quasi_convex}, Proposition \ref{prop_std_sol} and Theorem \ref{prop_selfimprove} there is a $C_1=C_1(\Balpha)<\infty$
such that, for $j$ large enough,  $y$ and $p$ lie in the same
connected component of $B(p,C_1H^{-1}(p))\cap \D K_T$.
Since  $H(y)<H_{\textrm{neck}}<H(p)$, there is a $z\in B(p,C_1H^{-1}(p))\cap \D K_T$ with $H(z)=H_{\textrm{neck}}$. Note that $d(z,x)\leq d(z,p)+d(p,x)\leq (C_1+2N) H^{-1}(p)$, and thus $H(z)d(z,x)\leq (C_1+2N)\tfrac{H(z)}{H(p)}\leq C$, for some $C=C(\Balpha)<\infty$ by Theorem \ref{thm_glob_curv_est}.
This shows $x\in B(z,CH^{-1}(z))\subseteq V$; a contradiction.
\end{proof}

Let $\hat{\mathcal{J}}_j\subseteq\mathcal{J}_j$ be a minimal (and thus finite) subset
such that the union of balls
$\cup_{p\in \hat{\mathcal{J}}_j}B(p,CH^{-1}(p))$ has the separation property.
%We remark that $\mathcal{J}_j$ can of course contain some points that do not look neck-like. However, thanks to the minimality of $\hat{\mathcal{J}}_j$, we have:

\begin{claim}[Strong neck point property]\label{claim_sepnecks}
Given any $\hat\de>0$, for $j$ large enough all points in $\hat{\mathcal{J}}_j$ are strong $\hat{\delta}$-neck points.
\end{claim}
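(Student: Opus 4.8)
\textbf{Plan for the proof of Claim \ref{claim_sepnecks}.}

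The plan is to argue by contradiction: suppose there is $\hat\delta>0$ and a subsequence (still denoted $\K^j$) along which some point $p_j\in\hat{\mathcal J}_j$ fails to be a strong $\hat\delta$-neck point. Parabolically rescale so that $H(p_j,T_j)=1$; then $p_j$ has mean curvature comparable to $H^j_{\textrm{neck}}\to\infty$ in the original scale, so after rescaling the flows $\hat\K^j$ are defined on larger and larger parabolic balls and Theorem \ref{prop_selfimprove} applies. Passing to a further subsequence, $\hat\K^j$ converges smoothly (and globally, in the sense of Definition \ref{rem_smooth_convergence}) to either (a) a $\beta$-uniformly $2$-convex ancient $\al$-Andrews flow, or (b) the evolution of a standard cap preceded by a round cylinder. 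The first thing I would verify is that the limit point $p_\infty$ is not a tip-type point: since $p_j\in\hat{\mathcal J}_j$ lies, by Claim \ref{claim_sepclaim} and minimality of $\hat{\mathcal J}_j$, on a ``separating'' ball that cannot be removed without destroying the separation of $\{H=H^j_{\textrm{trig}}\}$ from $\{H\le H^j_{\textrm{th}}\}$, there must be nearby points (at controlled rescaled distance, using Proposition \ref{lem_quasi_convex} and Theorem \ref{thm_glob_curv_est}) of both much higher and much lower curvature than $p_j$ — more precisely, on either side of $p_j$ along the relevant component there are points whose rescaled mean curvature escapes every bound, because $H^j_{\textrm{trig}}/H^j_{\textrm{neck}}\to\infty$ and $H^j_{\textrm{neck}}/H^j_{\textrm{th}}\to\infty$.

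With that in hand I would rule out each alternative. In case (b), the standard-cap evolution is compact with a single cap, so near the cap tip the curvature is bounded (in the rescaled picture) and there is no way to produce the arbitrarily-high-and-arbitrarily-low curvature points on two sides forced by minimality; away from the tip the standard cap is $\eps$-close to a round cylinder (Proposition \ref{prop_std_sol}(3)), i.e. $p_\infty$ is a strong $\hat\delta$-neck point, contradicting the choice of $p_j$. In case (a), a $\beta$-uniformly $2$-convex ancient $\al$-Andrews flow is convex; if $p_\infty$ is a strong $\eps_1$-neck point for $\eps_1$ small we are done, so assume it is not. Then Proposition \ref{structure_compact} says that either the time slice has bounded diameter (in units of $H^{-1}(p_\infty)$) — impossible, since minimality forces a point of curvature ratio tending to $0$ at controlled distance from $p_\infty$, which by Proposition \ref{lem_quantitative_one_ended} / Remark \ref{remark_one_ended_cpt} and convexity would make the slice a bounded convex body of controlled size yet simultaneously force a point of huge curvature ratio on the other side, a contradiction — or else the two ``bad'' points (the high-curvature side and $p_\infty$) nearly realize the diameter, which is again incompatible with there being, still further along, a low-curvature point. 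The cleanest route is probably: use Claim \ref{claim_sepclaim}'s minimality to extract, on the rescaled limit, a boundary curve through $p_\infty$ along which $H$ takes values that are unboundedly small on one side and unboundedly large on the other (this is exactly the separation between the trigger part and the thick part, transported to the limit as in Claim \ref{claim_two_unbounded}); conclude the limit slice is noncompact with (at least) two ends; hence it contains a line; hence it splits off an $\R$-factor and is a round cylindrical flow; so $p_\infty$ is a strong $\hat\delta$-neck point after all — contradiction.

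So in every case the limit forces $p_\infty$ to be a strong $\hat\delta$-neck point, and smooth convergence then makes $p_j$ a strong $\hat\delta$-neck point for $j$ large, the desired contradiction. The main obstacle I anticipate is making the ``two-sided curvature escape at controlled distance'' step rigorous: one must combine the minimality of $\hat{\mathcal J}_j$ (so that deleting $p_j$'s ball breaks the separation, producing a path from the trigger part to the thick part through points near $p_j$ and avoiding the other balls), the controlled geometry near $p_j$ from Theorem \ref{thm_glob_curv_est} and Proposition \ref{lem_quasi_convex} (so that ``near $p_j$'' really means ``at bounded rescaled distance''), and the divergence of the curvature ratios, to conclude that on the rescaled limit the component through $p_\infty$ is genuinely noncompact with two ends — this is essentially a localized reprise of Claim \ref{claim_two_unbounded}, and the bookkeeping of which balls the path is allowed to meet is the delicate part.
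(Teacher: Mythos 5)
Your setup is the same as the paper's: rescale by $H(p_j,T_j)$, invoke Theorem \ref{prop_selfimprove} to get a subsequential smooth limit which is either (a) a $\beta$-uniformly $2$-convex ancient $\al$-Andrews flow or (b) the evolution of a standard cap preceded by a round cylinder, and then try to contradict minimality of $\hat{\mathcal J}_j$. The gaps appear in how you try to close the argument.

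First, the premise on which you lean — that minimality ``forces a point of curvature ratio tending to $0$ at controlled distance from $p_\infty$'' and a point of huge curvature ratio on the other side — is inconsistent with Theorem \ref{thm_glob_curv_est}: at any bounded rescaled distance from $p_j$ the ratio $H/H(p_j)$ is pinched between two constants depending only on that distance, so the trigger set $\{H=H_{\textrm{trig}}^j\}$ and the thick set $\{H\le H_{\textrm{th}}^j\}$ necessarily \emph{escape} to rescaled infinity; they are not ``at controlled distance.'' Second, and more fundamentally, your ``cleanest route'' asserts that because the separating curve $\gamma_j$ reaches both of these escaping sets, the limit time slice must be two-ended, hence contain a line, hence be a cylinder. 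This does not follow. The limit can perfectly well be a \emph{one-ended} non-compact convex solution — the bowl soliton in case (a), or the standard-cap evolution in case (b) (note the standard cap is noncompact, not compact as you state). In a one-ended limit, a curve through $B(p_\infty,C)$ whose two endpoints both go to rescaled infinity is entirely consistent: the curve goes out the single end, comes back, and goes out again. So no two ends, no line, and no cylinder is forced, and your intended contradiction (``$p_\infty$ is a neck point after all'') is not obtained.

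The way the paper closes the one-ended case is different in kind: rather than proving the limit splits a line, it derives a direct contradiction with minimality. By Proposition \ref{lem_quantitative_one_ended} (or Proposition \ref{prop_std_sol}(3) in case (b)) there is a $\hat\delta$-neck $\hat N$ at a point $q$ at large controlled rescaled distance from $p_\infty$, with the cross-sectional disc of $\hat N$ essentially equal to $K_t\cap S(p,RH^{-1}(p))$. Since both endpoints of $\gamma_j$ escape, $\gamma_j$ must cross the corresponding approximate necks $\hat N^j$ at least twice (once going in, once going out), so one can \emph{short-circuit} $\gamma_j$ inside $\hat N^j$. The modified curve no longer enters $B(p_j,C)$, and a bounded-diameter argument via Theorem \ref{thm_glob_curv_est} shows it still avoids all other balls of the minimal collection. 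This violates the separation property of $\hat{\mathcal J}_j$, which is the desired contradiction. So the conclusion is not that the limit is cylindrical, but that the assumed minimal separating collection cannot exist. I would encourage you to reorganize the argument around this short-circuit: your intuition that the escaping trigger/thick sets should be exploitable is correct, but the right way to exploit them is to reroute $\gamma_j$ through a far-away neck, not to infer two-endedness of the limit.
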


\begin{proof}[Proof of Claim \ref{claim_sepnecks}]
Suppose that for all $j$ one of the points $p_j\in \mathcal{J}_j$
is not a strong $\hat\de$-neck.  By Theorem \ref{prop_selfimprove}, parabolically rescaling to make $H(p_j,T_j)=1$ and passing to a
subsequence $\hat{\K}^j$, we get a limit ancient $(\al,\de)$-flow $\hat{\K}$ with basepoint 
$(p,t)=(0,0)\in \D \hat{\K}$ that is either (a) a $\beta$-uniformly $2$-convex ancient $\al$-solution, or (b) an evolution of the standard surgery cap preceded by a round shrinking cylinder.
In case (a), since the balls separate and the curvature ratios tend to infinity, the limit $\hat{\K}$ is clearly non-compact.
Consider a $\hat{\delta}$-neck $\hat{N}$ at a point $q$ at $t=0$ provided by Proposition \ref{lem_quantitative_one_ended}. Here we chose $q$ far enough away to ensure that $\hat{N}$ and $B(p,2C)$ are disjoint.
This neck $\hat{N}$ is the limit of some $\hat{\delta}$-necks $\hat{N}^j$ in the approximators.
As in the proof of Claim \ref{claim_two_unbounded}, let $\gamma_j$ be a curve in the approximators that connects the trigger and the thick part and passes through $B(p_j,C)$, but avoids all other balls from the minimal collection of separating balls.
Always assuming $j$ is large enough, we can short circuit $\gamma_j$ inside $\hat{N}^j$.
After this modification, $\gamma_j$ misses $B(p_j,C)$. It also still misses all other balls from our minimal collection, unless $\hat{N}^j$ itself intersects a ball $B'=B(p_j',C)$ for some $p_j'\in \mathcal{J}_j\setminus \{p_j\}$.    This however is impossible, because in this case one of the complementary components would have uniformly bounded diameter.
Thus, we get a contradiction with the separation property of $\hat{\mathcal{J}}_j$.
Finally, in case (b), by Proposition
\ref{prop_std_sol} we get a contradiction similar to the one in case (a).
\end{proof}

By Claim \ref{claim_sepnecks}, for large $j$, every $p\in \hat{\mathcal{J}}_j$ is a strong $\de$-neck point.  These necks are disjoint for large $j$, since otherwise again by Claim \ref{claim_sepnecks} two intersecting $\de$-necks would lie in a single $\hat\de$-neck for $\hat{\de}\ll\de$, which is impossible by minimality of $\mathcal{J}_j$.
Thus, we have a minimal collection of disjoint strong $\delta$-necks with the separation property (since $C(\Balpha)>2N$ and $\bar{\delta}$ is small enough); this gives the desired contradiction.
\end{proof}

\subsection{Further properties}\label{sec_properties}
Finally, for convenience of the reader we explain how Corollaries \ref{cor_discarded}, \ref{cor_topo} and Proposition \ref{cor_levelset} are obtained as easy consequences
(in fact, the proof of Proposition \ref{cor_levelset} could have been given right after stating the axioms of an $(\Balpha,\de,\mathbb{H})$-flow).

\begin{proof}[Proof of Corollary \ref{cor_discarded}]
Fix $\eps_1=\eps_1(N)\ll 1$ such that the gluing argument below works (Claim \ref{claim_gluing}).
Fix $\bar{\eps}\ll \min\{\eps_1,\underline{R}_1^{-1},\underline{R}_2^{-1}\}$,
where $\underline{R}_i=\underline{R}(\eps_1/2,\eps_1/2,\alpha,\beta)$ are the constants from
Proposition \ref{structure_compact} and Proposition \ref{lem_quantitative_one_ended}, respectively, and let $\underline{R}_3=\underline{R}_3(\bar{\eps},\tau(\bar{\eps}))$ be the constant from Proposition \ref{prop_std_sol}.
We will prove that the corollary holds for any $\eps\ll \min\{\bar{\eps},\underline{R}_3^{-1}\}$.

Let $C$ be a discarded component, and consider the (possibly empty) set $\mathcal{I}\subseteq \D C$ of $\eps_1$-neck points.

\begin{claim}\label{claim_gluing}
There is a set $N$, $\mathcal{I}\subseteq N\subseteq C$, such that for each $q\in N$ there is a point $p\in\mathcal{I}$ with $d(p,q)\leq 100 H^{-1}(p)$, and such that
either $N=C\cong \bar{D}^{N-1}\times S^1$  or $N\subsetneq C$ and each component of $N$ has the topology of $\bar{D}^{N-1}\times I$, for some open interval $I$.
\end{claim}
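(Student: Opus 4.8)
The plan is to realise $N$ as a union of short solid cylindrical segments, one centred at each point of $\mathcal{I}$, and to read off its topology from the rigidity of round necks.

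First I would record the ambient structure. Since $C$ is a discarded component, $H>H_{\textrm{th}}\geq H_{\textrm{can}}(\eps)$ on $\partial C$, so by Theorem \ref{thm_can_nbd} every point of the compact hypersurface $\partial C$ has a canonical neighbourhood; in particular $\mathcal{I}$ is a closed, hence compact, subset of $\partial C$, and $H\leq H_{\textrm{trig}}$ on $\partial C$, so every solid $\eps_1$-neck occurring inside $C$ has radius bounded below by a fixed multiple of $H_{\textrm{trig}}^{-1}$. For $p\in\mathcal{I}$ put $\nu_p:=\bar B\bigl(p,100 H^{-1}(p)\bigr)\cap C$. Having fixed $\eps_1=\eps_1(N)$ small enough (so that $100/(N-2)<1/\eps_1$), the ball $\bar B(p,100 H^{-1}(p))$ lies well inside the region where, after rescaling by $H(p)$ and recentring at $p$, the time slice is $\eps_1$-close to the solid round cylinder $\bar D^{N-1}\times\R$; consequently $\nu_p$ is diffeomorphic to $\bar D^{N-1}\times[0,1]$ and carries a product structure (cross-section $\times$ axis parameter) which is canonical up to an error $O(\eps_1)$.

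Next I would define $N:=\bigcup_{p\in\mathcal{I}}\operatorname{int}_C\nu_p$, where $\operatorname{int}_C$ denotes the interior relative to $C$; this removes the two end discs of each $\nu_p$ (which are interior to $C$) but keeps its cylindrical side (which lies on $\partial C$), so each $\operatorname{int}_C\nu_p$ is diffeomorphic to $\bar D^{N-1}\times I$ with $I$ an open interval. Then $N$ is relatively open in $C$, $\mathcal{I}\subseteq N$, and every $q\in N$ lies in some $\nu_p$ and hence satisfies $d(p,q)\leq 100 H^{-1}(p)$; this already gives the first assertion of the claim. For the topology, introduce on $\mathcal{I}$ the equivalence relation generated by $p\sim p'$ whenever $\operatorname{int}_C\nu_p\cap\operatorname{int}_C\nu_{p'}\neq\emptyset$. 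By the uniform lower bound on neck radii, points in distinct classes are a definite distance apart, so there are only finitely many classes $\mathcal{C}_1,\dots,\mathcal{C}_m$, and the sets $N_k:=\bigcup_{p\in\mathcal{C}_k}\operatorname{int}_C\nu_p$ are precisely the connected components of $N$. The crux is then the gluing step: if $\operatorname{int}_C\nu_p\cap\operatorname{int}_C\nu_{p'}\neq\emptyset$, then $d(p,p')$ is at most a fixed multiple of $\max(H^{-1}(p),H^{-1}(p'))$, so each of $p,p'$ lies deep inside the other's neck; hence $H(p)$ and $H(p')$ are comparable, the two neck axes make an $O(\eps_1)$ angle, and the product structures on $\nu_p$ and $\nu_{p'}$ agree up to $O(\eps_1)$ on the overlap. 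Iterating along a chain of overlapping segments shows that each $N_k$ is an embedded tube obtained by gluing solid cylinders end-to-end along cross-sectional discs, hence is diffeomorphic either to $\bar D^{N-1}\times I$ with $I$ an open interval, when the chain does not close up, or to the solid torus $\bar D^{N-1}\times S^1$, when it does. For the precise statement that a connected region everywhere $\eps_1$-close to a round cylinder is a tube of one of these two forms, I would either invoke the standard neck-to-tube results (as in \cite{huisken-sinestrari3}, see also \cite{Hamilton_pic}) or prove it by a short compactness argument.

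Finally I would treat the two cases of the conclusion. If some $N_k$ is a solid torus, then it is compact, hence closed in $C$; being also relatively open and nonempty and $C$ being connected, $N_k=C$, so $m=1$ and $N=C\cong\bar D^{N-1}\times S^1$. Otherwise no chain closes up, every component of $N$ is diffeomorphic to $\bar D^{N-1}\times I$ with $I$ an open interval, and $N\neq C$ because $C$ is compact and connected while $N$ is a finite disjoint union of non-compact pieces; thus $N\subsetneq C$. I expect the main obstacle to be the gluing step — verifying that overlapping short necks carry a consistent product structure with controlled error and that a connected chain of them is a genuinely embedded, non-branching tube; this is exactly what forces $\eps_1=\eps_1(N)$ to be chosen, once and for all, small at the outset of the proof of Corollary \ref{cor_discarded}.
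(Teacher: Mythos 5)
Your plan is essentially the paper's: build $N$ as a union of short solid cylindrical cores, one per $\eps_1$-neck point, and read off the topology from the rigidity of overlapping round necks, splitting into the closed-chain (solid torus $=C$) and open-chain (disjoint tubes $\subsetneq C$) cases. The one meaningful technical difference is that the paper first passes to a maximal subcollection $\mathcal{I}'\subseteq\mathcal{I}$ that is $50\min\{H^{-1}(p),H^{-1}(q)\}$-separated and builds $N$ only from cores $C_p$ with $p\in\mathcal{I}'$; this forces the cylindrical pieces to overlap with multiplicity at most two and with substantial overlap, which is exactly what makes the ``gluing short cylinders end-to-end yields a tube'' step routine, whereas your chain/equivalence-class argument over the full (possibly non-discrete) set $\mathcal{I}$ has to work harder or cite a neck-to-tube lemma to rule out branching. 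You should also define the pieces $\nu_p$ intrinsically as the core of the neck at $p$ rather than as $\bar B(p,100H^{-1}(p))\cap C$, since a priori the metric ball could pick up a far-away sheet of $C$ (the paper's $C_p$ is the open core of the neck, so it avoids this); with these two adjustments your argument closes the same way as the paper's.
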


\begin{proof}[Proof of Claim \ref{claim_gluing}]
Let $\mathcal{I}'\subseteq \D C$ be a maximal collection of $\eps_1$-neck points such that for any pair $p,q\in \mathcal{I}'$ the separation between them is at least $50\min\{H^{-1}(p),H^{-1}(q)\}$.
Note that each $p\in \mathcal{I}'$, being an $\eps_1$-neck point for $\eps_1$ small, comes with a cylindrical neighborhood of length much longer than $50H^{-1}(p)$.
Following this cylinder in each direction, we see that there are $0$, $1$ or $2$ neighboring points $p'\in \mathcal{I}'$, where by neighboring point we mean a point of $\mathcal{I}'$ closest to $p$ in a direction of the cylinder and at distance at most $110 H^{-1}(p)$.
Let $C_p$ be the open core of a neck at $p$ going say $75\%$ of the distance towards each neighboring point (and going say $10H^{-1}(p)$ in directions without neighboring point), and let $N=\cup_{p\in\mathcal{I}'}C_p$. 
By construction, the set $N$ is built by gluing together necks with intersection multiplicities at most $2$ and substantial overlap. Since $\eps_1$ is small enough, it follows that the connected components of $N$ must be either $\bar{D}^{N-1}\times S^1$ or $\bar{D}^{N-1}\times I$. Since $C$ is connected, this implies the claim.
\end{proof}

If $N=C$, then $C$ has the topology of $\bar{D}^{N-1}\times S^1$, and we are done.
If $N=\emptyset$ then, always assuming $\eps$ is sufficiently small,
it follows from Theorem \ref{thm_can_nbd}, Proposition \ref{structure_compact} and Proposition \ref{prop_std_sol}, that $C$ is modeled on a compact ancient $\alpha$-solution of controlled geometry, and thus that is has the topology of $\bar{D}^N$.
Assume now $\emptyset\subsetneq N\subsetneq C$, and let $N'\subseteq N$ be a component of maximal diameter.

We first consider the case $\diam N'\leq 10\underline{R} H^{-1}(p)$ for some $\eps_1$-neck point $p\in \mathcal{I}\cap N'$,
where $\underline{R}=\max\{\underline{R}_1,\underline{R}_2\}$.
If the $\eps$-model at $p$ provided by Theorem \ref{thm_can_nbd} were the cylinder, then the connected component of $\mathcal{I}$ containing $p$ would have diameter larger than $\tfrac{1}{\eps} H^{-1}(p)$, contradicting the assumption that
 $\diam N'\ll \tfrac{1}{\eps}H^{-1}(p)$ and $\mathcal{I}\subseteq N$.
If the $\eps$-model at $p$ were a noncompact $\alpha$-solution or a compact $\alpha$-solution with diameter much larger than $\underline{R} H^{-1}(p)$,
then we could apply Proposition \ref{lem_quantitative_one_ended} (see also Remark \ref{remark_one_ended_cpt}) at a point $p'\in \D N'\cap \D C$,
which by definition is not in $\mathcal{I}$ and has curvature comparable to $p$ by Theorem \ref{thm_glob_curv_est},
and would again get, c.f. Claim \ref{claim_gluing}, a much longer $\eps_1$-neck at controlled distance, contradicting the assumption that $N'$ is maximal.
If the $\eps$-model at $p$ were a standard solution, Proposition \ref{prop_std_sol} would give a similar contradiction.
Thus, the $\eps$-model at $p$ must be a compact $\alpha$-solution of controlled size, and thus $C$ has the topology of $\bar{D}^N$.

Let us now consider the remaining case that $\diam N'> 10\underline{R} H^{-1}(p)$ for all $\eps_1$-neck points $p\in \mathcal{I}\cap N'$.
Since for each $q\in N$ there is a point $p\in\mathcal{I}$ with $d(p,q)\leq 100 H^{-1}(p)$ and since $\eps_1$ is small, it follows that
\begin{equation}\label{eqn_diam_est}
\diam N'> 9\underline{R} H^{-1}(p)
\end{equation}
actually holds for all points $p\in N'\cap \D C$.
Select points $p_\pm\in \D N'\cap \D C$ on the boundary circles of the neck $N'$. By definition we have $p_\pm\notin \mathcal{I}$.
We now apply Theorem \ref{thm_can_nbd}, Proposition \ref{lem_quantitative_one_ended} and Proposition \ref{prop_std_sol}
with center $p_\pm$.
If we are not in case (3) of Proposition \ref{prop_std_sol}, then we get caps $C_\pm$ with cylindrical collars of length say $R=2\underline{R}$.
By \eqref{eqn_diam_est} the caps are disjoint, and since $R=2\underline{R}$ their collars intersect $N'$ with substantial overlap. Thus, the conclusion is that $C$ has the topology of a ball $\bar{D}^{N}$.
Finally, if we are in case (3) of Proposition \ref{prop_std_sol}, then we see that actually $\diam  N'\gg \max\{\underline{R}_1,\underline{R}_2,\underline{R_3}\}H^{-1}(p)$ and thus the above argument applies.
\end{proof}

\begin{proof}[Proof of Corollary \ref{cor_topo}]
Let $K_0$ be a smooth compact $2$-convex domain in $\R^N$. By Remark \ref{remark_controlled} it is $\Balpha$-controlled for some $\Balpha$.
We choose constants as in Remark \ref{remark_practice}. By Theorem \ref{thm_main_existence} there exists an $(\Balpha,\delta,\mathbb{H})$-flow $\K=\{K_t\}_{t\in [0,\infty)}$ starting at $K_0$.
Since $\K$ becomes extinct in finite time (Remark \ref{remark_extinct}), by Corollary \ref{cor_discarded} it provides a decomposition of $K_0$ into a finite connected sum of solid tori $\bar{D}^{N-1}\times S^1$, as claimed.
\end{proof}

\begin{proof}[Proof of Proposition \ref{cor_levelset}] It suffices to prove the following claim:
\begin{claim}[c.f. {\cite[Prop. 2.2]{Lauer}}]
For every $r>0$ there is a $j_0<\infty$, such that if $j\geq j_0$, $t$ is a surgery time of $\K^j$, and $B(p,r)\subseteq K_t^{j,-}$, then $B(p,r)\subseteq K_t^{j,+}$.
\end{claim}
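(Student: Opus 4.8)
The plan is to reduce the claim to two facts which hold vacuously once $j$ is large: the surgery necks of $\K^j$ have radius tending to $0$, and every discarded component of $\K^j$ has inradius tending to $0$. Fix $r>0$ and shrink $\bar\delta$ so that $\bar\delta\leq\tfrac1{10\Gamma}$ (in particular, since $\Gamma\geq 10$, $\bar\delta<\tfrac18$). Write $s^j=s_\sharp(\K^j)$, so that by Definition \ref{def_alphadelta}(3) every surgery neck of $\K^j$ has radius in $[\mu^{-1/2}s^j,\mu^{1/2}s^j]$; since these necks have mean curvature $H^j_{\textrm{neck}}$ at their centers (Definition \ref{def_MCF_surgery}(2)) while a strong $\delta$-neck of radius $s$ has mean curvature comparable to $s^{-1}$ there, $s^j$ is comparable to $(H^j_{\textrm{neck}})^{-1}$, which tends to $0$ because $H^j_{\textrm{neck}}\geq H^j_{\textrm{th}}\to\infty$. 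Now fix $p$ with $B(p,r)\subseteq K^{j,-}_t$ at a surgery time $t$. It suffices to prove (i) $B(p,r)\subseteq K^{j,\sharp}_t$ and (ii) the connected component of $K^{j,\sharp}_t$ containing $B(p,r)$ is not discarded; together these give $B(p,r)\subseteq K^{j,+}_t$.

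For (i), recall from Definition \ref{def_surgery}(1) that $K^{j,-}_t\setminus K^{j,\sharp}_t\subseteq\bigcup_i B(x_i,5\Gamma s_i)$, where the $x_i\in\partial K^{j,-}_t$ are the surgery-neck centers and $s_i\in[\mu^{-1/2}s^j,\mu^{1/2}s^j]$. Since $\de_j\leq\bar\delta$, the $\de_j$-neck at $x_i$ is in particular a strong $\bar\delta$-neck, so $K^{j,-}_t\cap B(x_i,\bar\delta^{-1}s_i)$ lies in a solid cylinder of radius $\leq 2s_i$ about a line, and hence contains no ball of radius exceeding $2s_i$. Suppose toward a contradiction that $B(p,r)$ meets some $B(x_i,5\Gamma s_i)$, and choose $z$ in the intersection. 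As $x_i\in\partial K^{j,-}_t$ and $B(p,r)\subseteq K^{j,-}_t$ we have $|z-p|\geq|x_i-p|-|x_i-z|\geq r-5\Gamma s_i$, which for $j$ large (depending on $r$) exceeds $\tfrac14\bar\delta^{-1}s_i$; let $w$ be the point of the segment from $z$ to $p$ with $|w-z|=\tfrac14\bar\delta^{-1}s_i$. Then $B\big(w,\tfrac14\bar\delta^{-1}s_i\big)\subseteq B(p,r)$, since $|w-p|+\tfrac14\bar\delta^{-1}s_i=|z-p|\leq r$, and $B\big(w,\tfrac14\bar\delta^{-1}s_i\big)\subseteq B(x_i,\bar\delta^{-1}s_i)$, since $|w-x_i|\leq\tfrac14\bar\delta^{-1}s_i+5\Gamma s_i\leq\tfrac34\bar\delta^{-1}s_i$ (using $5\Gamma\bar\delta\leq\tfrac12$). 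This exhibits a ball of radius $\tfrac14\bar\delta^{-1}s_i>2s_i$ inside $K^{j,-}_t\cap B(x_i,\bar\delta^{-1}s_i)$ --- a contradiction. Hence for $j$ large no surgery ball of $\K^j$ at time $t$ meets $B(p,r)$, and $B(p,r)\subseteq K^{j,-}_t\setminus\bigcup_i B(x_i,5\Gamma s_i)\subseteq K^{j,\sharp}_t$.

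For (ii), let $C$ be the component of $K^{j,\sharp}_t$ containing $B(p,r)$ and suppose it is discarded, so that $H>H^j_{\textrm{th}}$ on all of $\partial C$ by Definition \ref{def_MCF_surgery}(3). Since $C$ is compact with nonempty interior and smooth boundary, it has a largest inscribed ball $B(c,\rho)\subseteq C$ with $\rho\geq r$, tangent to $\partial C$ at some $q$. As $B(c,\rho)$ lies on the inner side of the smooth mean convex hypersurface $\partial C$, every principal curvature at $q$ is at most $\rho^{-1}$, so $H(q)\leq(N-1)\rho^{-1}$, i.e.\ $\rho\leq(N-1)H(q)^{-1}<(N-1)(H^j_{\textrm{th}})^{-1}$; for $j$ large with $H^j_{\textrm{th}}>(N-1)/r$ this contradicts $\rho\geq r$. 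Thus $C$ survives, $C\subseteq K^{j,+}_t$, and $B(p,r)\subseteq K^{j,+}_t$. With the claim in hand, Proposition \ref{cor_levelset} follows from the argument of \cite[Prop. 2.2]{Lauer}.

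The step I expect to require the most care is (i): the surgery-modified set is contained in balls whose radii tend to $0$, but one must still verify that no such ball can obstruct the fixed-size ball $B(p,r)$, and since the definition only guarantees a fixed neck quality $\bar\delta$ (the $\de_j$ need not tend to $0$) the argument must operate with a solid cylinder of fixed relative width $\approx 2s_i$ rather than an arbitrarily thin one --- hence the device of extending from a point of overlap towards $p$ to manufacture an oversized inscribed ball. Step (ii) is elementary once one observes that a discarded component carries curvature bounded below by $H^j_{\textrm{th}}$ at every boundary point.
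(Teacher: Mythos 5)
Your proof is correct and follows the same two-step structure as the paper's: first, $B(p,r)$ cannot meet any surgery ball because a ball of fixed radius cannot sit inside an arbitrarily thin solid neck (you make the "doesn't fit in a thin and long neck" remark rigorous via the inscribed-ball-in-cylinder estimate); second, the component containing $B(p,r)$ cannot be discarded, since the largest inscribed ball would produce a boundary point with $H\leq (N-1)r^{-1}<H^j_{\textrm{th}}$. This is exactly the paper's argument, just written out in more detail.
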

\begin{proof}
Since $B(p,r)\subseteq K_t^{j,-}$ clearly doesn't fit into a very thin and long neck, c.f. Definition \ref{def_surgery}, we must have $B(p,r)\subseteq K_t^{j,\sharp}$, provided $\bar{\de}<<1$ and $j$ is sufficiently large.
If $B(p,r)$ was contained in a discarded component, then we could find a boundary point with $H\leq (N-1)r^{-1}$. However, by Definition \ref{def_MCF_surgery} the discarded components have $H>H^j_{\textrm{th}}$ everywhere. Thus, $B(p,r)\subseteq K_t^{j,+}$ for $j$ large enough.
\end{proof}
As explained in \cite{Lauer}, the corollary now follows easily.
\end{proof}

\begin{remark}
The same argument applies to more general $(\al,\de)$-flows $\K^j=\{K^j_t\subset\R^N\}_{t\in[0,\infty)}$ with fixed $\Balpha$-controlled initial condition $K_0$,
assuming only that $s_\sharp(\K^j)\to 0$, $\delta\leq\bar{\de}\ll 1$, and that the minimum of the mean curvature of the discarded components goes to infinity.
\end{remark}

\begin{appendix}

\section{One-sided minimization}\label{appendix_min}

\begin{proof}[Detailed proof of Proposition {\ref{lemma_onesidedmin}}]
For convenience of the reader, we now explain the details of the argument of White \cite[Sec. 3]{white_size} and Head \cite[Sec. 5]{Head}.
Let $K_{t_0},K_{t_1}$ and $\bar{B}$ be as in the statement of the proposition. If $t_1$ is a surgery time, we first consider the case that $K_{t_1}$ is interpreted as $K_{t_1}^-$. By standard weak compactness and weak lower semicontinuity of perimeter, there is a set $X\subset \R^N$ that minimizes
the perimeter (in the following $\partial$ stands for the reduced boundary)
\begin{equation}
 \textrm{Per}_{\bar{B}}(X)=\abs{\D X\cap \bar{B}},
\end{equation}
among all sets of locally finite perimeter satisfying
\begin{equation}
K_{t_1}\subseteq X\subseteq K_{t_0} \qquad\textrm{and}\qquad
X\setminus \bar{B}=K_{t_1}\setminus \bar{B}.
\end{equation}
We consider the support of $X$, which is the closed set
\begin{equation}
\spt(X)=\{x\in \R^N\mid |B(x,r)\cap X|>0\;\text{for all}\;r>0\}\,.
\end{equation}
We have to show that $\spt(X)\subseteq K_{t_1}$.
Let $\bar{t}\in[t_0,t_1]$ be the supremum of the times $t\in [t_0,t_1]$
such that $\spt(X)\subseteq K_{t}^-$. Since $\cap_{t<\bar{t}}K_{t}^-=K_{\bar{t}}^-$,
we have $\spt(X)\subseteq K_{\bar{t}}^-$. Thus we are done if $\bar{t} = t_1$,
so we assume towards a contradiction that $\bar{t}<t_1$.

The idea is now to consider suitable mean convex domains $D\subset \R^N$ with $\spt(X)\subseteq D$ (or at least such that $\spt(X)\cap B(x,r)\subseteq D\cap B(x,r)$) and to obtain a contradiction
 with the `maximum principle' at any point $x\in\spt(X)$ where $\D D$ and $\D X$ touch.
To make this rigorous we use the following standard fact from geometric measure theory:
\begin{lemma}
\label{lem_support}  Suppose
$D\subset \R^N$ is a domain with smooth boundary, $x\in \D D\setminus K_{t_1}^-$ 
is a boundary
point at which  the mean curvature of $\D D$ is strictly positive, and that for some 
$r>0$ we have the inclusions
$$
\spt(X)\cap B(x,r)\subseteq D\cap B(x,r)\subseteq \bar{B}\cap B(x,r)\,.
$$
Then $x\not\in\spt(X)$. 
\end{lemma}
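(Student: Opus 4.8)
\emph{The plan} is to argue by contradiction and reduce to the classical strong maximum principle: a barrier $\D D$ with strictly positive mean curvature cannot be touched from its concave side by the reduced boundary of a one-sided perimeter minimizer. So suppose $x\in\spt(X)$. First I would localize. Since $K_{t_1}^-$ is closed and $x\notin K_{t_1}^-$, fix $\rho\in(0,r)$ with $\bar B(x,\rho)\cap K_{t_1}^-=\emptyset$ and with $\D D$ smooth and its mean curvature $\geq H_0>0$ throughout $B(x,\rho)$. The role of the hypotheses is that $X$ is then a \emph{one-sided perimeter minimizer} in $B(x,\rho)$: given any $X'$ of locally finite perimeter with $X'\subseteq X$ and $X'=X$ outside a compact subset of $B(x,\rho)$, the set $X'$ is an admissible competitor for the minimization defining $X$ — indeed $K_{t_1}\subseteq X'$ because the modification is disjoint from $K_{t_1}^-$, one has $X'\subseteq X\subseteq K_{t_0}$, and $X'=K_{t_1}$ outside $\bar B$ because $X\cap B(x,\rho)\subseteq\spt(X)\cap B(x,r)\subseteq D\cap B(x,r)\subseteq\bar B$ — so $\abs{\D X\cap B(x,\rho)}\leq\abs{\D X'\cap B(x,\rho)}$.

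Next I would invoke the interior regularity theory for one-sided perimeter minimizers (volume density estimates, Allard's theorem, and Federer dimension reduction, exactly as for area-minimizing boundaries): $\spt(X)\cap B(x,\rho)$ is the closure of a smooth embedded hypersurface $M=\D X\cap B(x,\rho)$ away from a closed singular set of Hausdorff dimension $\leq N-8$, which is empty for $N\leq 7$, and in particular for the main case $N=3$. Moreover, testing the minimality with inward normal deformations compactly supported near a regular point gives the first-variation inequality that the scalar mean curvature of $M$ with respect to the outward unit normal $\nu$ of $X$ is $\leq 0$ along $M$.

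Now assume $x\in M$. Since $\spt(X)\subseteq D$ near $x$, the hypersurface $M$ is tangent to $\D D$ at $x$ and lies on the $D$-side. Writing $M$ and $\D D$ as graphs $x_N=u_M(x')$, $x_N=u_D(x')$ over the common tangent plane at $x$, with $X$ on the side $\{x_N<u_M(x')\}$ and $D$ on $\{x_N<u_D(x')\}$, the inclusion $X\subseteq D$ forces $u_M\leq u_D$ with equality and vanishing gradient at $x$; hence $\hess u_M(x)\leq\hess u_D(x)$, and taking traces, the mean curvature of $M$ at $x$ with respect to $\nu$ is $\geq$ that of $\D D$, i.e. $\geq H_0>0$, contradicting the first-variation inequality. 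It remains only to exclude $x$ lying in the singular set, which is relevant solely for $N\geq 8$: here one runs the standard dimension-reduction argument, noting that a tangent cone of $X$ at $x$ is a one-sided minimizing cone whose support lies in the tangent half-space of $D$ at $x$ with vertex on the bounding hyperplane, so applying the touching-point argument above with this flat barrier (iterating the blow-up if the touching occurs only at the vertex) forces the cone to be the half-space, i.e. $x$ is in fact a regular point — again a contradiction. Either way $x\notin\spt(X)$.

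I expect the real work to be packaging the regularity theory for one-sided minimizers and the mean-curvature comparison at the touching point correctly; the geometric heart is just the classical strong maximum principle, and its only genuinely delicate point — the behaviour at a singular touching point — does not even arise in low dimensions, in particular not in the headline case $N=3$.
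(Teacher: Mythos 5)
Your proposal is a genuinely different route from the paper's, and while the strategy works in principle, it loads in substantially more machinery than is needed and leaves two points that would require real work to close. The paper avoids interior regularity theory entirely: having noted, as you do, that the open set $B(x,r)\setminus D$ lies in $\R^N\setminus\spt(X)$ so that $x$ belongs to the support of the perimeter measure, it slightly deforms $\D D$ to a strictly mean-convex hypersurface with strict one-sided exterior contact with $D$ at $x$, extends this to a foliation of a neighborhood of $x$ by strictly mean-convex leaves, and applies the divergence theorem to the unit normal field of the foliation (a calibration) to exhibit a compactly supported inward competitor of strictly smaller perimeter. That argument lives entirely at the level of the Gauss--Green theorem for sets of finite perimeter; it needs none of Allard's theorem, De Giorgi regularity, or Federer dimension reduction.

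If you do want to run your regularity-theoretic route, two points need tightening. First, the regular part of an inward one-sided perimeter minimizer is in general only $C^{1,\alpha}$, not smooth: the first variation yields the inequality $H\le 0$ rather than an elliptic equation, so the pointwise Hessian comparison ``$\hess u_M(x)\le\hess u_D(x)$'' you invoke at the touching point is not literally available. You would have to phrase the comparison in the viscosity or distributional sense, or first upgrade regularity near the contact set with $\D D$ via obstacle-problem theory (noting that $X$ also minimizes under the extra constraint $X'\subseteq D$ near $x$). Second, the dimension-reduction argument for a singular touching point in $N\ge 8$ is the genuinely delicate part of this route: a tangent cone of $X$ at $x$ contained in the tangent half-space of $D$ at $x$ and touching its boundary hyperplane only at the vertex is not obviously excluded by ``iterating the blow-up,'' and one needs a maximum principle for stationary or minimizing cones (in the spirit of Ilmanen, Solomon--White, or Simon) to close the loop. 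The paper's calibration argument sidesteps both of these issues, which is precisely why White, Head, and the present paper run it that way.
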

\begin{proof}
If $x\in \spt(X)$, then since the open set $B(x,r)\setminus D$ lies in 
$\R^N\setminus \spt(X)$, it follows that $x$ belongs to the support of the
perimeter measure $\textrm{Per}_{\bar{B}}(X)$.  Using a slight deformation of $\D D$, one can 
construct a foliation of a neighborhood of $x$ by mean convex hypersurfaces,
such that one of the leaves has strict one-sided exterior contact with $D$ at
$x$.  
Using a strictly area
decreasing deformation one gets a contradiction to the fact
that $X$ is minimizing.  
\end{proof} 

For every $x\in \D \bar{B}\setminus K_{t_1}^-$, we may apply Lemma 
\ref{lem_support} with 
$D=\bar{B}$ to conclude that $x\not\in\spt(X)$. Similarly, for
every $x\in \D K_{\bar t}^-\setminus\D K_{t_1}^-$, we may apply the lemma with $D=K_{\bar{t}}^-$, to conclude that $x\not\in\spt(X)$. If $\bar{t}$ is not a surgery time, this contradicts the maximality of $\bar{t}<t_1$.

If $\bar t$ is a surgery time, consider the post-surgery domain $K_{\bar t}^\sharp\subseteq K_{\bar t}^-$ (see Definition \ref{def_alphadelta}).
Suppose $p$ is the center of a surgery neck at time $\bar t$ and scale $s$ such that
$B(p,5\Gamma s)\cap (K_{\bar t}^-\setminus K_{\bar t}^\sharp)\cap\bar{B}\neq \emptyset$.
Suppose $\spt(X)\cap B(p,10s)\neq\emptyset$.
Since 
$\spt(X)\subseteq K_{\bar t}^-$, when $\Gamma$ is sufficiently large, 
we may find a mean convex surface (e.g. a slight deformation of the catenoid) that has $1$-sided contact with
$\spt(X)$ at some point 
$x\in B(p,\Gamma s)\cap (K_{\bar t}^-\setminus K_{\bar t}^\sharp)$.
By assumption we have $x\in \bar{B}$, and as argued above, we have
$x\in B$.  Then Lemma \ref{lem_support} gives a contradiction.  Therefore
$\spt(X)\cap B(p,10s)=\emptyset$.
By one-sided contact with halfspaces (Lemma \ref{lem_support}) and almost convexity of the caps, we get that $K_{\bar t}^\sharp$ must be contained in thin neighborhoods of the caps.
Since these thin neighborhoods are foliated by mean convex sets, namely the level sets of the distance function, we conclude that $\spt(X)\subseteq K_{\bar t}^\sharp$, again by Lemma \ref{lem_support}.  It follows
from the minimality of $X$ that 
 $\spt(X)\cap C=\emptyset$ for any connected component $C$ of $K_{\bar t}^\sharp$
which is thrown away to form $K_{\bar t}^+$.  Thus $\spt(X)\subseteq K_{\bar t}^+$.
Finally, since $\bar t<t_1$, by Lemma \ref{lem_support}
we get that $\spt(X)\cap \D K_{\bar t}^+=\emptyset$.
Since $K_t\ra K_{\bar t}^+$ as $t\ra \bar t+$, 
it follows   that $\spt(X)\subseteq K_t^-$ for some $t>\bar t$.  This
contradicts the definition of $\bar t$, and 
we are done.

A similar argument applies if $K_{t_1}$ is interpreted as $K_{t_1}^\sharp$ or $K_{t_1}^+$, except that
in the case that $\bar{t}=t_1$, one repeats the above argument once more to see that
$\spt(X)\subseteq K_{{t_1}}^\sharp$ respectively $\spt(X)\subseteq K_{{t_1}}^+$.
\end{proof}

\begin{remark}[Elementary argument] We now sketch an alternative and more elementary argument.
By \cite[Rmk 2.6]{HK} it is enough to construct a vector field $X$ with $\textrm{div} X\geq 0$, $\abs{X}\leq 1$ and $X$ equal to the outward unit normal on $\partial  K_{t}$.
In the case without surgeries and discarded components, such a vector field is given by the outward unit normal of the mean convex foliation.
In our case, the vector field given by the outward unit normal might not be defined everywhere, so we have to extend it.
We can take care of the discarded pieces by defining $X$ as the negative gradient of the distance to the boundary.\footnote{Since the boundary is mean convex, $\textrm{div} X\geq 0$ in the barrier sense, and thus also in the distributional sense, which is good enough for the calibration argument.}
We can take care of the surgeries, by interpolating between the vector field of the cylinder and the vector field of the cap, using a slowly varying cutoff function along the axis of the cylinder. Due to the formula
\begin{multline}
 \textrm{div}(\varphi_1 X_1+\varphi_2 X_2)\\
=\varphi_1\textrm{div}(X_1)+\varphi_2\textrm{div}(X_2)+\langle\nabla\varphi_1,X_1\rangle+\langle\nabla\varphi_2,X_2\rangle,
\end{multline}
we can do this preserving positive divergence, provided the cap separation parameter $\Gamma$ is large enough.
\end{remark}

%\section{Convex slab stuff}

%\begin{lemma}
%Suppose $C\subset \R^N$ is a closed convex set with nonempty interior.
%Then the boundary $\D C$ is connected unless $C$ is a slab, i.e.
%a product of a hyperplane
%with an interval.
%\end{lemma}
%\begin{proof}
%Assume $\D C$ is disconnected.
%Then $C$ is noncompact, since otherwise $\D C\simeq S^{N-1}$.   Suppose $\ga\subset C$ is a line segment joining distinct connected components of $\D C$.
%Without loss of generality we may assume that $0$ is an interior point of $\ga$.
%Then $C$ is star-shaped, and the set
%$W=\{v\in S^{N-1}\mid [0,\infty)v\subset C\}$ is a closed $\pi$-convex set, and the complement $W^c=S^{N-1}\setminus W$ is disconnected.

%Now suppose that $N=2$.  Then $W=S^1\setminus W$ is a closed, $\pi$-convex set with disconnected complement $W$, and must therefore consist of a single pair
%of antipodal points.

%Returning to general case, by considering planes
%$P\subset \R^N$ which contain $\ga$, and applying the $N=2$ case, obtain
%$N-2$ pairs $\{(v_1^\pm,\ldots,v_{N-2}^\pm\}\subset W$
%such that $v_i^-=-v_i^+$, whose linear span is a hyperplane $H\subset \R^N$.
%Then $H\subset C$, and $C$ is a union of hyperplanes parallel to $H$.
%Since $C$ has more than one boundary component it must be a slab
%$H\times I$.
%\end{proof}

\end{appendix}

\bibliography{surgery}

\bibliographystyle{alpha}

\vspace{10mm}
{\sc Courant Institute of Mathematical Sciences, New York University, 251 Mercer Street, New York, NY 10012, USA}

\emph{E-mail:} robert.haslhofer@cims.nyu.edu, bkleiner@cims.nyu.edu

\end{document}